\documentclass[11pt]{amsart}

\oddsidemargin = -0.1cm \evensidemargin = -0.1cm \textwidth =6.4in
\textheight =7.9in
\usepackage{palatino}
\usepackage[all]{xy}
\usepackage{amssymb}
\usepackage{latexsym}
\usepackage{amscd}
\usepackage{color}

 \textwidth 16.0cm

\textheight 21.0 cm

\evensidemargin 0.0cm

\oddsidemargin 0.0cm

\makeatletter \@addtoreset{equation}{section}

\makeatother \makeatletter

\makeatletter \@addtoreset{equation}{section}

\makeatother \makeatletter

\def \<{\langle}
\def \>{\rangle}

\def \a{\alpha }

\newtheorem{theorem}{Theorem}[section]
\newtheorem{corollary}{Corollary}[section]
\newtheorem{lemma}{Lemma}[section]
\newtheorem{conjecture}{Conjecture}[section]

\newtheorem{remark}{Remark}[section]

\newtheorem{proposition}{Proposition}[section]

\newcommand{\cV}{\mathcal V}
\newcommand{\scV}{\mathcal {SV}}

\newcommand{\bea}{\begin{eqnarray}}
\newcommand{\eea}{\end{eqnarray}}
\newcommand{\be}{\begin {equation}}
\newcommand{\ee}{\end{equation}}

\newcommand{\nn}{\nonumber \\}
\newcommand{\ver}{L(-\tfrac{4}{3}\Lambda_0)}

\newcommand{\logmin}{\mathcal{W}(p,p')}
\newcommand{\slogmin}{\mathcal{SW}(p,p')}

\newcommand{\Z}{\Bbb Z}

\newcommand{\Zp}{{\Bbb Z}_{>0} }

\newcommand{\N}{{\Bbb Z}_{\ge 0} }
\newcommand{\C}{\Bbb C}

\newcommand{\la}{\langle}
\newcommand{\ra}{\rangle}

\newcommand{\triplet}{\mathcal{W}(p)}
\newcommand{\striplet}{\mathcal{SW}(m)}
\newcommand{\hf}{\mbox{$\frac{1}{2}$}}
\newcommand{\thf}{\mbox{$\frac{3}{2}$}}

\begin{document}

\title[Lattice construction of logarithmic modules]{Lattice construction of logarithmic modules for certain vertex algebras}

\date{}

\author{Dra\v zen Adamovi\' c}

\address{Department of Mathematics, University of Zagreb, Croatia}
\email{adamovic@math.hr}

\author{Antun Milas}

\address{Department of Mathematics and Statistics,
University at Albany (SUNY),  \\ Albany, NY 12222}
\email{amilas@math.albany.edu}

 \subjclass[2000]{
Primary 17B69, Secondary 17B67, 17B68, 81R10}

\keywords{vertex  algebras, W-algebras, lattice vertex algebras;  logarithmic representations; logarithmic intertwining operators; triplet vertex algebras; quantum groups; affine Lie  algebras}

 \markboth{Dra\v zen Adamovi\' c and Antun Milas} {Vertex
superalgebras}
\bibliographystyle{amsalpha}

\begin{abstract}
A general method for constructing logarithmic modules in vertex
operator algebra theory is presented. By utilizing this approach, we
give explicit vertex operator construction of certain indecomposable
and logarithmic modules for the triplet vertex algebra $\triplet$
and for other subalgebras of lattice vertex algebras and their $N=1$
super extensions.  We analyze in detail indecomposable modules
obtained in this way, giving further evidence for the conjectural equivalence
between the category of $\triplet$-modules and the category of modules for the restricted quantum group
$\overline{\mathcal{U}}_q(sl_2)$, $q=e^{\pi i /p}$. We also construct
logarithmic representations for a certain affine vertex operator
algebra at admissible level realized in \cite{A-2005}. In this way
we prove the existence of the logarithmic representations predicted
in \cite{G}. Our approach enlightens related logarithmic
intertwining operators among indecomposable modules, which we also
construct in the paper.

\end{abstract}

 \maketitle
\tableofcontents

\section{ Introduction}

An important problem in vertex algebra theory is to describe
representations of irrational $C_2$-cofinite vertex superalgebras,
in particular the triplet vertex algebras $\triplet$, $p \geq 2$.
The triplet vertex algebra $\triplet$ is interesting for several
reasons: for instance, its representation theory is expected to be
identical to the one of $\overline{\mathcal{U}}_q(sl_2)$, where $q$
is root of unity (see \cite{FGST-triplet}, \cite{FGST-triplet2}). In
fact, it was the triplet algebra (and especially early contributions
\cite{Flohr-mod} and \cite{GaKa}) that gave motivation for studying
general $C_2$-cofinite vertex algebras (see \cite{Hu}, \cite{Miy},
etc.).

The present work is a continuation of the series of papers
\cite{AdM-triplet}-\cite{AdM-ptraces} in which we studied the triplet
and super-triplet vertex algebras $\triplet$ and $\striplet$  and
their representation theory. By using the theory of Zhu's algebra
and modular invariance we proved that these vertex algebras admit
logarithmic representations, i.e., the representation on which
the Virasoro  generator $L(0)$ does not act semisimply.  On the other hand, irreducible and
ordinary modules for $\triplet$ and $\striplet$ can be constructed
explicitly by using lattice vertex algebras and their modules. This
raises a question whether logarithmic modules can be also
constructed via lattice vertex algebras, even though lattice vertex
algebras do not admit  logarithmic modules. In contrast, for the
Heisenberg and singlet vertex algebras (see \cite{AdM-2007} and
\cite{M2}) logarithmic modules are easily constructed by deforming
the action of the zero modes of the Heisenberg algebra. Thus, we
believe it is important to present explicit construction of
logarithmic representations for the triplet.

In this paper we present a general construction of logarithmic modules for vertex (super)algebras obtained as
(intersections of) kernels of "screening" operators (subject to some
additional properties). Vertex algebras obtained in this fashion are
also known as $\mathcal{W}$-algebras, but we stress that
$\mathcal{W}$-algebras considered in this paper do not arise from
the usual quantum reduction as in \cite{Ar}, \cite{DK}, \cite{KW2},
etc.

Although the idea of deforming CFTs with the use of screening
operators has been considered in the literature on LCFT (especially
\cite{FFHST}), our construction is new and covers models not
previously considered. For instance, we are able to explicitly
construct  certain logarithmic representations for $\triplet$ for every $p$,
while in the literature the only explicit construction we are aware
of is for $p=2$
 (cf. \cite{Abe},\cite{GaKa}).
Moreover, methods used to construct  logarithmic modules are very useful to
analyze general indecomposable $\triplet$-modules (see Section 5 and 6).

We also apply our method to other vertex superalgebras of interest,
such as the supertriplet $\striplet$ introduced in
\cite{AdM-striplet}. Moreover, we construct certain logarithmic
representations (cf. Theorem \ref{log-modules-triplet} ) of the
vertex algebras $\mathcal{W}(p,p')$, which are extensions of the
$(p,p')$-minimal models for the Virasoro algebra introduced in
\cite{FGST-log}. We also present a construction of a new family of
$N=1$ vertex operator superalgebras, called $\mathcal{SW}(p,p')$,
which are extensions of the $N=1$ super minimal $(p,p')$-models,
with $(p,\frac{p-p'}{2})=1$. This extension combines into the
following diagram
$$V_{1/2}(c_{p,p'},0) \subset \mathcal{SW}(p,p') \subset V_L \otimes F,$$
where $V_{1/2}(c_{p,p'},0) $ is the (universal) Neveu-Schwarz vertex
superalgebra of central charge $c_{p,p'}$, $F$ is the free fermion
vertex operator and $V_L$ is the lattice vertex superalgebra based
on the integral lattice $L=\mathbb{Z} \alpha$, $ \langle
\alpha,\alpha \rangle=pp'$.

The present work also contributes to better understanding of
logarithmic representations of a certain affine vertex algebra on an
admissible level. Previously, the first author \cite{A-2005} had
obtain an explicit realization of the affine vertex operator algebra
$\ver$ (admissible module for $A_1^{(1)}$ \cite{KW}, \cite{KW2}) and
related modules. In Section 8  we shall construct certain
logarithmic $\ver$--representations. These logarithmic
representations are important for description of extensions of
certain weak modules and analysis of fusion product of admissible
representations. In particular, we present the free field
realization of the logarithmic module ${\mathcal R}_{-1/3}$ which
appeared in \cite{G} and \cite{Ga}. It is also interesting that
logarithmic representations for affine vertex operator algebras
which we have constructed are not $\N$--graded and therefore they
cannot be detected by using theory of Zhu's algebras.

It is well known now that logarithmic modules in vertex algebra
theory are closely related to logarithmic intertwining operators
  \cite{Hu}, \cite{HLZ}, \cite{M1}. As in the ordinary case, logarithmic  intertwining
operators are hard to construct explicitly. In the last part of the
paper we show that already the module map for logarithmic modules
considered earlier give rise to logarithmic intertwining operators.
This is obtained by combining the theory of simple currents
\cite{Li-sc} with our method (cf. Theorems \ref{log-int-1} and
\ref{log-int-2}).

\section{Construction of logarithmic modules}

\label{konstrukcija}

In this part we assume some familiarity with vertex (super)algebras
as in \cite{Kac} and \cite{LL}.

Assume that $(V,Y,{\bf 1}, \omega)$ is a vertex operator
superalgebra, with the parity decomposition $V=V_0 \oplus V_1$,
together with the vertex operator map $Y( \cdot, x)$,
$Y(u,x)=\sum_{n \in \mathbb{Z}} u_n x^{-n-1}$, such that
$$Y(\omega,x)=\sum_{m \in \mathbb{Z}} L(m)x^{-m-2},$$
where the operators $L(m), m \in \mathbb{Z}$ close the Virasoro algebra.

Let $v \in V$
be an even vector such that:
\bea
&& [v_n, v_m] = 0 \quad \forall n,m \in {\Z}, \label{rel-c-1} \\
&& L(n) v = \delta_{n,0} v \quad \forall \ n \in {\N},
\label{rel-c-2}
\eea
so that $v$ is of conformal weight one.

Define
\be \label{delta-main}
\Delta(v,x) = x^{v_0} \exp \left( \sum_{n=1} ^{\infty}
\frac{v_n}{-n}(-x)^{-n} \right).
\ee

If $v_0$ acts semisimply on $V$ then the expression $x^{v_0} w$, where $w$ is an eigenvector
for $v_0$ corresponding to the eigenvalue $\lambda$, is defined as $x^{\lambda} w$.
But (\ref{delta-main}) is ambiguous if $v_0$ does not act
semisimply on $V$ (however, see also Chapter 9).

The following easy but fundamental observation will be of great use in the paper.


\begin{theorem}
\label{gen-const-log} Assume that $V$ is a vertex operator
superalgebra and that $v \in V$ is an even vector which  satisfies
conditions (\ref{rel-c-1}) and (\ref{rel-c-2}). Let $\overline{V}$
be the vertex subalgebra of $V$ such that $\overline{V} \subseteq
\mbox{Ker}_V v_0$.

Assume that $(M,Y_M)$ is
a weak $V$--module (resp. $\sigma$--twisted weak $V$--module).
Define the pair $(\widetilde{M}, \widetilde{Y}_{\widetilde{M} })$
such that
$$\widetilde{M} = M \quad \mbox{as a vector space}, $$
$$ \widetilde{Y}_{\widetilde{M} } (a, x) = Y_{M} (
\Delta(v,x) a, x) \quad \mbox{for} \ a \in \overline{V}. $$
 Then  $(\widetilde{M},
\widetilde{Y}_{\widetilde{M} } )$ is a weak $
\overline{V}$--module (resp. $\sigma$--twisted weak
$\overline{V}$--module).
\end{theorem}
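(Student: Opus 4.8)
The plan is to recognize that this is a "change of conformal vector / shift by a weight-one field" construction, and the statement that $\widetilde{Y}$ defines a weak $\overline{V}$-module amounts to the operator $\Delta(v,x)$ being exactly the intertwiner that conjugates the module structure. The cleanest route is to invoke Li's $\Delta$-operator machinery (the "delta operator" of H.-S. Li), of which $\Delta(v,x)$ in \eqref{delta-main} is the standard incarnation for a weight-one field $v$ satisfying \eqref{rel-c-1} and \eqref{rel-c-2}. Accordingly, I would structure the proof around verifying the two module axioms — the vacuum property and the Jacobi identity (or, equivalently, weak commutativity and weak associativity) — for the pair $(\widetilde{M},\widetilde{Y}_{\widetilde{M}})$ restricted to $a\in\overline{V}$.

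\smallskip

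First I would record the key algebraic properties of $\Delta(v,x)$ that follow from the hypotheses. Conditions \eqref{rel-c-1} and \eqref{rel-c-2} guarantee that $v$ behaves like an abelian current of conformal weight one, so that the modes $v_n$ for $n\geq 1$ commute among themselves and with $v_0$; this is precisely what makes the exponential factor in \eqref{delta-main} well-behaved and what underlies Li's computation. The two properties I would isolate are: (i) the conjugation formula expressing $\Delta(v,x_1)\,Y(a,x_2)\,\Delta(v,x_1)^{-1}$ in terms of $Y(\Delta(v,x_1-x_2)a,x_2)$-type expressions, and (ii) the fact that $\Delta(v,x){\bf 1}={\bf 1}$, which yields the vacuum axiom for $\widetilde{Y}$. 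I would note that because $\overline{V}\subseteq\mathrm{Ker}_V\,v_0$, for $a\in\overline{V}$ the potentially problematic factor $x^{v_0}$ acts trivially when $a$ is inserted, so $\Delta(v,x)a$ is an honest element of $V[[x,x^{-1}]]$ with no formal-logarithm ambiguity; this is the role of the restriction to $\overline{V}$ and I would flag it explicitly, since it is what makes $\widetilde{Y}(a,x)$ well-defined.

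\smallskip

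The heart of the argument is the Jacobi identity for $\widetilde{Y}_{\widetilde{M}}$. I would verify it by substituting $\widetilde{Y}_{\widetilde{M}}(a,x)=Y_M(\Delta(v,x)a,x)$ into the Jacobi identity and using the known $\Delta$-conjugation identity together with the Jacobi identity that already holds for the $V$-module $(M,Y_M)$. The computation is the standard one from Li's work: the factors $\Delta(v,x)$ reassemble correctly across the three terms of the Jacobi identity precisely because of \eqref{rel-c-1}, and the delta-function substitutions are compatible with the exponential prefactors. In the $\sigma$-twisted case the same manipulation goes through with the twisted Jacobi identity, since $v$ is even and fixed by $\sigma$ (being part of the untwisted structure), so the twisting affects only the $Y_M$ side and not the conjugation formula for $\Delta(v,x)$.

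\smallskip

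The main obstacle I anticipate is bookkeeping rather than conceptual: one must check carefully that $\Delta(v,x)$ maps $\overline{V}$ into $\overline{V}[[x,x^{-1}]]$ (or at least that the relevant expressions stay inside a space on which $Y_M$ is defined), and that the non-semisimplicity of $v_0$ on $M$ — which is the whole point, since it is what produces the logarithmic action of $L(0)$ — does not obstruct the formal calculations on the $\overline{V}$ side. Since $\overline{V}$ lies in the kernel of $v_0$, the ambiguity flagged after \eqref{delta-main} is confined to the module $M$ and never enters the definition of $\widetilde{Y}_{\widetilde{M}}(a,x)$ for $a\in\overline{V}$; I would make this separation precise at the outset, as it is the delicate point that distinguishes this construction from a naive application of the $\Delta$-operator to all of $V$.
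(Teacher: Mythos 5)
Your proposal is correct and follows essentially the same route as the paper, which simply cites Li's $\Delta$-operator construction (and Dong--Li--Mason) and observes that the only new point is that the restriction to $\overline{V}\subseteq\mathrm{Ker}_V\,v_0$ makes $x^{v_0}$ act trivially, removing the ambiguity in $\Delta(v,x)a$. You spell out more of the verification (vacuum axiom, conjugation formula, Jacobi identity, twisted case) than the paper does, but the key observation is the same.
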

\begin{proof} The proof is essentially in \cite{Li-sc}  and
\cite{DLM-1996}. There the same statement was proven in the case
when $v_0$ is semisimple and without any restriction.  But the same
conclusion obviously holds for $\overline{V}$, because $v_0$ acts
semisimply on $\overline{V}$.
\end{proof}

\begin{remark} In the   case when $v_n$, $n \in {\Z}$, satisfy
the commutation relations for the Heisenberg algebra and $v_0$ acts
semisimply on ${V}$ with integral weights, $\Delta(v,x)$ can be used
for construction of simple current modules (for details see
\cite{Li-sc}, \cite{DLM-1996}).
\end{remark}

Among all (weak) $V$-modules, logarithmic modules are of great interest in this work.
The next result gives a general method for their construction.

\begin{theorem} \label{main}
Assume that $(M,Y_M)$ is a $V$--module such that $L(0)$ acts
semisimply on $M$. Then $(\widetilde{M},
\widetilde{Y}_{\widetilde{M} })$ is logarithmic
$\overline{V}$--module if and only if $v_0$ does not act
semisimply on $M$.
\end{theorem}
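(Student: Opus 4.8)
The plan is to determine the new Virasoro field on $\widetilde{M}$ explicitly and to read off the new grading operator. Since $\widetilde{Y}_{\widetilde{M}}(a,x)=Y_M(\Delta(v,x)a,x)$ and $\widetilde{M}$ is a genuine $\overline{V}$--module by Theorem \ref{gen-const-log}, the Virasoro action on $\widetilde{M}$ is governed by $\widetilde{Y}_{\widetilde{M}}(\omega,x)=Y_M(\Delta(v,x)\omega,x)$, provided the conformal vector $\omega$ lies in $\overline{V}$. I would first check exactly this: by skew--symmetry together with (\ref{rel-c-2}) one computes $v_0\omega=0$, so $\omega\in\mathrm{Ker}_V v_0$ and may be taken in $\overline{V}$. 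The whole proof then reduces to expanding $\Delta(v,x)\omega$.

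For that computation I would use the conditions on $v$ to kill almost all terms. Because $v$ is primary of conformal weight one, skew--symmetry gives $v_n\omega=0$ for $n\ge 2$ and $v_1\omega=v$; moreover (\ref{rel-c-1}) forces $v_0v=0$ and $v_1v=\langle v,v\rangle\,\mathbf{1}$, a scalar since $V_0=\mathbb{C}\mathbf{1}$, while $v_n\mathbf{1}=0$ for $n\ge 0$. Expanding the exponential in (\ref{delta-main}) and applying $x^{v_0}$ then collapses everything to
\[
\Delta(v,x)\omega=\omega+v\,x^{-1}+\tfrac12\langle v,v\rangle\,\mathbf{1}\,x^{-2}.
\]
Substituting into $\widetilde{Y}_{\widetilde{M}}(\omega,x)=\sum_m\widetilde{L}(m)x^{-m-2}$ and comparing coefficients yields $\widetilde{L}(m)=L(m)+v_m+\tfrac12\langle v,v\rangle\delta_{m,0}$; in particular the new grading operator is $\widetilde{L}(0)=L(0)+v_0+\tfrac12\langle v,v\rangle$.

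The statement is now pure linear algebra on the single space $M=\widetilde{M}$. Since $v$ is primary of weight one, $[L(0),v_n]=-n\,v_n$, so $[L(0),v_0]=0$; thus $v_0$ preserves each $L(0)$--eigenspace $M_\lambda$, and by hypothesis $M=\bigoplus_\lambda M_\lambda$. On $M_\lambda$ we have $\widetilde{L}(0)=(\lambda+\tfrac12\langle v,v\rangle)\,\mathrm{Id}+v_0$, which is semisimple precisely when $v_0|_{M_\lambda}$ is. Since $v_0=\bigoplus_\lambda v_0|_{M_\lambda}$, the operator $\widetilde{L}(0)$ is semisimple if and only if $v_0$ is; equivalently, $\widetilde{M}$ is logarithmic if and only if $v_0$ does not act semisimply. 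The scalar $\tfrac12\langle v,v\rangle$ only shifts eigenvalues and plays no role.

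I expect the main obstacle to be the second paragraph: verifying $\omega\in\overline{V}$ and pinning down the expansion of $\Delta(v,x)\omega$, i.e.\ that $\Delta$--conjugation shifts the Virasoro zero mode by exactly $v_0$ up to a scalar. Once the formula $\widetilde{L}(0)=L(0)+v_0+\mathrm{const}$ is in hand, the equivalence is immediate, and it is precisely the hypothesis that $L(0)$ act semisimply that makes both implications work: it guarantees the simultaneous block decomposition along which semisimplicity of $\widetilde{L}(0)$ and of $v_0$ coincide.
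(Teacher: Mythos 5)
Your proof is correct and follows essentially the same route as the paper: expand $\Delta(v,x)\omega$, read off $\widetilde{L}(0)=L(0)+v_0$ (up to a scalar), and compare semisimplicity of $\widetilde{L}(0)$ and $v_0$ blockwise on the $L(0)$-eigenspaces. The only cosmetic difference is your extra term $\tfrac12\langle v,v\rangle\,\mathbf{1}\,x^{-2}$: condition (\ref{rel-c-1}) together with the commutator formula already forces $v_iv=0$ for all $i\ge 0$, so this term vanishes and $\widetilde{L}(0)=L(0)+v_0$ exactly as in the paper (in particular you need not invoke the unstated assumption that the weight-zero subspace is $\mathbb{C}\mathbf{1}$).
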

\begin{proof} Let
$$ \widetilde{L(x)}= \sum_{n \in {\Z} } \widetilde{L(n)} x^{-n-2}
= \widetilde{Y}_{\widetilde{M} } (\omega,x). $$
Then
$$ \widetilde{L(x)} = Y_M ( \omega + x^{-1} v, x) = L(x) + x^{-1}
Y(v,x), $$
which implies $ \widetilde{L(n)} = L(n) + v_n$, and in particular
$$ \widetilde{L(0)} = L(0) + v_0. $$
Since $L(0)$ and $v_0$ both preserve graded components and the operator $L(0)$ acts as a scalar on each graded
component, it is clear that $\widetilde{L(0)}$ acts nonsemisimply if and only if $v_0$ acts nonsemisimply.
The proof follows. \end{proof}

\begin{remark}
We should mention that in \cite{FFHST}, the authors presented a
related construction of logarithmic modules by using logarithmic
deformation and the extension of the chiral algebra. We think that
it will be interesting to relate these two constructions of
logarithmic representations; we also refer the reader to Section 9,
which is in the spirit of \cite{FFHST}.
\end{remark}

\section{The vertex algebras ${\mathcal W}(p,p')$ and $\triplet$}

Define the lattice  $$L= {\Z} \a, \quad \la \a , \a \ra = 2 p p',
$$
where $p, p' \in {\Zp}$, $p \ne p'$, $p \ge 2$. Let $V_L$ be the associated
(rank one) lattice vertex algebra with the vertex operator map $Y$ (cf. \cite{LL}).

The vertex algebra $V_L$ is a
 subalgebra of the generalized vertex algebra $V_{
 \widetilde{L}}$, where $$\widetilde{L} = {\Z} \frac{\a}{2p p'}$$
 is the dual lattice.
 Let us denote the (generalized) vertex operator map in $V_{
 \widetilde{L}}$ again by  $Y$.

 Define
 $$ \omega = \frac{1}{4 p p'} \a(-1) ^2  + \frac{p-p'}{2 p p'
 }\a(-2).$$
 Then $\omega$ is a conformal vector with central charge
 $c_{p,p'}=1-\frac{6(p-p')^2}{pp'}$. The associated screening operators are
 $$ Q = e^{{\a} /{p'}}_0 = \mbox{Res}_x Y( e^{{\a}/{p'}}, x),
 \quad \widetilde{Q} = e^{-{\a} /{p}}_0 = \mbox{Res}_x Y( e^{- {\a}/{p}}, x).
 $$
It is known (cf. also \cite{A-2003}) that we have  $$[Q,
\widetilde{Q}] = 0.$$

The screening operators $Q$ and $\widetilde{Q}$ enable us to define
certain vertex subalgebras of $V_L$. Define
$$ \overline{V_L} = \mbox{Ker}_{V_L} \widetilde{Q}, \quad \overline{\overline{V_L}} = \mbox{Ker}_{V_L}
Q, \quad \mathcal{W}(p,p') = \overline{V_L} \cap
\overline{\overline{V_L}}. $$
Then $\overline{V_L}$, $\overline{\overline{V_L}}$ and
$\mathcal{W}(p,p')$ are vertex operator algebras with central
charge $c_{p,p'}$. The vertex algebra $\mathcal{W}(p,p')$ was introduced by Feigin et al. in \cite{FGST-log}.

It can be shown (by using Felder's complex or rationality of
$L(c_{p,p'},0)$) that
$$V(c_{p,p'},0) \subset \logmin,$$
where $V(c_{p,p'},0)$ is the universal Virasoro vertex algebra of
central charge $c_{p,p'}$ (generalized Verma module) and
$L(c_{p,p'},0)$ is its simple quotient.

It is not clear to us how to find a strongly generated set for
$\mathcal{W}(p,p')$ in general, but for $p'=2$ we recently made some progress \cite{AdM-wp2}. Then,  motivated mainly by \cite{FGST-log} and \cite{AdM-wp2}, we
do expect the following conjecture to be true (cf. also \cite{AdM-wp2}).

\begin{conjecture} The vertex algebra $\mathcal{W}(p,p')$ is $C_2$-cofinite with $2pp'+\frac{(p-1)(p'-1)}{2}$ inequivalent irreducible modules.
\end{conjecture}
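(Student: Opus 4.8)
The plan is to establish the two assertions separately: first the $C_2$-cofiniteness of $\logmin$, and then the classification and counting of its irreducible modules. For the first part I would attempt to produce a finite strong generating set. By analogy with the triplet case $\triplet$ (the specialization $p'=1$), which is strongly generated by $\omega$ together with a triplet of primary vectors lying in $\mbox{Ker}_{V_L}\, Q \cap \mbox{Ker}_{V_L}\, \widetilde{Q}$, I expect $\logmin$ to be strongly generated by $\omega$ and finitely many lowest-weight vectors of the screening kernel, read off from the lattice realization in $V_{\widetilde{L}}$. Once such generators are in hand, computing $\logmin/C_2(\logmin)$ reduces to a finite problem, and one checks directly that the quotient is finite-dimensional.

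A more robust route to $C_2$-cofiniteness would avoid the explicit generating set. Since $\logmin$ contains the rational, $C_2$-cofinite simple Virasoro vertex algebra $L(c_{p,p'},0)$ as its Virasoro subalgebra and is a module over it, $\logmin$ decomposes as a sum of the finitely many irreducible minimal-model modules, each appearing with a conformal-weight-graded multiplicity space. One would then invoke an extension criterion of Miyamoto type: if these multiplicity spaces are suitably controlled — for instance, if the extension is generated by finitely many $L(c_{p,p'},0)$-submodules under a finite family of intertwining operators — then $C_2$-cofiniteness of $\logmin$ follows from that of $L(c_{p,p'},0)$. The essential input is finiteness of the decomposition data, which the Felder resolution should supply.

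For the module classification I would construct candidates explicitly inside $V_{\widetilde{L}}$. Restricting the irreducible $V_L$-modules $V_{L+\gamma}$, $\gamma \in \widetilde{L}$, and the associated Fock modules to $\logmin$, and passing to socles or to the subquotients cut out by the screenings, I expect two families to emerge: the $2pp'$ generic modules indexed by the cosets $\widetilde{L}/L \cong \Z/2pp'\Z$ (generalizing the $2p$ irreducibles of $\triplet$), together with the $\frac{(p-1)(p'-1)}{2}$ modules carried by the irreducible $(p,p')$-minimal-model modules $L(c_{p,p'},h_{r,s})$ that extend over $\logmin$. Irreducibility and pairwise inequivalence are checked by comparing lowest conformal weights and the action of the extra generators, while the Felder complex identifies precisely which Fock spaces are degenerate.

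To prove completeness — that no further irreducibles exist — I would use that a $C_2$-cofinite vertex algebra has finite-dimensional Zhu algebra $A(\logmin)$, whose simple modules biject with the irreducible $\logmin$-modules, and then show that $A(\logmin)$ has exactly $2pp'+\frac{(p-1)(p'-1)}{2}$ simple modules, e.g. by presenting $A(\logmin)$ as a quotient of a polynomial algebra in the images of the generators. A complementary verification comes from modular invariance: the space of (generalized and pseudo-)characters spanned by the trace functions of the constructed modules should be $SL(2,\Z)$-invariant of the expected dimension. The main obstacle is squarely the first step: as noted above, no explicit strong generating set for $\logmin$ is known in general, so establishing $C_2$-cofiniteness — on which finiteness of $A(\logmin)$ and hence the entire counting argument depends — is the crux of the problem.
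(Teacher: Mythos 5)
The statement you were asked about is explicitly a \emph{conjecture} in the paper: the authors state just before it that they do not know how to find a strong generating set for $\mathcal{W}(p,p')$ in general, and they offer no proof --- only motivation from \cite{FGST-log} and their own work on the $p'=2$ case. So there is no proof in the paper to compare yours against, and your text is likewise not a proof: it is a research program in which every substantive claim is conditional. You candidly admit this yourself in the last sentence, where you identify $C_2$-cofiniteness as "the crux of the problem" and leave it unresolved. Since finiteness of the Zhu algebra $A(\mathcal{W}(p,p'))$, the completeness of the module list, and the modular-invariance cross-check all hang on that first step, nothing in the proposal is actually established.

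Beyond the global conditionality, two specific steps would need real work even granting $C_2$-cofiniteness. First, the Miyamoto-type extension criterion you invoke requires more than "suitably controlled" multiplicity spaces; $\mathcal{W}(p,p')$ is an infinite direct sum of minimal-model modules over $L(c_{p,p'},0)$, and no general theorem lets you transfer $C_2$-cofiniteness across such an infinite extension without additional structure. Second, the bijection between simple $A(\mathcal{W}(p,p'))$-modules and irreducible modules only captures $\N$-graded modules and says nothing a priori about the higher Zhu algebras needed to pin down logarithmic/indecomposable structure; and "presenting $A(\mathcal{W}(p,p'))$ as a quotient of a polynomial algebra in the images of the generators" presupposes the very generating set whose existence is the open problem. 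Your heuristic count $2pp'+\frac{(p-1)(p'-1)}{2}$ does match the conjecture (the $2pp'$ "lattice-type" modules plus the minimal-model modules surviving the screenings), so the picture is the right one --- but as a proof attempt this has an irreparable gap at its foundation, which is precisely why the authors left it as a conjecture.
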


\begin{remark}
In the case $p' =1$, $\overline{V_L}= \triplet$ is the triplet
vertex algebra with central charge $c_{p,1}$. In this case
$$\overline{\overline{V_L}} = \mathcal{W}(p,1) =
L^{Vir}(c_{p,1},0). $$
\end{remark}

\section{Extended vertex algebras $V_L \oplus V_{L-\a / p}$ and $V_L \oplus V_{L+ \a / p'}$}

Observe first that if $V_L $ is of
central charge $c_{p,p'}$, both  $V_{L- \a /p}$ and $V_{L+ \a /
p'}$ are simple  $V_L$--modules with integral weights. Then on $V_L
\oplus V_{L-\a / p}$   (resp.   $V_L \oplus V_{L+ \a / p'}$ ) we
have the structure of a vertex operator algebra (see
\cite{Li-bilinear}).  In this section we shall investigate these
extended vertex operator algebras and its modules. The associated
vertex operators can be reconstructed from the generalized vertex
algebra $(V_{\widetilde{L}}, Y)$.

\vskip 5mm

Define
$$  {\cV}_i =
 V_{ L + \frac{i}{2 p'} \a} \bigoplus V_{ L + \frac{i}{2 p'} \a - \frac{1}{p} \a}, \quad i=0, \dots, 2 p'-1,$$
 and
$$ {\cV}^o_j =
 V_{ L + \frac{j}{2 p} \a} \bigoplus V_{ L + \frac{j}{2 p} \a + \frac{1}{p'} \a}, \quad j=0, \dots, 2 p-1.$$

Then ${\cV}= {\cV}_0$ (resp. ${\cV}^o={\cV}_0^o$) is a vertex
operator algebra and each ${\cV}_i$ (resp. ${\cV}^o_j $) is an
${\cV}$--module (resp. ${\cV}^o$--module). The associated vertex operators $Y_{{\cV}_i}$  are
defined as follows:
$$ Y_{{\cV}_i}  ( u + v, x) (u_i + v_i) = Y (u + v, x) u_i + Y(u,x
) v_i,$$
where $ u \in V_L$, \ $v  \in  V_{L-\frac{1}{p}\a}$, \ $u_i \in V_{
L + \frac{i}{2 p'} \a}$, \ $v_i \in V_{ L + \frac{i}{2 p'} \a -
\frac{1}{p} \a}$. (Note that we require $Y_{{\cV}_i}(v,x)v_i = 0$.)
Similarly we define the vertex operator $Y_{{\cV}^o _j}$.

\begin{remark}
Assume that $V$ is a vertex operator algebra and $M$ is a
$V$--module with integral weights. Then $V \oplus M$ carries the
vertex operator algebra structure (cf. \cite{Li-bilinear}). This
structure in the case $V=V_L$ and $M= V_{L-\a /p}$ coincides with
the vertex operator algebra structure on ${\cV}$ described above.
\end{remark}

We fix $V= {\cV}_0$.
%
%
In this section we shall consider the case $v = e ^{-\a / p}$.
Then
$$[v_n, v_m] = 0 \quad \forall m,n \in {\Z}.$$
But $v_0 = \widetilde{Q}$ is a screening operator and it does not
acts semisimply on ${\cV}_0$. On the other hand $\widetilde{Q}$
acts trivially on $\overline{V_L}$ and we can use $\Delta( v,x)$
to construct weak $\overline{V_L}$--modules. Similarly we can use
$\Delta( e ^{\a /p'},x)$ to construct weak
$\overline{\overline{V_L}}$--modules.

\begin{theorem} \label{log-modules-triplet}

\item[(i)] Assume that $(M, Y_M)$ is a weak ${\cV}$--module. Then
the structure  $$(\widetilde{M}, \widetilde{Y}_{\widetilde{M}
}(\cdot,x) ) := ( M, Y_M( \Delta(e^{-\a /p},x) \cdot,x))$$
%
%
%
  is a weak $ \overline{V_L}$--module. In particular,
$$(\widetilde{{\cV}_i}, \widetilde{Y}_{\widetilde{{\cV}_i} } ) ,
\quad  i=0, \dots, 2 p'-1, $$ are weak $\overline{V_L}$--modules.

\item[(ii)] Assume that $(M, Y_M)$ is a weak ${\cV}^{o}$--module.
Then the structure  $$(\widetilde{M}, \widetilde{Y}_{\widetilde{M}
}(\cdot,x)) := ( M, Y_M( \Delta(e^{\a /p'},x) \cdot,x))$$
%
%
%
  is a weak $ \overline{\overline{V_L}}$--module. In particular,
$$(\widetilde{{\cV}^o_j}, \widetilde{Y}_{\widetilde{{\cV}^o_j} } ) ,
\quad  j=0, \dots, 2 p-1, $$ are weak
$\overline{\overline{V_L}}$--modules.
\end{theorem}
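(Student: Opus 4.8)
The plan is to deduce both statements as direct applications of Theorem \ref{gen-const-log}, with $V = \cV_0$ and the distinguished even vector taken to be $v = e^{-\a/p}$ in part (i) and $v = e^{\a/p'}$ in part (ii). Thus the entire content of the proof is to verify that these vectors fulfill hypotheses (\ref{rel-c-1}) and (\ref{rel-c-2}) and to identify the associated kernel subalgebra $\overline{V}$ of Theorem \ref{gen-const-log} with $\overline{V_L}$ (resp. $\overline{\overline{V_L}}$). I treat part (i) in detail; part (ii) follows by the evident symmetry $p \leftrightarrow p'$, $\a \to -\a$.

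First I would check condition (\ref{rel-c-2}), i.e.\ that $v=e^{-\a/p}$ is primary of conformal weight one for the background-charge Virasoro field $Y(\omega,x)$. Writing out the modes $L(m)$ arising from $\omega = \frac{1}{4pp'}\a(-1)^2 + \frac{p-p'}{2pp'}\a(-2)$ and using $\a(0)e^{-\a/p} = -2p'\,e^{-\a/p}$ together with $\a(n)e^{-\a/p}=0$ for $n>0$, a short computation yields $L(0)e^{-\a/p} = \big(\tfrac{p'}{p} + \tfrac{p-p'}{p}\big)e^{-\a/p} = e^{-\a/p}$ and $L(n)e^{-\a/p}=0$ for $n\ge 1$, so (\ref{rel-c-2}) holds; the same computation with $\a(0)e^{\a/p'} = 2p\,e^{\a/p'}$ gives weight one for $v=e^{\a/p'}$ as well. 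Evenness of $v$ is automatic, since $V_L$ is an ordinary (bosonic) lattice vertex algebra.

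The key point is condition (\ref{rel-c-1}). Here I would exploit the $\a$-charge grading of $\cV_0 = V_L \oplus V_{L-\frac{1}{p}\a}$: the operator $Y(v,x) = Y_{\cV_0}(e^{-\a/p},x)$ sends $V_L$ into $V_{L-\frac{1}{p}\a}$ and, by the prescription $Y_{\cV_0}(v,x)v_i = 0$ built into the definition of $\cV_0$, annihilates the summand $V_{L-\frac{1}{p}\a}$. Consequently $Y(v,x)Y(v,y) = 0$ as operators on $\cV_0$, whence $[v_n,v_m]=0$ for all $n,m$, establishing (\ref{rel-c-1}); the analogous charge argument on $\cV^o_0 = V_L \oplus V_{L+\frac{1}{p'}\a}$ handles $v=e^{\a/p'}$.

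Finally I would identify $v_0 = \mbox{Res}_x\,Y(e^{-\a/p},x) = \widetilde{Q}$, so that the vertex subalgebra $\overline{V_L} = \mbox{Ker}_{V_L}\widetilde{Q}$ lies inside $\mbox{Ker}_{\cV_0}\,v_0$ and may serve as the subalgebra $\overline{V}$ in Theorem \ref{gen-const-log}. On $\overline{V_L}$ the operator $v_0$ acts as zero, so $x^{v_0}$ is the identity there and $\Delta(v,x)$ of (\ref{delta-main}) is unambiguous; thus for any weak $\cV$--module $(M,Y_M)$ the pair $(M, Y_M(\Delta(e^{-\a/p},x)\cdot,x))$ is a weak $\overline{V_L}$--module by Theorem \ref{gen-const-log}. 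Specializing $M$ to the weak $\cV$--modules $\cV_i$ produces the asserted family $\widetilde{\cV_i}$, and the mirror argument with $v=e^{\a/p'}$, $v_0 = Q$, $\overline{V} = \overline{\overline{V_L}}$ applied to the $\cV^o_j$ proves (ii). The only genuinely delicate point is (\ref{rel-c-1}): one must work with the restricted, charge-respecting vertex operator on $\cV_0$, for which self-products of $v$ vanish, rather than the unrestricted operator in the generalized vertex algebra $V_{\widetilde{L}}$, where $\la -\a/p,-\a/p\ra = 2p'/p$ is generally non-integral and the modes would fail to commute.
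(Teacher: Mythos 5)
Your proposal is correct and follows the same route as the paper: the theorem is obtained there as an immediate application of Theorem \ref{gen-const-log} with $V=\cV_0$ (resp.\ $\cV^o_0$), $v=e^{-\a/p}$ (resp.\ $e^{\a/p'}$), $v_0=\widetilde{Q}$ (resp.\ $Q$), and $\overline{V}=\overline{V_L}$ (resp.\ $\overline{\overline{V_L}}$). You merely supply the verifications the paper asserts without comment, and your justification of $[v_n,v_m]=0$ via the charge grading of $\cV_0$ and the prescription $Y_{\cV_0}(v,x)v_i=0$ is exactly the right reason the hypothesis (\ref{rel-c-1}) holds.
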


We shall now investigate these $\overline{V_L}$--modules. For
simplicity we set
$\widetilde{Y}=\widetilde{Y}_{\widetilde{{\cV}_i} }$. Let
$$ L(x) = Y(\omega, x) = \sum_{n \in {\Z} } L(n) x^{-n-2}, \quad
\widetilde{L(x)} = \widetilde{Y}(\omega, x) = \sum_{n \in {\Z} }
\widetilde{L(n)} x ^{-n-2}. $$
Then
$$ \widetilde{L(x)} = L(x) +  x^{-1} Y( e^{-\a /p},x), \quad
  $$
$$ \widetilde{L(n)} = L(n) +   e^{-\a /p }_n.$$
In particular,
$$ \widetilde{L(0)} = L(0) +   \widetilde{Q}.$$
Similarly we see that on $\widetilde{{\cV}^o_i}$ we have
$$ \widetilde{L(0)} = L(0) +   Q.$$
Now it is easy to see that modules $\widetilde{{\cV}_i}$ and
$\widetilde{{\cV}^o_i}$ are logarithmic.

\begin{corollary}
\item[(i)] For every $i =0, \dots, 2 p'-1$, $\widetilde{{\cV}_i}$
are logarithmic $\overline{V_L}$--modules.
\item[(ii)] For every $i =0, \dots, 2 p'-1$, $\widetilde{{\cV}_i}$
are logarithmic ${\mathcal W}(p,p')$--modules.
\item[(iii)] For every $i =0, \dots, 2 p-1$,
$\widetilde{{\cV}^o_i}$ are logarithmic ${\mathcal
W}(p,p')$--modules.
\end{corollary}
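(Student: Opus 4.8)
The plan is to read all three parts off Theorem \ref{main}, verifying its hypotheses on each module. For part (i) I take $V=\cV$, $\overline{V}=\overline{V_L}$, $v=e^{-\a/p}$ and $M=\cV_i$; the weak $\overline{V_L}$--module structure on $\widetilde{\cV_i}$ is already granted by Theorem \ref{log-modules-triplet}(i). Theorem \ref{main} then says $\widetilde{\cV_i}$ is logarithmic precisely when $L(0)$ acts semisimply on $\cV_i$ while $v_0=\widetilde{Q}$ does not. The first condition is immediate: each $\cV_i$ is a direct sum of the lattice modules $V_{L+\beta}$, and for the modified conformal vector one computes $L(0)=\frac{1}{4pp'}L_0^{\mathrm{Heis}}-\frac{p-p'}{2pp'}\a(0)$, a sum of two commuting diagonalizable operators, hence diagonalizable. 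So part (i) is reduced to the single assertion that $\widetilde{Q}$ acts nonsemisimply on $\cV_i$.

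To establish that nonsemisimplicity I would use the two--step structure $\cV_i=V_{L+\frac{i}{2p'}\a}\oplus V_{L+\frac{i}{2p'}\a-\frac1p\a}$. By the definition of $Y_{\cV_i}$, together with the convention $Y_{\cV_i}(v,x)v_i=0$, the operator $v_0=e^{-\a/p}_0$ sends the first summand into the second (it shifts the charge by $-\a/p$) and annihilates the second summand (shifting it out of $\cV_i$); hence $v_0^2=0$ on all of $\cV_i$. A nonzero nilpotent operator is never semisimple, so it suffices to check $v_0\neq 0$. This I would verify by a residue computation in the lattice vertex algebra: for $\mu=(k+\tfrac{i}{2p'})\a$ one has $\langle-\a/p,\mu\rangle=-2p'k-i$, and taking $k$ large enough makes this $\le -1$, so that the coefficient of $x^{-1}$ in $Y(e^{-\a/p},x)e^{\mu}$ is a nonzero vector lying in the second summand. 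Thus $v_0$ is a nonzero nilpotent, therefore nonsemisimple, and part (i) follows from Theorem \ref{main}.

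For part (ii) I would simply restrict the module structure along $\logmin=\overline{V_L}\cap\overline{\overline{V_L}}\subseteq\overline{V_L}$. Since $\logmin$ shares the conformal vector $\om$ with $\overline{V_L}$, the Virasoro zero mode $\widetilde{L(0)}=L(0)+\widetilde{Q}$ is literally the same operator on $\widetilde{\cV_i}$ whether computed over $\overline{V_L}$ or over $\logmin$; being nonsemisimple, it witnesses that the restricted module is again logarithmic. Part (iii) is entirely parallel: one takes $V=\cV^o$, $v=e^{\a/p'}$, $v_0=Q$ and $\overline{V}=\overline{\overline{V_L}}$, where the formula $\widetilde{L(0)}=L(0)+Q$ and the analogous nilpotency and nonvanishing of $Q$ on $\cV^o_j$ give that $\widetilde{\cV^o_j}$ is logarithmic over $\overline{\overline{V_L}}$, hence, by the same restriction, over $\logmin$.

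The one genuinely substantive point is the nonvanishing $v_0\neq 0$ on every $\cV_i$ (and $Q\neq 0$ on every $\cV^o_j$); everything else is formal once Theorem \ref{main} is in hand. I expect this residue estimate to be routine, but it is the step where one must actually invoke the specific lattice data (the pairing $\langle\a,\a\rangle=2pp'$ and the cosets defining the $\cV_i$) rather than pure formalism. In particular, the preceding discussion records nonsemisimplicity only on $\cV_0$, so extending it uniformly to all $i$ (and to the $\cV^o_j$) is exactly what this step must supply.
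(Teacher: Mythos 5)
Your proposal is correct and follows essentially the same route as the paper: the paper derives $\widetilde{L(0)}=L(0)+\widetilde{Q}$ on $\widetilde{\cV_i}$ (resp.\ $\widetilde{L(0)}=L(0)+Q$ on $\widetilde{\cV^o_j}$) and then declares the corollary ``easy to see,'' which is precisely the content of Theorem \ref{main} that you invoke. The only difference is that you spell out the detail the paper leaves implicit --- that $\widetilde{Q}$ is a nonzero nilpotent (hence nonsemisimple) operator on each $\cV_i$ via the two-summand structure and a residue computation --- and that detail is sound.
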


\begin{remark} Observe that nothing had prevented us to work with more general operators $\Delta( \lambda e^{-\alpha/p},x)$, where
$\lambda \neq 0$. But this modification does not lead to new representations; it merely contributes to the endomorphism
algebra of a (reducible) logarithmic module in question.
\end{remark}

\section{Logarithmic  and indecomposable modules for the triplet vertex algebra
$\triplet$}

\label{triplet}

Now we shall consider the case $p'=1$ and $p \ge 2$. Then
$\overline{V_L}$ is the triplet vertex algebra $\triplet$
investigated in \cite{AdM-triplet}, \cite{FHST}, \cite{Flohr-mod},
\cite{FGST-triplet}, \cite{FGST-triplet2}, etc. For more about the
triplet algebra see \cite{Ka}, \cite{Fu}, \cite{CF}, etc.

We know that $\triplet$ is a vertex subalgebra of $V_L$, strongly generated by
$$ \omega, \quad F= e^{-\a}, \quad H= Q F, \quad E = Q^2 F.$$
Recall also that $\triplet$ has exactly $2p$ irreducible modules:
$$ \Lambda(1), \dots, \Lambda(p), \Pi(1), \dots, \Pi(p).$$

The construction in the previous section provides two logarithmic
$\triplet$--modules: $(\widetilde{\cV}, \widetilde{Y})$ and
$(\widetilde{{\cV}_1},\widetilde{Y} )$.

For  $a \in \triplet$, let
$$\widetilde{Y}(a,x) = \sum_{n \in \Z} \widetilde{a_n} x^{-n-1} .
$$

Define the following operator acting on $\widetilde{{\cV}}$ and
$\widetilde{{\cV}}_1$:
\bea && G = e^{\a}_0 + {\nu}_p e^{\a - \a /p}_{-1}, \qquad \nu_p =
\frac{p}{p-1}. \label{scr-def} \eea

Then the definition of $ \widetilde{Y}(a,x)$ implies that
\bea && \widetilde{a_n} = a_n + \sum_{m =1 } ^{ \infty} \frac{(-1)
^m }{-m} ( e^{-\a /p}_m a)_{n-m} . \label{act-def} \eea

\begin{lemma} \label{G-action} On $\triplet$--modules $\widetilde{{\cV}}$ and
$\widetilde{{\cV}}_1$ we have
\item[(i)]
$ [G, \widetilde{a_n} ] = \widetilde{( Q a)_n},$
for every $n \in {\Z}$ and $a \in \triplet$.

\item[(ii)] $ [ G, \widetilde{L(n)} ]= 0,$
i.e., $G$ is a screening operator which commutes with the action
of the Virasoro algebra on $\widetilde{{\cV}}$ and
$\widetilde{{\cV}}_1$.

\end{lemma}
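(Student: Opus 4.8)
The plan is to prove both statements by direct computation using the explicit formula \eqref{act-def} for the deformed modes $\widetilde{a_n}$ together with the definition \eqref{scr-def} of $G$. The key algebraic input is the standard commutator formula in a (generalized) vertex algebra, relating the commutator $[u_m, a_n]$ to the modes $(u_j a)_{m+n-j}$, which I would apply inside the bracket $[G, \widetilde{a_n}]$. Since $G$ is built from the modes $e^{\a}_0$ and $e^{\a-\a/p}_{-1}$, and since $\widetilde{a_n}$ is an infinite sum of compositions of modes of $a$ with modes of $e^{-\a/p}$, the whole computation is organized around the operator product expansions among the three vertex-operator factors $e^{\a}$, $e^{\a-\a/p}$ and $e^{-\a/p}$, all read off inside the generalized lattice vertex algebra $V_{\widetilde{L}}$.

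For part (i), first I would observe that the screening $Q = e^{\a/p'}_0$ specializes at $p'=1$ to $Q = e^{\a}_0$, so the claimed identity $[G,\widetilde{a_n}] = \widetilde{(Qa)_n}$ is really the assertion that $G$ implements, on the deformed modes, exactly the screening action that $Q$ implements on the undeformed ones. The natural strategy is to show that $G$ is precisely the deformation of $Q$ under the map $a_n \mapsto \widetilde{a_n}$; that is, that $\widetilde{Q}$-conjugating (in the sense of $\Delta(e^{-\a/p},x)$) the operator $e^{\a}_0$ produces exactly $e^{\a}_0 + \nu_p\, e^{\a-\a/p}_{-1}$. The correction term, with its coefficient $\nu_p = p/(p-1)$, should emerge from the leading singular term in the operator product of $e^{-\a/p}$ with $e^{\a}$: the pairing $\langle -\a/p, \a\rangle = -2p$ in the lattice (recall $\langle\a,\a\rangle = 2p$ when $p'=1$) controls the degree of the pole, and expanding $\Delta(e^{-\a/p},x)$ acting through this OPE generates the shifted vertex operator $e^{\a-\a/p}$ together with the rational coefficient. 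I would verify that after this identification the desired commutator reduces to the known screening identity $[Q, a_n] = (Qa)_n$, transported verbatim to the tilde-deformed setting.

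For part (ii), the cleanest route is to deduce it from part (i). Apply (i) with $a = \omega$: then $[G,\widetilde{L(n)}]$ is (up to the shift between $L$-modes and the modes $\omega_{n+1}$) governed by $\widetilde{(Q\omega)}$, and since $\omega$ lies in $\overline{V_L} = \mathrm{Ker}\,\widetilde{Q}$ — and more to the point, $Q\omega = 0$ because $\omega$ is a conformal vector fixed by the screening $Q$ — the right-hand side vanishes. I would make precise the passage from $\widetilde{a_n}$ with $a=\omega$ to the Virasoro modes $\widetilde{L(n)}$, using $\widetilde{L(n)} = L(n) + e^{-\a/p}_n$ as computed just above the lemma, and check that $G$ commutes with the extra summand $e^{-\a/p}_n$ as well; this last point follows because $e^{\a}$, $e^{\a-\a/p}$ and $e^{-\a/p}$ have lattice pairings that make their mutual brackets either zero or total derivatives that integrate to zero after taking the relevant residues.

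The main obstacle I anticipate is bookkeeping the infinite sums: both $G$ and $\widetilde{a_n}$ involve tails of modes, so establishing (i) rigorously requires showing that the cross terms between the correction $\nu_p\, e^{\a-\a/p}_{-1}$ in $G$ and the deformation tail in $\widetilde{a_n}$ reassemble exactly into the tail of $\widetilde{(Qa)_n}$. Pinning down the coefficient $\nu_p = p/(p-1)$ and confirming that no additional singular terms survive is the delicate step; I expect this to hinge on a careful OPE computation of $Y(e^{-\a/p},x)\,Y(e^{\a},y)$ and a matching of residues, which is precisely where the numerical value of $\langle\a,\a\rangle$ and the choice $p'=1$ enter.
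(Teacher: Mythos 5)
Your general toolkit (formula (\ref{act-def}), the mode commutator formula, lattice OPEs) is the right one, and your reduction of (ii) to (i) via $a=\omega$ and $Q\omega=0$ is exactly what the paper does. But the central mechanism you propose for (i) --- that $G$ is ``precisely the deformation of $Q$ under $a_n\mapsto\widetilde{a_n}$'' --- fails. First, the coefficient does not match: since $\langle -\a/p,\a\rangle=-2$ (not $-2p$ as you wrote), one has $e^{-\a/p}_1e^{\a}=e^{\a-\a/p}$ and $e^{-\a/p}_m e^{\a}=0$ for $m\ge 2$, so the tail in (\ref{act-def}) applied to $e^{\a}$ contributes $\tfrac{(-1)^1}{-1}(e^{\a-\a/p})_{-1}=e^{\a-\a/p}_{-1}$ with coefficient $1$, not $\nu_p=p/(p-1)$; hence the naively deformed zero mode of $e^{\a}$ is $e^{\a}_0+e^{\a-\a/p}_{-1}\neq G$. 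Second, even if the operators agreed, $e^{\a}\notin \overline{V_L}=\mathrm{Ker}\,\widetilde{Q}$ (indeed $\widetilde{Q}e^{\a}=-\tfrac1p\a(-1)e^{\a-\a/p}\neq 0$), so $\Delta(e^{-\a/p},x)e^{\a}$ is ambiguous and Theorem \ref{gen-const-log} gives you no commutator formula for deformed modes of $e^{\a}$ against $\widetilde{a_n}$; the step ``transported verbatim to the tilde-deformed setting'' has no justification. Relatedly, your claim in (ii) that $G$ commutes with the summand $e^{-\a/p}_n$ separately is false ($[Q,e^{-\a/p}_n]=(\a(-1)e^{\a-\a/p})_n\neq 0$); only the full bracket with $\widetilde{L(n)}=\widetilde{\omega_{n+1}}$ vanishes, and no such separate check is needed once you apply (i) to $\omega$.

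What actually proves (i), and what your proposal never isolates, is a cancellation of two tails in the direct expansion. Writing $[G,\widetilde{a_n}]=[Q,\widetilde{a_n}]+\nu_p[e^{\a-\a/p}_{-1},a_n]$ (the correction term of $G$ only meets the leading term $a_n$ nontrivially in the relevant order), the $Q$-bracket yields $\widetilde{(Qa)_n}$ plus the cross term $\sum_{m\ge1}\tfrac{(-1)^m}{-m}\bigl(([Q,e^{-\a/p}_m])a\bigr)_{n-m}$. The key identity is $Qe^{-\a/p}=\a(-1)e^{\a-\a/p}=\nu_p\,L(-1)e^{\a-\a/p}$, whence $[Q,e^{-\a/p}_m]=(Qe^{-\a/p})_m=-m\,\nu_p\,e^{\a-\a/p}_{m-1}$; this --- not the OPE of $e^{-\a/p}$ with $e^{\a}$ --- is where $\nu_p$ enters. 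The resulting tail $\nu_p\sum_{m\ge1}(-1)^m(e^{\a-\a/p}_{m-1}a)_{n-m}$ then cancels exactly against $\nu_p[e^{\a-\a/p}_{-1},a_n]=\nu_p\sum_{i\ge0}\binom{-1}{i}(e^{\a-\a/p}_i a)_{n-1-i}$. Your opening paragraph points at this direct computation, but without identifying the $L(-1)$-rewriting and the cancellation, the proof is not there; as it stands the proposed route for (i) would not close.
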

\begin{proof} It is clear that (i) implies (ii). Let us prove
assertion (i). By using formula (\ref{act-def}) we get:
\bea
[G, \widetilde{a_n} ] &=&   [ Q + {\nu}_p e^{\a - \a /p} _{-1},
\widetilde{a_n}] \nonumber \\
& = & \widetilde{( Q a)_n} + {\nu}_p [ e ^{\a - \a /p}_{-1}, a_n]
+ \sum_{m =1 } ^{ \infty} \frac{(-1) ^m }{-m} ( [Q,  e^{-\a /p}_m]
a)_{n-m} \nonumber \\
& = & \widetilde{( Q a)_n}  +{\nu}_p \sum_{i =0 } ^{ \infty} {-1
\choose i}    (  e^{\a -\a /p}_{i} a)_{n-1-i} +  {\nu}_p \sum_{m
=1 } ^{ \infty} (-1) ^m  (  e^{\a -\a /p}_{m-1} a)_{n-m} \nonumber
\\
& = & \widetilde{( Q a)_n}. \nonumber
 \eea
The proof follows. \end{proof}

\vskip 5mm

For $X \in \{E,F,H\}$, we define
$$ \widetilde{X}(n):= \tilde{X}_{n+2p-2}.$$
We shall consider the following $\triplet$--module:

 $$  \mathcal{P}^+_{p-1}= \triplet . e^{\frac{\a}{2}} \subseteq
\widetilde{{\cV}_1}.$$

\begin{theorem} \label{log-p1}

\item[(i)] The module $\mathcal{P}^+_{p-1}$ is a ${\N}$--graded
logarithmic $\triplet$--module
$$ \mathcal{P}^+_{p-1} = \bigoplus_{n \in \N}  \mathcal{P}^+_{p-1}
(n),$$
with the top component  $\mathcal{P}^+_{p-1} (0)$ of dimension two, spanned by $\mbox{span}_{\C} \{
e^{\frac{\a}{2}}, e^{\frac{\a}{2} -\frac{\a}{p}} \}$. In this basis
$\widetilde{L(0)}$ acts on $\mathcal{P}^+_{p-1} (0)$ as
$$\left[\begin{array}{cc} \frac{2-p}{4} & 1 \\ 0 &   \frac{2-p}{4} \end{array} \right].$$
\item[(ii)] The $\triplet$--module $\mathcal{P}^+_{p-1}$ is
is invariant under the action of $G$, and we have
$$\mathcal{P}^+_{p-1}=\widetilde{\mathcal{V}_1}.$$
\end{theorem}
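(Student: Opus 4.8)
The plan is to prove the two parts of Theorem \ref{log-p1} separately, establishing first the graded structure and the explicit matrix for $\widetilde{L(0)}$ on the top component, and then the identification of $\mathcal{P}^+_{p-1}$ with the full module $\widetilde{\mathcal{V}_1}$.

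For part (i), I would begin by computing the conformal weights of the two distinguished vectors $e^{\frac{\a}{2}}$ and $e^{\frac{\a}{2}-\frac{\a}{p}}$ with respect to the \emph{original} grading operator $L(0)$, using $\omega = \frac{1}{4pp'}\a(-1)^2 + \frac{p-p'}{2pp'}\a(-2)$ with $p'=1$. A direct evaluation of $L(0) e^{\beta} = \left(\tfrac{1}{4p}\la\beta,\beta\ra + \tfrac{p-1}{2p}\la\beta,\alpha\ra\right)e^{\beta}$ on these two lattice vectors should show that both lie in the $L(0)$-eigenvalue $\frac{2-p}{4}$, which explains why they land in the same graded component and why $\widetilde{L(0)} = L(0) + \widetilde{Q}$ can fail to be semisimple there. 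The key computation is then the action of $\widetilde{Q} = \widetilde{e^{-\a/p}_0}$ (equivalently, the screening $v_0$ with $v = e^{-\a/p}$) on this two-dimensional space: one checks that $\widetilde{Q}\, e^{\frac{\a}{2}} = c\, e^{\frac{\a}{2}-\frac{\a}{p}}$ for some nonzero constant $c$, while $\widetilde{Q}\, e^{\frac{\a}{2}-\frac{\a}{p}} = 0$ (the second screening lands in a component killed by the lattice structure or by weight considerations). After rescaling the second basis vector to absorb $c$, this yields precisely the Jordan block displayed in the statement, with $\widetilde{L(0)} = L(0) + \widetilde{Q}$ acting as $\frac{2-p}{4}\,\mathrm{Id} + N$ where $N$ is the rank-one nilpotent. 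That $\mathcal{P}^+_{p-1}$ is $\N$-graded follows because $\widetilde{L(0)}$ has only nonnegative rational eigenvalues shifted to start at the top weight, and the grading is by generalized $\widetilde{L(0)}$-eigenspaces; the logarithmic property is then immediate from Theorem \ref{main}, since $v_0 = \widetilde{Q}$ does not act semisimply.

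For part (ii), I would first verify $G$-invariance: by Lemma \ref{G-action}(i), $G$ intertwines the deformed action in the sense that $[G,\widetilde{a_n}] = \widetilde{(Qa)_n}$, so $G$ maps any $\triplet$-submodule generated by a vector to another such submodule; concretely, applying $G$ to $e^{\frac{\a}{2}}$ produces a vector in $\widetilde{\mathcal{V}_1}$ that again lies in $\triplet . e^{\frac{\a}{2}}$ once one checks $G\, e^{\frac{\a}{2}}$ stays inside the cyclic module (this uses the explicit form \eqref{scr-def} of $G$ together with the lattice fusion rules). The identification $\mathcal{P}^+_{p-1} = \widetilde{\mathcal{V}_1}$ is the substantive claim: one inclusion is clear since $\mathcal{P}^+_{p-1} \subseteq \widetilde{\mathcal{V}_1}$ by definition, so I must show the cyclic $\triplet$-module generated by $e^{\frac{\a}{2}}$ exhausts all of $\widetilde{\mathcal{V}_1} = V_{L+\frac{1}{2p'}\a} \oplus V_{L+\frac{1}{2p'}\a - \frac{1}{p}\a}$ (with $p'=1$). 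The strategy is a character/dimension count at each graded level together with irreducibility inputs: using that $\triplet$ is strongly generated by $\omega, E, F, H$, one shows that the action of these generators (in their deformed form $\widetilde{E},\widetilde{F},\widetilde{H}$) on the top component already surjects onto each graded piece, comparing graded dimensions of $\triplet . e^{\frac{\a}{2}}$ with those of the lattice module $\widetilde{\mathcal{V}_1}$.

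\textbf{The main obstacle} I anticipate is this surjectivity/equality of modules in part (ii). Showing that the cyclic module generated by the top two-dimensional space fills out the \emph{entire} reducible logarithmic module $\widetilde{\mathcal{V}_1}$ requires controlling how the deformed triplet generators move between the two lattice summands $V_{L+\frac{1}{2}\a}$ and $V_{L+\frac{1}{2}\a-\frac{1}{p}\a}$, and verifying no graded piece is missed. I expect this to hinge on the nondegeneracy of the screening action established in part (i) combined with the known module structure of the $\triplet$-modules $\Lambda(i), \Pi(i)$ appearing as composition factors: the indecomposable $\mathcal{P}^+_{p-1}$ should have a prescribed socle/radical filtration whose factors match exactly the graded pieces of the two lattice summands, and matching these term by term (likely via Euler--Poincaré or explicit character identities) is where the real work lies.
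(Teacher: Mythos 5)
Your part (i) follows essentially the paper's route: both lattice vectors have $L(0)$-weight $\frac{2-p}{4}$, $\widetilde{L(0)}=L(0)+\widetilde{Q}$, and $\widetilde{Q}e^{\alpha/2}=e^{\alpha/2-\alpha/p}$, $\widetilde{Q}e^{\alpha/2-\alpha/p}=0$ give the Jordan block. For $\N$-gradedness the paper checks directly that $\widetilde{X}(n)v=\widetilde{L(n+1)}v=0$ for $n\in\N$ and $v$ in the top component; your alternative (the $L(0)$-spectrum of $\widetilde{\mathcal{V}_1}$ is bounded below, with minimum $\frac{2-p}{4}$ attained exactly on these two vectors, and $\widetilde{Q}$ has degree zero so generalized $\widetilde{L(0)}$-eigenvalues equal $L(0)$-eigenvalues) is workable, though you should state that last identification explicitly.

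Part (ii) has a genuine gap. The equality $\mathcal{P}^+_{p-1}=\widetilde{\mathcal{V}_1}$ cannot be obtained by ``comparing graded dimensions of $\triplet\cdot e^{\alpha/2}$ with those of $\widetilde{\mathcal{V}_1}$'': the graded dimensions of the cyclic module under the \emph{deformed} action are precisely the unknown quantity, and there is no independent character formula for them. The structural obstruction you must overcome is that $V_{L+\alpha/2-\alpha/p}$, as an undeformed $\triplet$-module, is a non-split extension $0\to\Lambda(p-1)\to V_{L+\alpha/2-\alpha/p}\to\Pi(1)\to 0$ whose cyclic generator is the subsingular vector $e^{3\alpha/2-\alpha/p}$; the submodule you do reach from the top component, $\triplet\cdot e^{\alpha/2-\alpha/p}=\Lambda(p-1)$, is proper. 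The paper's proof therefore (a) reduces the whole claim to the single membership $e^{3\alpha/2-\alpha/p}\in\mathcal{P}^+_{p-1}$, using cyclicity of $V_{L+\alpha/2}$ on $e^{\alpha/2}$ and of $V_{L+\alpha/2-\alpha/p}$ on $e^{3\alpha/2-\alpha/p}$, and then (b) actually produces that vector: $\mathcal{P}^+_{p-1}$ is $G$-invariant by Lemma \ref{G-action}, it contains $e^{-\alpha/2}$, and an explicit residue computation gives $G^2e^{-\alpha/2}=2\nu_p\,Q\,e^{\alpha-\alpha/p}_{-1}e^{-\alpha/2}=C\,e^{3\alpha/2-\alpha/p}$ with $C\neq 0$. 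You invoke Lemma \ref{G-action} only to conclude that $G$ preserves the submodule, never to generate the missing subsingular vector, and your proposed fallback (Euler--Poincar\'e or character identities) does not substitute for step (b). Without it, nothing in your argument rules out that $\triplet\cdot e^{\alpha/2}$ is the proper submodule $\mathcal{N}_1$ of semisimple length two rather than all of $\widetilde{\mathcal{V}_1}$.
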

\begin{proof} By using direct calculation, one can see that
$$ \widetilde{X} (n) v = \widetilde{L(n+1)} v =0,
\quad \mbox{for} \ n \in {\N}
 $$
for $v \in \{ e^{\frac{\a}{2}}, e^{\frac{\a}{2} -\frac{\a}{p}}
\}$.
This implies that $\mathcal{P}^+_{p-1}$ is a ${\N}$--graded module
for $\triplet$.
Now the proof follows from:
\bea
&&\widetilde{L(0)} e^{\frac{\a}{2}} = L(0) e^{\frac{\a}{2}} +
\widetilde{Q}e^{\frac{\a}{2}}=  \frac{2-p}{4} e^{\frac{\a}{2}}
+ e^{\frac{\a}{2} -\frac{\a}{p}}, \nonumber \\
&&\widetilde{L(0)}e^{\frac{\a}{2} -\frac{\a}{p}} = L(0)
e^{\frac{\a}{2} -\frac{\a}{p}} = \frac{2-p}{4} e^{\frac{\a}{2}
-\frac{\a}{p}}.  \nonumber
 \eea
  Let us prove assertion (ii). First we recall the usual (undeformed) action of $\triplet$ on
$V_{L+\alpha/2}$. By using results from \cite{AdM-triplet}, one can
easily see that under this action $V_{L+\alpha/2}$ is a cyclic
module generated by $e^{\alpha/2}$. Moreover   $V_{ L + \a / 2 - \a
/p}$ is a cyclic module generated by $e^{3 /2 \a - \a /p}$. On the
other hand $\mathcal{W}(p) \cdot e^{\alpha/2-\alpha/p}=\Lambda(p-1)
\subset \mathcal{P}^+_{p-1}$. Thus, it is sufficient to show
$e^{3 /2 \a - \a /p} \in \mathcal{P}^+_{p-1}$.
Our Lemma \ref{G-action} directly implies that $\mathcal{P}^+_{p-1}$
is $G$-invariant.

Since $ e ^{-\a /2} \in \mathcal{P}^+_{p-1}$, we conclude that
$$ G^2 e^{-\a /2 } \in \mathcal{P}^+_{p-1}. $$
By direct calculation we have:
$$ G^2 e ^{-\a /2} = 2 \nu_p Q e ^{ \a - \a/p} _{-1} e ^{-\a /2 } = C e
^{ 3 /2 \a - \a /p }$$ for certain non-zero constant $C$. The
constant $C$ can be easily computed from the relation $$Q e ^{ \a -
\a/p} _{-1} e ^{-\a /2 }={\rm Res}_{x_1} {\rm Res}_{x_2} x_2^{-1}
Y(e^{\alpha},x_1)Y(e^{\alpha-\alpha/p},x_2)e^{-\alpha/2}$$ and the
explicit formulas $Y(e^{\alpha},x)$ and $Y(e^{\alpha-\alpha/p},x)$.
The proof follows.
\end{proof}

\vskip 5mm

Define the following submodules of $\mathcal{P}^+_{p-1}$:
\bea
{\mathcal M}_1  & = & {\triplet}. e^{ -\a /2}, \nonumber \\
{\mathcal N}_1 &=& {\triplet}. e^{-\a /2 } + {\triplet}. e ^{3 \a /2
- \a / p} = {\mathcal M}_1 + V_{ L + \a /2 - \a /p}, \nonumber \\
{\mathcal N}_2 &=& {\triplet}. e^{\a /2 - \a /p} \cong
\Lambda(p-1) . \nonumber
 \eea

\begin{lemma} \label{non-split}
We have
\begin{itemize}
\item[(i)]
The $\triplet$--module $\mathcal{M}_1=\mathcal{W}(p) \cdot e^{-\alpha/2}$ defines a non-split
extension
$$0 \rightarrow \Lambda(p-1) \rightarrow \mathcal{M}_1 \rightarrow \Pi(1) \rightarrow 0,$$
of modules $\Pi(1)$ and $\Lambda(p-1)$.
\item[(ii)] The modules $\mathcal{M}_1$ and $V_{L+ \a/2 - \a /p}$ are non-isomorphic.
\end{itemize}

\end{lemma}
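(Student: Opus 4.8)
The plan is to analyze $\mathcal{M}_1=\triplet\cdot e^{-\a/2}$ by comparing the deformed action $\widetilde{Y}$ on $\widetilde{\cV_1}$ with the ordinary action of $\overline{V_L}=\triplet$ on the two Fock cosets. First I would record the undeformed module structure: by the structure theory of the Fock/lattice modules in \cite{AdM-triplet}, both $V_{L+\a/2}$ and $V_{L+\a/2-\a/p}$ decompose, as $\triplet$-modules, into the irreducibles $\Lambda(p-1)$ (lowest weight $\tfrac{2-p}{4}$, generated by $e^{\a/2}$, resp. $e^{\a/2-\a/p}$) and $\Pi(1)$ (lowest weight $\tfrac{3p-2}{4}$, generated by $e^{-\a/2}$, resp. $e^{3\a/2-\a/p}$). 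In particular this reproves the stated identification $\mathcal{N}_2=\triplet\cdot e^{\a/2-\a/p}\cong\Lambda(p-1)$ and identifies the ordinary module generated by $e^{-\a/2}$ as a copy of $\Pi(1)$.

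The structural observation driving the proof is that the correction terms in $\widetilde{a_n}=a_n+\sum_{m\ge1}\tfrac{(-1)^m}{-m}(e^{-\a/p}_m a)_{n-m}$ (formula (\ref{act-def})) shift the $\a$-charge by $-\a/p$; hence they map the outer coset $V_{L+\a/2}$ into the inner coset $V_{L+\a/2-\a/p}$, while on $V_{L+\a/2-\a/p}$ itself the further shift leaves $\cV_1$ and the correction acts as zero. Consequently the deformed and undeformed actions coincide on the inner coset, and the linear projection $\pi:\widetilde{\cV_1}\to V_{L+\a/2}$ that kills the inner coset intertwines the deformed action on the source with the undeformed action on the target. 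Applying $\pi$ to $\mathcal{M}_1$ therefore yields the ordinary submodule $\triplet\cdot e^{-\a/2}\cong\Pi(1)$, producing a surjection $\mathcal{M}_1\twoheadrightarrow\Pi(1)$ whose kernel is $\mathcal{M}_1\cap V_{L+\a/2-\a/p}$.

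It then remains to identify this kernel with $\mathcal{N}_2\cong\Lambda(p-1)$. Since the two actions agree on the inner coset, the kernel is an ordinary $\triplet$-submodule of $V_{L+\a/2-\a/p}=\Lambda(p-1)\oplus\Pi(1)$. Using $\widetilde{L(0)}=L(0)+\widetilde{Q}$ (cf. Theorem \ref{main}) I obtain $\widetilde{Q}e^{-\a/2}=(\widetilde{L(0)}-\tfrac{3p-2}{4})e^{-\a/2}\in\mathcal{M}_1$, so the kernel is nonzero; a direct lattice computation of $e^{-\a/p}_0 e^{-\a/2}$ shows this vector is nonzero, lies in the $\Lambda(p-1)$ summand, and generates it (it descends to $e^{\a/2-\a/p}$). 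Finally I must rule out that $\mathcal{M}_1$ contains the other summand $\Pi(1)=\triplet\cdot e^{3\a/2-\a/p}$: reaching $e^{3\a/2-\a/p}$ from $e^{-\a/2}$ requires the extra screening $G$ (via the identity $G^2e^{-\a/2}=Ce^{3\a/2-\a/p}$ established in the proof of Theorem \ref{log-p1}), which does not belong to $\triplet$, so $e^{3\a/2-\a/p}\notin\mathcal{M}_1$. This yields the exact sequence $0\to\Lambda(p-1)\to\mathcal{M}_1\to\Pi(1)\to 0$.

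Non-splitness then follows from the non-semisimplicity of $\widetilde{L(0)}$ on $\mathcal{M}_1$: writing $w:=\widetilde{Q}e^{-\a/2}$, we have $\widetilde{L(0)}e^{-\a/2}=\tfrac{3p-2}{4}e^{-\a/2}+w$ with $w$ a nonzero vector of the submodule $\Lambda(p-1)$, and $\widetilde{Q}w=0$ (since $w$ lies in the inner coset, so the further $-\a/p$ shift exits $\cV_1$); thus $w$ is a genuine $\widetilde{L(0)}$-eigenvector and $\{e^{-\a/2},w\}$ spans a Jordan block. A split extension $\Lambda(p-1)\oplus\Pi(1)$ would be an ordinary module on which $\widetilde{L(0)}$ acts semisimply, a contradiction. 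For part (ii), $V_{L+\a/2-\a/p}$ is a lattice vertex algebra module, so its $L(0)$ is diagonalizable, whereas $\widetilde{L(0)}$ is non-semisimple on $\mathcal{M}_1$; as $\triplet$-modules they cannot be isomorphic. The main obstacle is the kernel identification in the third paragraph, namely the explicit verification that $e^{-\a/p}_0 e^{-\a/2}$ is nonzero with no $\Pi(1)$-component together with $e^{3\a/2-\a/p}\notin\mathcal{M}_1$, which rests on the precise submodule structure of the Fock cosets from \cite{AdM-triplet} and on lattice vertex-operator computations.
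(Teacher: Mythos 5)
Your structural observation about the deformation---that the correction terms in $\widetilde{a_n}$ shift the charge by $-\a/p$, hence map $V_{L+\a/2}$ into $V_{L+\a/2-\a/p}$, vanish on the inner coset, and make the projection onto the outer coset an intertwiner with the undeformed action---is correct and is indeed the right way to set up the exact sequence. But the step on which everything else in your argument rests is false: $\widetilde{Q}e^{-\a/2}=e^{-\a/p}_0e^{-\a/2}=0$. Since $\la -\a/p,-\a/2\ra=\tfrac{1}{2p}\la\a,\a\ra=+1$, the expansion $Y(e^{-\a/p},x)e^{-\a/2}=x^{+1}(e^{-\a/2-\a/p}+O(x))$ contains no $x^{-1}$ term, so the residue vanishes (contrast this with $\la -\a/p,\a/2\ra=-1$, which is why $\widetilde{Q}e^{\a/2}=e^{\a/2-\a/p}\neq 0$ and the Jordan block sits at the \emph{top} level $\mathcal{P}^+_{p-1}(0)$, of weight $\tfrac{2-p}{4}$, not at $e^{-\a/2}$). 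Consequently $\widetilde{L(0)}e^{-\a/2}=\tfrac{3p-2}{4}e^{-\a/2}$ exactly: there is no Jordan block on $\mathcal{M}_1$; in fact $\mathcal{N}_1\supset\mathcal{M}_1$ is precisely the $L(0)$-semisimple part of $\mathcal{P}^+_{p-1}$. This destroys both your proof that the kernel $\mathcal{M}_1\cap V_{L+\a/2-\a/p}$ is nonzero and your proof of non-splitness (a non-split extension of $\Pi(1)$ by $\Lambda(p-1)$ can perfectly well have semisimple $L(0)$, and this one does). The actual source of a nonzero vector of $\mathcal{M}_1$ in the inner coset is the deformed \emph{triplet} action, not $\widetilde{Q}$ itself: the paper shows $\widetilde{E(p-1)}e^{-\a/2}=Ce^{\a/2-\a/p}$ with $C\neq0$ by writing $\widetilde{E(p-1)}=(\mathrm{ad}\,G)^2(e^{-\a}_{3p-3})$ via Lemma \ref{G-action} and computing $G^2e^{-\a/2}$; this computation is the heart of the proof and has no substitute in your argument.

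Two further problems. First, your background claim that $V_{L+\a/2}$ and $V_{L+\a/2-\a/p}$ decompose as direct sums $\Lambda(p-1)\oplus\Pi(1)$ is wrong: both are \emph{non-split} extensions (e.g.\ $0\to\Lambda(p-1)\to V_{L+\a/2-\a/p}\to\Pi(1)\to0$, and $\triplet\cdot e^{3\a/2-\a/p}$ is the whole coset, not an irreducible summand); what survives of your claim is only that the cyclic modules generated by $e^{-\a/2}$ (undeformed) and by $e^{\a/2-\a/p}$ are the socles $\Pi(1)$ and $\Lambda(p-1)$ respectively. Second, your argument for (ii) collapses for the same reason as above: $\widetilde{L(0)}$ is semisimple on $\mathcal{M}_1$, and both modules are non-split extensions with identical composition factors and graded dimensions, so no $L(0)$-based invariant separates them. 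The paper instead argues that an isomorphism would have to send the Virasoro highest-weight vector $e^{-\a/2}$ to a multiple of $e^{3\a/2-\a/p}$ and then checks that $\widetilde{F}_{3p-3}$ annihilates the former but not the latter. A correct completion of your approach would have to replace the $\widetilde{Q}e^{-\a/2}$ computation by the $\widetilde{E(p-1)}$ computation and replace both ``non-semisimplicity'' arguments by genuinely module-theoretic ones.
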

\begin{proof}  We clearly have $\mathcal{N}_2 \subset
V_{L+\a/2-\a/p}$.
From Lemma \ref{G-action}, we compute \be \widetilde{E({p-1})}
e^{-\alpha/2} =\widetilde{(Q^2 e^{-\alpha})}_{3p-3}
e^{-\alpha/2}=(ad \ G)^2(e^{-\alpha}_{3p-3}) \cdot e^{-\alpha/2}.
\ee But $$e^{-\alpha}_{3p-3} e^{-\alpha/2}=e^{-\alpha}_{3p-3} G
e^{-\alpha/2}=0,$$ and thus
$$\widetilde{E(p-1)}
e^{-\alpha/2}=\widetilde{F (p-1)}G^2 e^{-\alpha/2}=C
e^{\alpha/2-\alpha/p},$$ for some $C \neq 0$. This implies that
$\mathcal{N}_2 \subset \mathcal{M}_1$.

To prove the assertion it is sufficient to show that:
\bea \label{uvjet2} &&  V_{ L + \a / 2- \a /p} \cap \mathcal{M}_1
= \mathcal{N}_2. \eea

 By using Lemma
\ref{G-action} again we prove:
$$ \widetilde{E(0)} e^{-\a /2} = -2  { 3p -2 \choose 2p-1} Q
e^{-\a /2} + w$$
where $$w = -2 \nu_p { 3p -2 \choose 2p-1} e ^{\a - \a /p}_{-1}
e^{-\a/2} +  \widetilde{F(0)}  G ^2 e ^{-\a/2}. $$
Since $Q w = 0$ we conclude that $w$ is not a subsingular vector
in $V_{ L + \a /2 - \a /p}$ which gives  that $w \in {\mathcal
N}_2$ (here we use the structure of $V_{ L + \a /2 - \a /p}$ as a
module for the Virasoro algebra from \cite{AdM-triplet}).

We also have:

$$\widetilde{H(0)} e^{-\a /2} = -{ 3p -2 \choose 2p-1} e^{-\a/2},
\quad \widetilde{H(0)} Q e^{-\a /2} = { 3p -2 \choose 2p-1} Q
e^{-\a/2} -\frac{1}{2} w. $$

Therefore the top component of the quotient module
$\mathcal{M}_1 / \mathcal{N}_2$ is of dimension two.
Assume that $V_{ L + \a / 2- \a /p} \cap \mathcal{M}_1 \supsetneqq
\mathcal{N}_2 $. Then clearly $$V_{ L + \a / 2- \a /p} \cap
\mathcal{M}_1 = V_{ L + \a / 2- \a /p}  $$ which implies that the
top component of $\mathcal{M}_1 / \mathcal{N}_2$ is
$4$-dimensional. A contradiction.
 Therefore (\ref{uvjet2}) holds. This implies that the quotient
module $\mathcal{M}_1 / \mathcal{N}_2$ is isomorphic to $\Pi(1)$.

For (ii) we only have to show that  $V_{L+\alpha/2-\alpha/p}
\ncong \mathcal{M}_1$. This is done by contradiction. Suppose
there is an isomorphism from $\mathcal{M}_1$ to
$V_{L+\alpha/2-\alpha/p}$. Since $e^{-\alpha/2}$ is a highest
weight vector for the Virasoro algebra this vector has to be
mapped to another highest weight vector in
$V_{L+\alpha/2-\alpha/p}$ of the same conformal weight. Such a
vector is unique up to a scalar and proportional to $e^{3
\alpha/2-\alpha/p}$. But  then $\widetilde{F}_{3p-3} e^{3
\alpha/2-\alpha/p}=e^{\alpha/2-\alpha/p}$, while
$\widetilde{F}_{3p-3}e^{-\alpha/2}=0$, a contradiction.
\end{proof}

The module $\mathcal{M}_1$ in the previous lemma was also predicted in  \cite{FHST}  (see also \cite{FGST-triplet}).

\begin{proposition} \label{filt-3}
The  socle part of the  $\triplet$--module $\mathcal{P}^+_{p-1}$ is
isomorphic to  $ \Lambda(p-1)$. The module $\mathcal{P}^+_{p-1}$ has
semisimple length three  with  the following  semisimple filtration:
\bea \label{filt-1} && \mathcal{P}^+_{p-1} ={\mathcal N}_0 \supset
{\mathcal N}_1 \supset {\mathcal N}_2 \supset {\mathcal N}_3 =0,
\eea
such that
$$ {\mathcal N}_0  / {\mathcal N}_1 \cong \Lambda (p-1), \quad
{\mathcal N}_1 / {\mathcal N}_2 \cong \Pi(1) \oplus \Pi(1), \quad
{\mathcal N}_2  / {\mathcal N}_3 \cong \Lambda (p-1).
$$

\end{proposition}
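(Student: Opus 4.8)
The plan is to establish the semisimple filtration by assembling the structural information already gathered in Lemma~\ref{non-split} and Theorem~\ref{log-p1}, and then to pin down the socle and the three composition factors at the top. First I would recall from Theorem~\ref{log-p1} that $\mathcal{P}^+_{p-1}=\widetilde{\mathcal{V}_1}$ is an $\N$--graded logarithmic module whose top component is two-dimensional, and that by part~(ii) it is $G$--invariant. The chain $\mathcal{N}_0 \supset \mathcal{N}_1 \supset \mathcal{N}_2 \supset \mathcal{N}_3=0$ is already defined, with $\mathcal{N}_2 \cong \Lambda(p-1)$ by construction and $\mathcal{N}_2 \subset \mathcal{N}_1$ and $\mathcal{N}_2 \subset \mathcal{M}_1$ established in Lemma~\ref{non-split}. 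So the skeleton of the filtration is in place; the work is to identify the three quotients as semisimple modules of the asserted isomorphism type.

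Next I would compute the three successive quotients. For $\mathcal{N}_2/\mathcal{N}_3 = \mathcal{N}_2 \cong \Lambda(p-1)$ there is nothing further to do. For $\mathcal{N}_1/\mathcal{N}_2$ I would use the two non-split extensions: by Lemma~\ref{non-split}(i), $\mathcal{M}_1/\mathcal{N}_2 \cong \Pi(1)$, and since $\mathcal{N}_1 = \mathcal{M}_1 + V_{L+\a/2-\a/p}$ with $V_{L+\a/2-\a/p}/\mathcal{N}_2$ also isomorphic to $\Pi(1)$ (its quotient by the submodule $\mathcal{N}_2 = \Lambda(p-1)$), the quotient $\mathcal{N}_1/\mathcal{N}_2$ is a sum of two copies of $\Pi(1)$; I would verify it is the \emph{direct} sum $\Pi(1)\oplus\Pi(1)$ by checking that the images of $\mathcal{M}_1$ and $V_{L+\a/2-\a/p}$ intersect trivially modulo $\mathcal{N}_2$, which is exactly the content of the equality $V_{L+\a/2-\a/p}\cap\mathcal{M}_1 = \mathcal{N}_2$ proved as (\ref{uvjet2}). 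The non-isomorphism in Lemma~\ref{non-split}(ii) guarantees these two $\Pi(1)$ summands arise from genuinely different submodules and hence the sum is semisimple of length two. Finally for $\mathcal{N}_0/\mathcal{N}_1$, I would show that the top-level vectors $e^{\a/2}$ and $e^{\a/2-\a/p}$ project to generate a copy of $\Lambda(p-1)$ modulo $\mathcal{N}_1$, using the $\widetilde{L(0)}$--action computed in Theorem~\ref{log-p1}(i) together with the highest-weight structure.

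For the socle, the plan is to show that $\Lambda(p-1)=\mathcal{N}_2$ is the unique minimal nonzero submodule, i.e.\ that the socle is simple and equal to $\mathcal{N}_2$. I would argue that any irreducible submodule must be generated by a singular vector, and then use the explicit Virasoro structure of $V_{L+\a/2-\a/p}$ from \cite{AdM-triplet} together with the action formulas for $\widetilde{E},\widetilde{H},\widetilde{F}$ from Lemma~\ref{G-action} to locate all singular vectors; the logarithmic (non-semisimple $\widetilde{L(0)}$) nature of the top forces any such vector into $\mathcal{N}_2$. The relations $\widetilde{E(p-1)}e^{-\a/2}=C\,e^{\a/2-\a/p}$ and $\widetilde{F}_{3p-3}e^{3\a/2-\a/p}=e^{\a/2-\a/p}$ already derived show that both generators $e^{-\a/2}$ and $e^{3\a/2-\a/p}$ of the middle layer flow down into $\mathcal{N}_2$ under the algebra action, so no irreducible summand can sit strictly above $\mathcal{N}_2$ without meeting it.

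The main obstacle I anticipate is verifying that $\mathcal{N}_1/\mathcal{N}_2$ is semisimple (a genuine direct sum of two copies of $\Pi(1)$) rather than a further non-split self-extension of $\Pi(1)$, and dually that the socle is \emph{exactly} $\mathcal{N}_2$ with no hidden simple submodule inside the $\Pi(1)\oplus\Pi(1)$ layer. This requires a careful bookkeeping of singular vectors in the middle graded component and leans heavily on the intersection identity (\ref{uvjet2}) and the non-isomorphism of $\mathcal{M}_1$ with $V_{L+\a/2-\a/p}$; the delicate point is ruling out extra singular vectors using the precise binomial coefficients appearing in the $\widetilde{H(0)}$ and $\widetilde{E(0)}$ formulas of Lemma~\ref{non-split}, so that the length-three semisimple filtration is forced and cannot be refined or rearranged.
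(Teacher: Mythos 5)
Your proposal follows essentially the same route as the paper: the middle layer $\mathcal{N}_1/\mathcal{N}_2\cong\Pi(1)\oplus\Pi(1)$ is obtained from the two extensions in Lemma \ref{non-split} together with the intersection identity (\ref{uvjet2}), and the socle is pinned down using the Jordan block of $\widetilde{L(0)}$ on the two-dimensional top component. The paper's socle argument is slightly cleaner than your singular-vector bookkeeping: it shows that \emph{every} nonzero submodule $W$ meets $\mathcal{P}^+_{p-1}(0)$ nontrivially, and then the nilpotent part of $\widetilde{L(0)}$ forces $e^{\a/2-\a/p}\in W$, hence $\mathcal{N}_2\subseteq W$; this handles arbitrary submodules at once, whereas your version only explicitly treats submodules generated in the top and middle layers, so you would still need to argue that a submodule generated deeper in the grading climbs back up to degree zero. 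One overstatement: the non-isomorphism $\mathcal{M}_1\ncong V_{L+\a/2-\a/p}$ from Lemma \ref{non-split}(ii) is not what makes the middle layer split --- the equality $V_{L+\a/2-\a/p}\cap\mathcal{M}_1=\mathcal{N}_2$ alone gives the direct sum, and two isomorphic submodules could equally well intersect trivially modulo $\mathcal{N}_2$.

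The one genuine omission is the minimality of the length. Exhibiting the filtration (\ref{filt-1}) only shows the semisimple length is \emph{at most} three; to conclude it equals three you must rule out a semisimple filtration of length two. Since the socle is the simple module $\mathcal{N}_2$, any length-two filtration would force $\mathcal{P}^+_{p-1}/\mathcal{N}_2$ to be semisimple, and the paper disposes of this in one line by observing that $\mathcal{P}^+_{p-1}/\mathcal{N}_2$ is not semisimple (a semisimple quotient would split off a copy of $\Lambda(p-1)$ whose preimage contains all of $\mathcal{P}^+_{p-1}(0)$ and hence, by cyclicity of $e^{\a/2}$, equals the whole module). Your closing remark that the filtration ``cannot be refined or rearranged'' gestures at this but does not supply the argument.
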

\begin{proof} Let $W$ be a non-zero submodule of
$\mathcal{P}^+_{p-1}$. By using  action of  $\triplet$ on vectors
from $W$, one sees that
$$ W \cap \mathcal{P}^+_{p-1} (0) \ne \{0\}.$$
Then action of $\widetilde{L(0)}$ on the  top level
$\mathcal{P}^+_{p-1} (0)$ implies that $e^{ \a /2 - \a /p} \in W$.
In this way we have proved that $W$ contains  ${\mathcal N}_2$.
Since $W$ is an arbitrary submodule, we conclude that  the socle
part of $\mathcal{P}^+_{p-1}$ is ${\mathcal N}_2 \cong
\Lambda(p-1)$.

Since  $\mathcal{P}^+_{p-1} / {\mathcal N}_2$ is not semisimple
$\triplet$--module, there are no semisimple filtration of length
two.

In order to prove that (\ref{filt-1}) is a semisimple filtration
of length three, it suffices to show that ${\mathcal N}_1 /
{\mathcal N}_2 $ is semisimple. First we notice that
$V_{ L + \a /2 - \a /p} / \mathcal{N}_2 \cong \Pi(1)$ (cf.
\cite{AdM-triplet}) and $\mathcal{M}_1 / \mathcal{N}_2 \cong
\Pi(1)$ (cf. Lemma \ref{non-split})  which easily implies that
$$(V_{L + \a /2 - \a /p} / \mathcal{N}_2 ) \bigcap \mathcal{M}_1 /
\mathcal{N}_2= \{ 0 \}.$$
In this way we have obtained that
$${\mathcal N}_2 / {\mathcal N}_1 = ( {\mathcal M}_1 + V_{L + \a /2 - \a /p} ) / {\mathcal N}_2 \cong \Pi(1) \oplus \Pi(1). $$

So filtration (\ref{filt-1}) is a semisimple filtration  of
minimal length, and therefore $\mathcal{P}^+_{p-1}$ has semisimple
length three.
\end{proof}

%
%

%
%
%
%
%
%
%
%


In the proof of Proposition  \ref{filt-3} we have shown that every
non-zero submodule of ${\mathcal P}_{p-1} ^+$ intersects top
component non-trivially. Thus, ${\mathcal P}_{p-1} ^+$ is precisely
the $\triplet$--module induced from  the two-dimensional module
${\mathcal P}_{p-1} ^+(0)$ for Zhu's algebra $A(\triplet)$.

\begin{corollary}
Assume that $M$ is a ${\N}$--graded $\triplet$--module such that:
\item[(i)] The top component $M(0)$ is isomorphic to
$\mathcal{P}^+_{p-1}(0)$ as a module for the Zhu's algebra
$A(\triplet)$.

\item[(ii)]  If  $W$ is a non-zero submodule  of $M$, then  $ W
\cap M(0) \ne \{0 \}$.

Then we have $W \cong {\mathcal P}_{p-1} ^+$. In particular, the
module ${\mathcal P}_{p-1} ^{+}$ is self-dual.

\end{corollary}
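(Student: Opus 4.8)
The plan is to read the statement as a rigidity property of Zhu's induced (generalized Verma) module, and to prove it in two stages: first that any $M$ satisfying (i) and (ii) is isomorphic to $\mathcal{P}^+_{p-1}$, and then to deduce self-duality by checking that the contragredient module $(\mathcal{P}^+_{p-1})'$ is itself such an $M$.

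For the first stage I would use the observation, recorded in the paragraph preceding the statement, that $\mathcal{P}^+_{p-1}$ is the $\triplet$--module induced from the two-dimensional $A(\triplet)$--module $U := \mathcal{P}^+_{p-1}(0)$; in particular it carries the universal property of the induced module. Hypothesis (i) supplies an isomorphism $f\colon U \to M(0)$ of $A(\triplet)$--modules, which by that universal property extends uniquely to a $\triplet$--homomorphism $\Phi\colon \mathcal{P}^+_{p-1} \to M$ restricting to $f$ on top components. The injectivity of $\Phi$ is then immediate: $\ker\Phi$ is a submodule of $\mathcal{P}^+_{p-1}$ whose intersection with the top component equals $\ker f = \{0\}$, so by the property established in the proof of Proposition \ref{filt-3} (every non-zero submodule of $\mathcal{P}^+_{p-1}$ meets its top component) we conclude $\ker\Phi = \{0\}$. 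Hence $M_0 := \Phi(\mathcal{P}^+_{p-1}) \cong \mathcal{P}^+_{p-1}$ is a submodule of $M$ with $M_0(0) = M(0)$.

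The surjectivity of $\Phi$ is the step I expect to be the main obstacle. Hypothesis (ii) shows that $M_0$ is an \emph{essential} submodule of $M$: any non-zero submodule $W \subseteq M$ satisfies $W \cap M(0) \ne \{0\}$, and since $M(0) \subseteq M_0$ this gives $W \cap M_0 \ne \{0\}$. By Proposition \ref{filt-3} the socle of $\mathcal{P}^+_{p-1}$ is the simple module $\Lambda(p-1)$, so $M_0 \cong \mathcal{P}^+_{p-1}$ is an essential extension of $\Lambda(p-1)$, and therefore so is $M$. To finish I would invoke the structural fact that $\mathcal{P}^+_{p-1}$, being the projective--injective cover with simple socle $\Lambda(p-1)$, is its own injective hull and thus admits no proper essential extension inside the category of $\N$--graded $\triplet$--modules with finite-dimensional graded pieces; granting this (where $C_2$--cofiniteness and finite length are essential) forces $M = M_0 \cong \mathcal{P}^+_{p-1}$. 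This injectivity/maximal-essential-extension input is where I expect the real work to lie, since (ii) for $M$ by itself does not prevent enlarging $M_0$ in strictly positive degrees; indeed, dualizing, (ii) for $M$ is equivalent to $M'$ being generated by its top component, not to $M$ being so.

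For the self-duality assertion I would apply the first stage to $M = (\mathcal{P}^+_{p-1})'$. Its top component is the contragredient of the two-dimensional module $U$ on which $\widetilde{L(0)}$ acts by a single Jordan block (Theorem \ref{log-p1}); since such a module is isomorphic to its transpose and the relevant anti-automorphism fixes the class of $\omega$, we get $(\mathcal{P}^+_{p-1})'(0) \cong U$ as $A(\triplet)$--modules, so (i) holds. For (ii) I would use the inclusion-reversing duality between submodules of $(\mathcal{P}^+_{p-1})'$ and submodules of $\mathcal{P}^+_{p-1}$: a non-zero submodule of $(\mathcal{P}^+_{p-1})'$ meeting its top trivially would correspond to a proper submodule $N \subsetneq \mathcal{P}^+_{p-1}$ with $N(0) = \mathcal{P}^+_{p-1}(0)$, which is impossible because $\mathcal{P}^+_{p-1} = \triplet \cdot e^{\alpha/2}$ is generated by a vector of its top component (Theorem \ref{log-p1}). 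Thus $(\mathcal{P}^+_{p-1})'$ satisfies (i) and (ii), and the first stage yields $(\mathcal{P}^+_{p-1})' \cong \mathcal{P}^+_{p-1}$.
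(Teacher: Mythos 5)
Your overall strategy --- identify $\mathcal{P}^+_{p-1}$ with the module that Zhu's theory attaches to the two-dimensional $A(\triplet)$--module $U=\mathcal{P}^+_{p-1}(0)$, compare $M$ with $\mathcal{P}^+_{p-1}$ via a map determined on top components, and obtain self-duality by feeding the contragredient back into the first assertion --- is essentially the paper's, which compresses the first assertion into the single sentence ``follows from the theory of Zhu's algebras and the above discussion.'' Your injectivity argument for $\Phi$ (via the fact, from the proof of Proposition \ref{filt-3}, that every non-zero submodule of $\mathcal{P}^+_{p-1}$ meets the top) and your verification of hypotheses (i) and (ii) for the contragredient are correct and more explicit than what is printed.

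The genuine gap is exactly where you place it: surjectivity of $\Phi$. You propose to close it by asserting that $\mathcal{P}^+_{p-1}$ is its own injective hull, so that it admits no proper essential extension. That input is not available: the paper only \emph{conjectures} that $\mathcal{P}^+_{p-1}$ is a projective cover, and its injectivity would follow only from the conjectural equivalence with $\overline{\mathcal{U}}_q(sl_2)$--modules. As written, your first stage therefore rests on an unproven hypothesis. The way to close it, consistent with the paper's appeal to Zhu theory, is to run your own duality observation in the opposite direction: hypothesis (ii) for $M$ says precisely that $M'$ is generated by its top component $M(0)^*\cong U$, hence $M'$ is a quotient of the induced (generalized Verma) module on $U$, which the paragraph preceding the corollary identifies with $\mathcal{P}^+_{p-1}$ itself. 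The kernel of the resulting surjection $\mathcal{P}^+_{p-1}\twoheadrightarrow M'$ is a graded submodule meeting the top component trivially (the tops match by (i)), hence is zero by essentiality; thus $M'\cong\mathcal{P}^+_{p-1}$ and $M\cong(\mathcal{P}^+_{p-1})'\cong\mathcal{P}^+_{p-1}$, the last step being the self-duality, which your argument does establish correctly (for $M=(\mathcal{P}^+_{p-1})'$ your $\Phi$ is automatically onto, since that module is generated by its top, so no injectivity is needed there). Note that this route trades your injectivity assumption for the identification of $\mathcal{P}^+_{p-1}$ with the \emph{full} induced module rather than merely its smallest quotient with the given top; that identification is what the paper's preceding paragraph asserts and is the real content behind ``the theory of Zhu's algebras'' in its one-line proof.
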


\begin{proof} The first assertion follows from the theory of Zhu's
algebras and the above discussion. Since  the contragradient
module for ${\mathcal P}_{p-1} ^+$ also has the  top component
isomorphic to ${\mathcal P}_{p-1} ^+(0)$, we easily conclude that
${\mathcal P}_{p-1} ^+$ is a self-dual $\triplet$--module. \end{proof}

\vskip 5mm

 The other logarithmic $\mathcal{W}(p)$-module
$\widetilde{\mathcal{V}}$, which will be denoted by
$\mathcal{P}^-_{p-1}$, has a slightly different structure as
illustrated in our next result (the proof is analogous to the one
in the previous proposition).
\begin{proposition}  We have:
\begin{itemize}
\item[(i)] The module $\mathcal{P}^-_{p-1}$ is of semisimple length three and its socle part is $\Pi(p-1)$.
Moreover, $\mathcal{P}^-_{p-1}$ admits the semisimple filtration
\bea \label{filt-2} && \mathcal{P}^ - _{p-1} ={\mathcal N}^{-} _0
\supset {\mathcal N}^{-} _1 \supset {\mathcal N}^{-} _2 \supset
{\mathcal N}^{-} _3 =0, \eea
such that
$$ {\mathcal N}^{-} _0  / {\mathcal N}^{-} _1 \cong \Pi (p-1), \quad
{\mathcal N}^{-} _1 / {\mathcal N}^{-} _2 \cong \Lambda(1) \oplus
\Lambda(1), \quad {\mathcal N}^{-} _2  / {\mathcal N}^{-} _3 \cong
\Pi (p-1).
$$
\item[(ii)] The first nontrivial $\widetilde{L(0)}$-Jordan block of $\mathcal{P}^-_{p-1}$ opens at the degree one
(i.e., there exists $v \in \mathcal{P}^-_{p-1}(1)$ such that
$\widetilde{L(0)}v=v+w$, where $ w \neq 0$ and
$\widetilde{L(0)}w=w$).
\item[(iii)] If we let  $\widetilde{Y}'( \cdot,
x)=\widetilde{Y}(\Delta(\frac{\alpha(0)}{2},x) \cdot,x)$ , then
$(\widetilde{\mathcal{V}}, \widetilde{Y}'( \cdot,x))$ is
isomorphic to $\widetilde{\mathcal{V}_1}$.
\end{itemize}
\end{proposition}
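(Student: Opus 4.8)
I will handle the three assertions in turn, modeling (i) on the proof of Proposition~\ref{filt-3} and reducing (iii) to the characterization recorded in the Corollary following that proposition.

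For (i), I would first note that the underlying space of $\mathcal{P}^-_{p-1}=\widetilde{{\cV}}$ is ${\cV}_0=V_L\oplus V_{L-\a/p}$, that it is $\N$--graded with lowest conformal weight $0$, and that the entire argument is the mirror image of Proposition~\ref{filt-3} with the roles of the $\Lambda$'s and the $\Pi$'s interchanged. Concretely I would: first show that every nonzero $\triplet$--submodule $W$ contains a fixed irreducible piece isomorphic to $\Pi(p-1)$, using the screening $G$ of Lemma~\ref{G-action} together with the $\widetilde{L(0)}$--action to force a distinguished vector into $W$, thereby identifying the socle as $\mathcal{N}^-_2\cong\Pi(p-1)$; second, define $\mathcal{N}^-_1$ as the preimage of the socle of the quotient and prove $\mathcal{N}^-_1/\mathcal{N}^-_2\cong\Lambda(1)\oplus\Lambda(1)$ by exhibiting two submodules whose sum realizes this direct sum and whose embeddings into $\mathcal{P}^-_{p-1}$ are inequivalent, exactly as $\mathcal{M}_1$ and $V_{L+\a/2-\a/p}$ were distinguished in Lemma~\ref{non-split}(ii); third, check that $\mathcal{P}^-_{p-1}/\mathcal{N}^-_2$ is not semisimple, so that no semisimple filtration of length two exists and the length is exactly three, with $\mathcal{N}^-_0/\mathcal{N}^-_1\cong\Pi(p-1)$. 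Since each computation is the reflection of one already done for $\mathcal{P}^+_{p-1}$, the only real work is the bookkeeping of which irreducibles occur.

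For (ii), the key is that $\widetilde{L(0)}=L(0)+\widetilde{Q}$, where on ${\cV}_0$ the operator $\widetilde{Q}=e^{-\a/p}_0$ acts nilpotently, sending the $V_L$--summand into the $V_{L-\a/p}$--summand and annihilating the latter (by the prescription $Y_{{\cV}_0}(v,x)v_0=0$). As $\widetilde{Q}$ commutes with $L(0)$, the generalized $\widetilde{L(0)}$--eigenspaces coincide with the $L(0)$--eigenspaces. On the degree--$0$ component $\mathrm{span}_{\C}\{{\bf 1},e^{\a-\a/p}\}$ one has $\widetilde{Q}{\bf 1}=e^{-\a/p}_0{\bf 1}=0$ and $\widetilde{Q}e^{\a-\a/p}=0$, so $\widetilde{L(0)}$ is semisimple there. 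At degree $1$, using $e^{-\a/p}_0{\bf 1}=0$ and $[e^{-\a/p}_0,\a(-1)]=2\,e^{-\a/p}_{-1}$, I compute $\widetilde{Q}\bigl(\tfrac12\a(-1){\bf 1}\bigr)=e^{-\a/p}\neq 0$, while $\widetilde{L(0)}e^{-\a/p}=e^{-\a/p}$. Hence $v=\tfrac12\a(-1){\bf 1}$ satisfies $\widetilde{L(0)}v=v+w$ with $w=e^{-\a/p}\neq 0$ and $\widetilde{L(0)}w=w$, which is precisely the claim that the first nontrivial Jordan block opens at degree one.

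For (iii), set $u=\tfrac12\a(-1){\bf 1}\in{\cV}$, so that $u_0=\tfrac12\a(0)$ acts semisimply with integral eigenvalues on ${\cV}_0$ (namely $pn$ on the $e^{n\a}$ of $V_L$ and $pn-1$ on the $e^{n\a-\a/p}$ of $V_{L-\a/p}$); by the Remark following Theorem~\ref{gen-const-log} and the simple--current construction of \cite{Li-sc}, \cite{DLM-1996}, the operator $\Delta(u,x)=\Delta(\tfrac{\a(0)}{2},x)$ defines a simple--current functor on ${\cV}$--modules which shifts the underlying cosets by $\a/2$, carrying ${\cV}_0=V_L\oplus V_{L-\a/p}$ to ${\cV}_1=V_{L+\a/2}\oplus V_{L+\a/2-\a/p}$. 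Both the logarithmic deformation $D=\Delta(e^{-\a/p},x)$ and the simple--current twist $S=\Delta(u,x)$ are instances of Li's $\Delta$--construction, and the assertion $(\widetilde{{\cV}},\widetilde{Y}')\cong\widetilde{{\cV}_1}$ is exactly the statement that they may be interchanged on ${\cV}_0$, i.e. $S\circ D\cong D\circ S$, whence $(\widetilde{{\cV}},\widetilde{Y}')\cong D(S({\cV}_0))=D({\cV}_1)=\widetilde{{\cV}_1}$. I expect this interchange to be the main obstacle: it reduces to the identity $Y_{{\cV}_0}(\Delta(e^{-\a/p},x)\Delta(u,x)a,x)=Y_{{\cV}_0}(\Delta(u,x)\Delta(e^{-\a/p},x)a,x)$ for $a\in\triplet$, and since the modes of $e^{-\a/p}$ and of $\a(-1)$ do not commute, it must be verified by an explicit reordering of the two $\Delta$--operators, any nonzero scalar produced in the process being harmless by the Remark noting that $\Delta(\lambda e^{-\a/p},x)$ with $\lambda\neq0$ does not change the isomorphism type. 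Should a direct reordering prove awkward, one can instead identify $(\widetilde{{\cV}},\widetilde{Y}')$ via the Corollary following Proposition~\ref{filt-3}, by checking that it is $\N$--graded, that its top component is isomorphic as an $A(\triplet)$--module to $\mathcal{P}^+_{p-1}(0)$, and that every nonzero submodule meets this top component.
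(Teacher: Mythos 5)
The paper itself offers no proof of this proposition beyond the parenthetical remark that ``the proof is analogous to the one in the previous proposition,'' so for part (i) your mirror-image of Proposition \ref{filt-3} (socle forced by the $G$- and $\widetilde{L(0)}$-action, two inequivalently embedded copies of $\Lambda(1)$ in the middle layer, non-semisimplicity of $\mathcal{P}^-_{p-1}/\mathcal{N}^-_2$ ruling out length two) is exactly the intended argument. Your part (ii) is a correct and welcome explicit verification: indeed $\widetilde{Q}{\bf 1}=0$ and $\widetilde{Q}e^{\alpha-\alpha/p}=0$ in $\mathcal{V}_0$ (the latter by the prescription $Y_{\mathcal{V}_0}(v,x)v_0=0$), while $[e^{-\alpha/p}_0,\alpha(-1)]=-\langle\alpha,-\alpha/p\rangle e^{-\alpha/p}_{-1}=2e^{-\alpha/p}_{-1}$ gives $\widetilde{Q}\bigl(\tfrac12\alpha(-1){\bf 1}\bigr)=e^{-\alpha/p}\neq 0$ at degree one.

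The one genuine weak point is the primary route you propose for (iii). The obstruction to interchanging $S=\Delta(\tfrac{\alpha(0)}{2},x)$ and $D=\Delta(e^{-\alpha/p},x)$ is not a nonzero scalar: since $[\tfrac{\alpha(0)}{2},e^{-\alpha/p}_n]=-e^{-\alpha/p}_n$, conjugating the modes of $e^{-\alpha/p}$ past $x^{\alpha(0)/2}$ (and past the positive Heisenberg modes) produces powers of the \emph{formal variable} $x$ and shifts of mode indices, so the Remark about $\Delta(\lambda e^{-\alpha/p},x)$, $\lambda\in\mathbb{C}^*$, does not apply. Worse, $\Delta(\tfrac{\alpha(0)}{2},x)a$ for $a\in\mathcal{W}(p)$ need not stay in $\mbox{Ker}\,\widetilde{Q}$ (already $\Delta(\tfrac{\alpha(0)}{2},x)\omega$ contains $\tfrac12\alpha(-1){\bf 1}$, on which $\widetilde{Q}$ acts nontrivially), so the composite $D\circ S$ has to be handled as a single operator on $\mathcal{W}(p)$ rather than as two separately well-defined deformations; a naive reordering is therefore not just ``awkward'' but ill-posed as stated. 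Your fallback, however, is sound and is the argument I would keep: verify that $(\widetilde{\mathcal{V}},\widetilde{Y}')$ is $\mathbb{N}$-graded with two-dimensional top component on which $\widetilde{L(0)}'$ has a nontrivial Jordan block isomorphic to $\mathcal{P}^+_{p-1}(0)$ as an $A(\mathcal{W}(p))$-module, and that every nonzero submodule meets the top component; the Corollary following Proposition \ref{filt-3} then yields the isomorphism with $\widetilde{\mathcal{V}_1}$. I would recommend demoting the ``reordering'' discussion to a remark and promoting the Zhu-algebra identification to the actual proof of (iii).
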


\begin{remark} \label{conj-log}
Existence of certain logarithmic $\triplet$--modules was proven in
\cite{AdM-triplet} by using modular invariance and the theory of
Zhu's algebras. In Theorem  \ref{log-p1}  we have presented an
explicit realization of a logarithmic module from Proposition 6.6 of
\cite{AdM-triplet}.
\end{remark}

\begin{remark} In \cite{FGST-triplet},\cite{FGST-triplet2} (see also \cite{FHST}) it was conjectured that $\mathcal{W}(p)$ admits $2(p-1)$ indecomposable
(nonisomorphic) logarithmic modules. Although our method gives $2$ logarithmic $\mathcal{W}(p)$-modules for fixed $p$, there is
a strong evidence that remaining logarithmic modules can be also constructed  by using our method. The main difficulty here is to find a correct
replacement for the $Q$ operator in an appropriate extension of $V_L$. This is work in progress.
\end{remark}

\section{Indecomposable $\triplet$-modules and the quantum group \ $\overline{\mathcal{U}}_q(sl_2)$,  \  $q=e^{\pi i / p}$}

In this section discuss conjectural
equivalence between the category of finitely generated
$\triplet$-modules and the category of finite dimensional modules
for the restricted quantum group $\overline{\mathcal{U}}_q(sl_2)$, where $q$ is a primitive $2p$-th root of unity
(cf. \cite{FGST-triplet}, \cite{FGST-triplet2}).

We also discuss some
additional indecomposable $\triplet$-modules.

In \cite{AdM-triplet}, we decomposed the category of ordinary
$\triplet$-modules into $p+1$ blocks.  Two blocks
contain a single irreducible module, and remaining  $p-1$ blocks
contain two irreducible modules each. In fact, similar decomposition
persists at the level of weak $\triplet$-modules, and a proof can be
given along the lines of \cite{AdM-triplet} by analyzing Virasoro
algebra fusion. However, if $p$ is prime, because of Lemma 6.2
\cite{AdM-triplet} and the discussion after the lemma it is clear
that there are no (logarithmic) extensions between different blocks,
so no Virasoro fusion is needed. So let $\mathcal{O}$ be the
category of finitely generated weak $\triplet$-modules, and
$$\mathcal{O}={\bigoplus_{i=1}^{p+1}} \mathcal{O}[h_{i,1}]$$
the corresponding block decomposition.

In the previous section we discussed in more details two logarithmic
modules $\mathcal{P}^+_{p-1} \in {\rm Ob}(\mathcal{O}[h_{p-1,1}])$
and $\mathcal{P}^-_{p-1} \in {\rm Ob} (\mathcal{O}[h_{1,1}])$.
%

%
%
Notice that inside the logarithmic module  $\mathcal{P}^+_{p-1}$ we have
realized  the following non-trivial  extensions of
$\triplet$--modules:
\bea && 0 \rightarrow \Lambda(p-1) \rightarrow \mathcal{M}_1
\rightarrow \Pi(1) \rightarrow 0; \nonumber \\
&& 0 \rightarrow \Lambda(p-1) \rightarrow V_{L + \a /2 - \a /p}
\rightarrow \Pi(1) \rightarrow 0; \nonumber \\
&& 0 \rightarrow \Lambda(p-1) \rightarrow \mathcal{N}_1
\rightarrow \Pi(1) \oplus \Pi(1) \rightarrow 0;\nonumber \\
&& 0 \rightarrow \mathcal{N}_1  \rightarrow \mathcal{P}^+_{p-1}
\rightarrow \Lambda(p-1) \rightarrow 0.\nonumber \eea

 From this
we clearly obtain the following embedding structure for the module
$\mathcal{P}^+_{p-1}$ (a similar diagram can be drawn for
$\mathcal{P}^-_{p-1}$):
 \be \label{diamond}
\xymatrix@C=0.5pc{  &   & \stackrel{\Lambda(p-1)}{\bullet} \ar@/^/[dll] \ar@/_/[drr]   &  &
\\  \stackrel{\Pi(1)}{\bullet} \ar@/^/[rrd] & & &  & \stackrel{\Pi(1)}{\bullet} \ar@/_/[lld]
\\   & &  \stackrel{\Lambda(p-1)}{\bullet} & &
}
\ee
We find the structure of this logarithmic module identical to the
structure of the corresponding projective modules in the case of the
quantum group $\bar{\mathcal{U}}_q(sl_2)$, $q=e^{\pi i/p}$ \cite{FGST-triplet},
\cite{FGST-triplet2}.

From (\ref{diamond}) we also obtain the following indecomposable module (the so-called "wedge"):
\be \label{wedge}
\xymatrix@C=0.5pc{  &   & \stackrel{\Lambda(p-1)}{\bullet} \ar@/^/[dll] \ar@/_/[drr]   &  &
\\  \stackrel{\Pi(1)}{\bullet} & & &  & \stackrel{\Pi(1)}{\bullet}
}
\ee
simply by taking the obvious quotient of $\mathcal{P}^+_{p-1}$. But there is also another wedge
of the form:
\be \label{wedge-op}
\xymatrix@C=0.5pc{
 \stackrel{\Pi(1)}{\bullet} \ar@/^/[drr] & & &  & \stackrel{\Pi(1)}{\bullet}  \ar@/_/[dll] \\
 &   & \stackrel{\Lambda(p-1)}{\bullet}   &  &
}
\ee

We can now "glue" together several wedges (\ref{wedge}) to obtain:

\be \label{morewedge}
\xymatrix@C=0.5pc{  &   & \stackrel{\Lambda(p-1)}{\bullet} \ar@/^/[dll] \ar@/_/[drr]   &  &   &   & \ \  \cdots  \ \  \ar@/^/[dll] \ar@/_/[drr]   &  &   &   & \stackrel{\Lambda(p-1)}{\bullet} \ar@/^/[dll] \ar@/_/[drr]   &  &
\\  \stackrel{\Pi(1)}{\bullet} & & &  & \stackrel{\Pi(1)}{\bullet}   & & \ \ \cdots \ \ &  & \stackrel{\Pi(1)}{\bullet}  & & &  & \stackrel{\Pi(1)}{\bullet}
}
\ee

and similarly we get

\be \label{loop}
\xymatrix@C=0.5pc{
\stackrel{\Pi(1)}{\bullet}    \ar@/^/[drr]  & & &  & \stackrel{\Pi(1)}{\bullet}  \ar@/_/[dll] \ar@/^/[drr]  & & \ \ \cdots \ \ &  & \stackrel{\Pi(1)}{\bullet} \ar@/_/[dll] \ar@/^/[drr]   & & &  & \stackrel{\Pi(1)}{\bullet}  \ar@/_/[dll]   \\
 &   & \stackrel{\Lambda(p-1)}{\bullet}    &  &   &   & \ \  \cdots  \ \     &  &   &   & \stackrel{\Lambda(p-1)}{\bullet}    &  &
}
\ee


All representations considered here can be constructed explicitly.
For instance (\ref{wedge-op}) is obtained by taking the $L(0)$-semisimple part of $\mathcal{P}_{p-1}^+$ (cf. module $\mathcal{N}_1$ in the previous section).
The representation (\ref{diamond})  is the only logarithmic representation here and conjecturally
this is a projective cover of the irreducible module $\Lambda(p-1)$. The other modules considered here are clearly indecomposable.
We can of course obtain similar modules within the $\mathcal{O}[h_{1,1}]$ block, we only have to switch the roles of $\Lambda$ and $\Pi$.

Another interesting class of representations is obtained as submodules of $\mathcal{P}^+_{p-1}$, where there infinitely many nonisomorphic submodules
parameterized by $[x:y] \in \mathbb{C} \mathbb{P}^1$. These modules can be again constructed explicitly (we omit details here), and are in fact
different Baer sums of extensions obtained from $\mathcal{M}_1$ and $V_{L+\alpha/2-\alpha /p}$. These modules have embedding structure
\be \label{baer}
\xymatrix@C=0.5pc{  &   & \stackrel{\Pi(1)}{\bullet} \ar@/^/[d]^x  \ar@/_/[d]_y   &  &
\\  &&  \stackrel{\Lambda(p-1)}{\bullet} & &
}
\ee

This type of modules are known to exist on the quantum group side.


\begin{remark} The block $\mathcal{O}[h_{1,1}]$ is important for several reasons. Firstly, this is the principal block since it contains the "trivial" module $\mathcal{W}(p)$. But this block is also the "shortest" in the sense that two irreducible modules ($\triplet=\Lambda(1)$ and $\Pi(p-1)$ are at the "conformal distance" one). Thus analysis of
Zhu's algebra $A_1(\triplet)$ ought to be sufficient to classify all indecomposable modules in the block. We plan to study this problem in our future work.
\end{remark}

\section{Logarithmic modules for $\striplet$ and $\slogmin$}

As in the previous section we can now construct logarithmic modules
for the super-triplet vertex algebra $\striplet$ and related extensions of $N=1$ superconformal minimal models $\slogmin$.
To the best of our knowledge, the algebra $\slogmin$ has not been previously studied, so here we give its definition and a few basic  properties omitting proofs that are identical as those in \cite{AdM-striplet}.

Let $p > p'$ be positive integers where $(p,\frac{p-p'}{2})=1$. Consider the rank one lattice $\mathbb{Z} \alpha$, with $\langle \alpha,\alpha \rangle =pp'$, and the vector space$V_L=M(1) \otimes \mathbb{C}[L]$, where $M(1)$ is as before.
We have the parity decomposition $V_L=V_L^0  \oplus V_L^1$ in the standard way, and verify that this decomposition is compatible with the (super)bracket structure giving a vertex superalgebra structure on $V_L$. Then we form the vertex superalgebra $V_L \otimes F$, where $F$ is the free fermion vertex superalgebra generated by the field $\phi(x)=\sum_{n \in \mathbb{Z}} \phi(n) x^{-n-1/2}$.

We introduce (super)conformal structure on $V_L \otimes F$ by
choosing superconformal vector $\tau$ and conformal vector $\omega$ as follows:
$$\tau=\frac{\alpha(-1){\bf 1}}{\sqrt{pp'}} \otimes \phi(-1/2){\bf 1}+\frac{p-p'}{\sqrt{pp'}} {\bf 1} \otimes \phi(-3/2){\bf 1},$$
and
$$\omega=\frac{\alpha(-1)^2 {\bf 1}}{2pp'} \otimes {\bf 1} + (p-p')\frac{\alpha(-2){\bf 1}}{2pp'} \otimes {\bf 1}+  \frac{1}{2} \left( {\bf 1} \otimes \phi(-3/2)\phi(-1/2){\bf 1}\right).$$

It is easy to check that $Y(\tau,x)$ and $Y(\omega,x)$ close the $N=1$ Neveu-Schwarz superconformal algebra of central charge $\frac{3}{2}(1-2\frac{(p-p')^2}{pp'})$, and thus $V_L \otimes F$ is an $N=1$
vertex operator superalgebra. If we denote the eigenvalue of  $L(0)$ operator by $wt(v)$ then simple computation shows
$$wt(e^{l \alpha})=\frac{l^2 p p'-l(p-p')}{2}.$$ Now, let $L^{\circ}$ denote the dual lattice and $V_{L_0}$ generalized vertex operator superalgebra.
Therefore $e^{\alpha/p'} \in \mathbb{C}[L^\circ]$ and $e^{-\alpha/p} \in
\mathbb{C}[L^\circ]$ are odd vectors of weight $\frac{1}{2}$, and hence
$e^{\alpha/p'} \otimes \phi(-1/2){\bf 1}$ and $e^{-\alpha/p} \otimes
\phi(-1/2){\bf 1}$ are even vectors of weight one. It is not hard to
show (see \cite{AdM-striplet}) that the screening operators
$$Q={\rm Res}_{x} Y(e^{\alpha/p'} \otimes \phi(-1/2){\bf 1},x), \ \  \widetilde{Q}={\rm Res}_{x} Y(e^{-\alpha/p} \otimes \phi(-1/2){\bf 1},x)$$
commute with the action of the $N=1$ superconformal algebra.  We now define

$$\slogmin:= {\rm Ker}_{V_L \otimes F} (Q) \cap {\rm Ker}_{V_L \otimes F}(\widetilde{Q}).$$

From the previous discussion  we obtain
\begin{proposition}
We have
$$V_{1/2}(c_{p,p'},0) \subset  \slogmin, $$
thus $\slogmin$ is an $N=1$ superconformal vertex algebra.
\end{proposition}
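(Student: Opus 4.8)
The statement I need to establish is the inclusion $V_{1/2}(c_{p,p'},0) \subset \slogmin$, where $\slogmin$ is the joint kernel of the two screening operators $Q$ and $\widetilde{Q}$ inside $V_L \otimes F$. The whole argument rests on one fact established in the preceding discussion: both $Q$ and $\widetilde{Q}$ commute with the action of the $N=1$ Neveu-Schwarz superconformal algebra, i.e. with the modes of $Y(\tau,x)$ and $Y(\omega,x)$. The plan is to observe that this commutativity is exactly what we need to conclude that the superconformal subalgebra lands in $\mbox{Ker}(Q) \cap \mbox{Ker}(\widetilde{Q})$.

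First I would note that the vacuum ${\bf 1} \in V_L \otimes F$ lies in the joint kernel, since $Q$ and $\widetilde{Q}$ are residues of vertex operators and annihilate the vacuum. Next, because $Q$ and $\widetilde{Q}$ commute with $\tau_{n}$ and $\omega_{m}$ for all $n,m$, the joint kernel $\mbox{Ker}(Q) \cap \mbox{Ker}(\widetilde{Q})$ is invariant under the superconformal modes: if $w$ is in the joint kernel then $Q(\tau_n w)=\tau_n(Qw)=0$ and likewise for $\widetilde{Q}$ and for $\omega_m$. Applying the superconformal modes repeatedly to ${\bf 1}$ therefore stays inside the joint kernel. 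The subalgebra of $V_L \otimes F$ generated by ${\bf 1}$ under the action of all modes $\tau_n$, $\omega_m$ is precisely the image of the universal Neveu-Schwarz vertex superalgebra $V_{1/2}(c_{p,p'},0)$ of the appropriate central charge, which by the earlier computation equals $\frac{3}{2}\bigl(1-2\frac{(p-p')^2}{pp'}\bigr)$; this image is contained in the joint kernel, giving the inclusion. Since $\slogmin$ is by definition this joint kernel, the final clause that $\slogmin$ is an $N=1$ superconformal vertex algebra follows: it contains the superconformal vectors $\tau$ and $\omega$ (each individually annihilated by both screenings), so it is closed under the superconformal structure inherited from $V_L \otimes F$.

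The main obstacle here is not the logical skeleton, which is routine, but the input that $Q$ and $\widetilde{Q}$ actually commute with the superconformal algebra; that verification is the substantive point and is precisely what the excerpt defers to \cite{AdM-striplet} with the remark that the proofs are identical. Concretely, one checks that $e^{\alpha/p'} \otimes \phi(-1/2){\bf 1}$ and $e^{-\alpha/p} \otimes \phi(-1/2){\bf 1}$ are even weight-one vectors whose zero modes (super)commute with $\tau_n$ and $\omega_m$; this is a computation with the commutator formula for modes of vertex operators, controlling the singular terms in the operator product of $\tau$ (resp. $\omega$) with these screening currents. Granting that lemma, the proposition is immediate, and I would present it in just a few lines as above.
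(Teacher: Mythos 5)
Your proposal is correct and matches the paper's intent: the paper gives no separate proof, simply stating that the proposition follows ``from the previous discussion,'' namely that $Q$ and $\widetilde{Q}$ commute with the $N=1$ superconformal algebra (a fact deferred to \cite{AdM-striplet}) and that $\slogmin$ is by definition the joint kernel. Your writeup just makes explicit the routine steps (vacuum in the kernel, kernel invariant under the superconformal modes) that the paper leaves implicit, and you correctly identify the commutation of the screenings with $\tau_n$ and $\omega_m$ as the one substantive input.
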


We expect the vertex superalgebra $\slogmin$ to be $C_2$-cofinite. By using the construction analogous to the
one for $\logmin$ we can show that each $\slogmin$ admits logarithmic modules.

\begin{proposition}
\item[(i)] For  $i=0,...,p'-1$,
let $\mathcal{SV}_i=\left( V_{L+\frac{i}{p'} \alpha} \oplus  V_{L+\frac{i}{p'} \alpha-\frac{\alpha}{p}} \right) \otimes F$.
 Then $\mathcal{SV} = \mathcal{SV}_{0}$ is a vertex operator superalgebra and $\mathcal{SV}_i$ is a
 $\mathcal{SV}$--module.
\item[(ii)]  For  $j=0,...,p-1$, let $\mathcal{SV}^o_j= \left( V_{L+\frac{j}{p} \alpha} \oplus
 V_{L+\frac{j}{p} \alpha+\frac{\alpha}{p'}} \right) \otimes F$.
 Then $\mathcal{SV}^o = \mathcal{SV}^o _{0}$ is a vertex operator superalgebra and $\mathcal{SV}^o_j$ is a   $\mathcal{SV} ^o$--module
\end{proposition}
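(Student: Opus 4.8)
The plan is to reduce the statement to the general extension principle of \cite{Li-bilinear}, exactly as for the vertex operator algebras $\cV_i$ in Section 4 and for $\striplet$ in \cite{AdM-striplet}. Write $\mathbf{V} := V_L \otimes F$, which is already an $N=1$ vertex operator superalgebra, and put $\mathbf{M} := V_{L-\a/p} \otimes F$. Since $\scV_0 = \mathbf{V} \oplus \mathbf{M}$ and the (super)conformal vectors $\tau,\omega$ already lie in $\mathbf{V}$, it suffices to equip $\mathbf{V}\oplus\mathbf{M}$ with a vertex superalgebra structure in which $\mathbf{M}$ is a square-zero ideal. The vertex operator map would be defined precisely as in Section 4: $\mathbf{V}$ acts on $\mathbf{M}$ through the $\mathbf{V}$-module structure, $\mathbf{M}$ acts on $\mathbf{V}$ by skew-symmetry, and one imposes $Y(v,x)w=0$ for $v,w\in\mathbf{M}$.

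First I would check that $\mathbf{M}$ is a genuine $\mathbf{V}$-module, which is immediate from the generalized vertex operator superalgebra $V_{L^\circ}\otimes F$ attached to the dual lattice $L^\circ$: the coset $V_{L-\a/p}$ is a $V_L$-submodule of $V_{L^\circ}$, so $\mathbf{M}$ inherits a module vertex operator by restriction of the generalized map. The square-zero prescription is in fact forced, since the generalized product $\mathbf{M}\cdot\mathbf{M}$ lands in $V_{L-2\a/p}\otimes F$ and $-2\a/p\notin L$ for general $p$; there is then no product $\mathbf{M}\cdot\mathbf{M}\to\mathbf{V}$, and setting it to zero is the only consistent choice, exactly as in the even case. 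The one substantive point is to verify the integrality and parity hypotheses of \cite{Li-bilinear}. Using $\wt(e^{l\a})=\tfrac{l^2pp'-l(p-p')}{2}$ one computes $\wt(e^{-\a/p})=\frac{1}{2}$, and since every field of $V_L$ acts on $V_{L-\a/p}$ with integral powers of $x$ (because $\langle\a,-\a/p\rangle=-p'\in\Z$), the lattice part of $\mathbf{M}$ carries conformal weights in $\frac{1}{2}+\Z$; after tensoring with $F$ the weights of $\mathbf{M}$ lie in $\frac{1}{2}\Z$ and are compatible with the parity decomposition inherited from $V_L^0\oplus V_L^1$ and from $F$. This is precisely the condition guaranteeing that the fields $Y(v,x)$, $v\in\mathbf{M}$, obtained by skew-symmetry carry integral powers of $x$ on the even part and half-integral powers on the odd part, so that super-locality holds. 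With this in place the super Jacobi identity for $\mathbf{V}\oplus\mathbf{M}$ splits case by case into the Jacobi identity on $\mathbf{V}$ and the module axioms for $\mathbf{M}$, every term with two entries from $\mathbf{M}$ vanishing; this establishes that $\scV_0$ is a vertex operator superalgebra.

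For the module assertion each $\scV_i$ is treated the same way: its $\scV_0$-vertex operator is read off from $V_{L^\circ}\otimes F$ with the analogous truncation, $\mathbf{V}$ acting on both summands while the nonzero part of the $\mathbf{M}$-action sends $V_{L+\frac{i}{p'}\a}\otimes F$ into $V_{L+\frac{i}{p'}\a-\a/p}\otimes F$ and the remaining product $Y_{\scV_i}(v,x)v_i$ is set to zero as in Section 4. The requisite moding is integral because $\langle\a,\tfrac{i}{p'}\a\rangle=ip\in\Z$ and $\langle-\a/p,\tfrac{i}{p'}\a\rangle=-i\in\Z$, which together with the half-integral fermionic modes of $F$ yields a well-defined Neveu--Schwarz-type $\scV_0$-module. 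Part (ii) is verbatim the same after replacing $-\a/p$ by $+\a/p'$ and $\frac{i}{p'}\a$ by $\frac{j}{p}\a$, and using $\wt(e^{\a/p'})=\frac{1}{2}$. I expect the only genuine obstacle to be the parity bookkeeping: one must confirm throughout that the $\Z_2$-grading coming from $V_L^0\oplus V_L^1$ together with $F$ is consistent with the half-integrality of the conformal weights, so that the super-locality signs demanded by \cite{Li-bilinear} come out correctly. This is routine but is the only place where the super (rather than purely even) nature of the construction genuinely enters, and it is why the argument is best phrased as an $N=1$ analogue of Section 4 rather than a direct citation.
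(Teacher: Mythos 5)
Your proposal is correct and follows essentially the same route as the paper, which gives no separate argument here but explicitly defers to the construction of Section 4 (the extension $V\oplus M$ via \cite{Li-bilinear}, with $Y(v,x)w=0$ for $v,w$ in the module summand) and to the identical proofs in \cite{AdM-striplet}. Your weight and pairing computations ($\wt(e^{-\alpha/p})=\tfrac12$, $\langle\alpha,-\alpha/p\rangle=-p'$, $\langle\alpha,\tfrac{i}{p'}\alpha\rangle=ip$) are the correct integrality checks, and the parity bookkeeping you flag is exactly the only super-specific point the paper leaves implicit.
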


\begin{theorem} \label{log-modules-supertriplet}

\item[(i)] Assume that $(M, Y_M)$ is a weak
$\mathcal{SV}$--module. Then the structure  $$(\widetilde{M},
\widetilde{Y}_{\widetilde{M} }(\cdot,x) ) := ( M, Y_M(
\Delta(e^{-\a /p} \otimes \phi(-\hf),x) \cdot,x))$$

  is a weak $\slogmin$-module. Moreover,
$$(\widetilde{{\scV}_i}, \widetilde{Y}_{\widetilde{{\scV}_i} } ) ,
\quad  i=0, \dots,  p'-1, $$ are logarithmic $\slogmin$-modules.

\item[(ii)] Assume that $(M, Y_M)$ is a weak ${\scV}^{o}$--module.
Then the structure  $$(\widetilde{M}, \widetilde{Y}_{\widetilde{M}
}(\cdot,x)) := ( M, Y_M( \Delta(e^{\a /p'}\otimes \phi(-\hf),x)
\cdot,x))$$
%
%
%
  is a weak $ \slogmin$--module. In particular,
$$(\widetilde{{\scV}^o_j}, \widetilde{Y}_{\widetilde{{\scV}^o_j} } ) ,
\quad  j=0, \dots, p-1, $$ are logarithmic $\slogmin$--modules.
\end{theorem}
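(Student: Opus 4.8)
The plan is to reduce Theorem \ref{log-modules-supertriplet} to the general machinery of Theorem \ref{gen-const-log} and Theorem \ref{main}, so that the only substantive work is verifying the hypotheses for the two specific even vectors
$$v = e^{-\a/p} \otimes \phi(-\hf){\bf 1} \quad \text{and} \quad v = e^{\a/p'} \otimes \phi(-\hf){\bf 1}.$$
The structure of the argument is parallel to Theorem \ref{log-modules-triplet} in the bosonic case, so I will carry out part (i) in detail and indicate that part (ii) is entirely analogous (exchanging the roles of $p$ and $p'$, $Q$ and $\widetilde{Q}$). The preceding Proposition already records that these are even vectors of conformal weight one, and that their zero modes are exactly the screening operators $\widetilde{Q}$ and $Q$ respectively; this is the input I will lean on.

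\emph{First} I would verify the three conditions needed to invoke the two general theorems. Condition (\ref{rel-c-2}), that $v$ has conformal weight one and is annihilated by $L(n)$ for $n \ge 1$, is immediate from the weight computation $wt(e^{-\a/p}\otimes\phi(-\hf){\bf 1})=1$ together with the fact that $v$ is a lowest-weight vector for the $N=1$ algebra. Condition (\ref{rel-c-1}), the mutual commutativity $[v_n,v_m]=0$, follows from the locality/operator product of $Y(v,x)$ with itself: since $\langle -\a/p,-\a/p\rangle = pp'/p^2$ combines with the fermionic two-point contribution in such a way that $Y(v,x_1)Y(v,x_2)$ has no singular terms, the vertex operator $Y(v,x)$ is mutually local of order zero with itself, which is exactly the statement that all $v_n$ commute. \emph{Then} I must identify the target subalgebra: by the Proposition defining $\slogmin$, the screening $\widetilde{Q}=v_0$ annihilates $\slogmin$, so $\slogmin \subseteq \mbox{Ker}_{V_L\otimes F}\, v_0$. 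This is precisely the containment $\overline{V}\subseteq \mbox{Ker}_V v_0$ required by Theorem \ref{gen-const-log}, with $V = V_L\otimes F$ (more precisely $\mathcal{SV}$) and $\overline{V}=\slogmin$.

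With these verified, Theorem \ref{gen-const-log} immediately gives that $(\widetilde{M},\widetilde{Y}_{\widetilde{M}})$ is a weak $\slogmin$-module for any weak $\mathcal{SV}$-module $M$, which establishes the first sentence of part (i) and, applied to $M=\mathcal{SV}_i$, shows that each $\widetilde{\mathcal{SV}_i}$ is a weak $\slogmin$-module. To upgrade ``weak module'' to ``logarithmic module'' I invoke Theorem \ref{main}: since each $\mathcal{SV}_i$ is a genuine (ordinary) module on which $L(0)$ acts semisimply, $\widetilde{\mathcal{SV}_i}$ is logarithmic \emph{if and only if} $v_0=\widetilde{Q}$ fails to act semisimply on $\mathcal{SV}_i$. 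As in Theorem \ref{main} the deformed Virasoro zero mode is $\widetilde{L(0)}=L(0)+\widetilde{Q}$, so logarithmic behaviour is governed entirely by the non-semisimplicity of the screening.

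\emph{The hard part will be} exactly this last point: showing that $\widetilde{Q}$ acts non-semisimply on each $\mathcal{SV}_i$. This is the genuinely model-dependent step and cannot be deduced formally. I expect to handle it the same way as in the bosonic triplet case: exhibit an explicit two-dimensional indecomposable for $\widetilde{Q}$ inside the relevant graded piece, by finding vectors $w$ with $\widetilde{Q}\,w \neq 0$ while $\widetilde{Q}^2 w = 0$ and $w,\ \widetilde{Q}w$ lying in the same $L(0)$-eigenspace. Concretely, one computes $\widetilde{Q}$ on a lowest-weight-type vector such as $e^{\frac{i}{p'}\a}\otimes {\bf 1}$ and checks that its image is a nonzero vector of equal weight (of the form $e^{\frac{i}{p'}\a-\frac{1}{p}\a}\otimes(\cdots)$) annihilated by a further application of $\widetilde{Q}$; the residue computation here is a fermionic analogue of the contour-integral constant $C$ appearing in Theorem \ref{log-p1}, and the nonvanishing of the relevant binomial/Gamma-function coefficient is what must be confirmed. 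Granting this non-semisimplicity, Theorem \ref{main} yields that every $\widetilde{\mathcal{SV}_i}$ is logarithmic, completing part (i); part (ii) follows verbatim with $e^{\a/p'}\otimes\phi(-\hf){\bf 1}$, $v_0=Q$, and $\overline{\overline{\cdot}}$ in place of the above.
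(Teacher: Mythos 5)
Your proposal is correct and follows essentially the same route as the paper, which gives no separate proof of this theorem but explicitly reduces it (``by the construction analogous to the one for $\logmin$'') to Theorem \ref{gen-const-log} plus the computation $\widetilde{L(0)}=L(0)+v_0$ and the non-semisimplicity of the screening on the relevant lattice module. The only cosmetic caveat is your phrase ``no singular terms'' in verifying $[v_n,v_m]=0$: the OPE exponent is $p'/p-1\in(-1,0)$ for $p>p'$, so the product does have a fractional-order singularity, but one of order less than one, which is what actually forces the integer-mode commutators to vanish.
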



It is easy to modify the previous result to the case $p'=1$ and $p=2m+1$  considered in \cite{AdM-striplet}, but now with only one screening operator.
We shall use
the same notation as in \cite{AdM-striplet}. Let $L= {\Z}{\a}$ be
a rank one lattice with nondegenerate form given by
$ \la \a , \a \ra = 2m+1 $, where $m \in {\Zp}$. Let $V_L$ be the
corresponding vertex  superalgebra. Let $F$ be the fermionic vertex
algebra as above and $M$ its canonical twisted module as in
\cite{AdM-striplet} and \cite{AdM-tstriplet}. We consider the vertex
superalgebra $V_L \otimes F$. Let $\sigma$ be the canonical ${\Z}_2$
automorphism of $V_L \otimes F$.

%
%
%
%

%

The super-triplet vertex algebra is defined as
$$\striplet = \mbox{Ker}_{ V_L \otimes F} \widetilde{Q}. $$
It was proved in \cite{AdM-striplet} that $\striplet$ is an
irrational, $C_2$--cofinite vertex operator superalgebra. The
irreducible and irreducible $\sigma$--twisted $\striplet$--modules
were classified in \cite{AdM-striplet} and \cite{AdM-tstriplet}.

Now we shall construct certain logarithmic ($\sigma$--twisted)
modules for $\striplet$.
\begin{proposition}
\item[(i)] The vector space $\mathcal{SV}= ( V_L \oplus
V_{L-\a/(2m+1)}) \otimes F$ admits a structure of vertex operator
superalgebra.

\item[(ii)]  The vector space $\mathcal{RV}= ( V_{L+ \a /2} \oplus
V_{L + \a /2 -\a/(2m+1)}) \otimes M$ is a $\sigma$--twisted
$\mathcal{SV}$--module.
\end{proposition}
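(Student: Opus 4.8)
The plan is to realize both $\mathcal{SV}$ and $\mathcal{RV}$ as subspaces of the generalized vertex superalgebra $V_{L^\circ}\otimes F$, respectively of its canonical $\sigma$--twisted module $V_{L^\circ}\otimes M$, and then to show that the (twisted) generalized vertex operators inherited from $V_{L^\circ}$ restrict to genuine (twisted) vertex operators on these subspaces. This is exactly the mechanism used in Section 4 to equip $\cV=\cV_0$ and the modules $\cV_i$ with their structures, and it is the strategy of \cite{AdM-striplet}. Since $V_L\otimes F$ is already a vertex operator superalgebra, the only new input is the contribution of the free fermion to the parity and to the (super) locality.

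For (i) I would first fix the parity on $\mathcal{SV}$ by combining the lattice parity on $V_L$ (so that $e^{k\a}$ has parity $k \bmod 2$, since $\langle\a,\a\rangle=2m+1$ is odd) with the fermionic parity on $F$. I would then invoke Li's extension theorem \cite{Li-bilinear}: to promote $V_L\otimes F$ together with the module $V_{L-\a/(2m+1)}\otimes F$ to a vertex operator superalgebra it suffices to check that the relevant weights lie in $\frac12\Z$ and that the restricted intertwining operators are mutually (super)local and satisfy the Jacobi identity. Integrality of the bosonic weights on the coset follows from the weight formula $wt(e^{l\a})=\frac{l^2pp'-l(p-p')}{2}$ with $p=2m+1$, $p'=1$, while the free fermion supplies the compensating half--integer shift through $\phi(-\hf){\bf 1}$; mutual super--locality then reduces to a direct check on the generating fields, exactly as in \cite{AdM-striplet}.

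For (ii) I would identify $\sigma$ with the canonical $\Z_2$ (parity) automorphism of $V_L\otimes F$. On the lattice side the $\sigma$--twisted sector is implemented by the half--lattice shift $V_{L+\a/2}$, and on the fermionic side by the canonical twisted module $M$; thus $V_{L+\a/2}\otimes M$ is the $\sigma$--twisted $V_L\otimes F$--module and $\mathcal{RV}$ is its extension by the coset $-\a/(2m+1)$, in perfect parallel with (i). I would then produce the twisted vertex operators $Y^\sigma$ from $V_{L^\circ}\otimes M$, verify that $\mathcal{RV}$ is $\sigma$--stable and closed under them, and check the $\sigma$--twisted Jacobi identity; the computations are those already carried out in \cite{AdM-striplet} and \cite{AdM-tstriplet}.

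The main obstacle in both parts is verifying the (twisted) Jacobi identity, that is, the mutual (super)locality of the fields coming from the two cosets: one must reconcile the monodromy produced by the lattice pairings on $L-\a/(2m+1)$ with the half--integer structure supplied by the free fermion (resp.\ by its twisted module $M$), and confirm that on the chosen subspaces these contributions combine into the integer/half--integer powers demanded by a (twisted) vertex superalgebra. Once the generating fields are shown to be (super)local, the full Jacobi identity follows by the standard arguments of \cite{LL} and \cite{DLM-1996}, and the proof concludes as in \cite{AdM-striplet}.
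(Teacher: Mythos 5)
Your proposal is correct and follows essentially the same route as the paper, which states this proposition without a written proof and defers to the Section 4 construction (restriction of generalized vertex operators from $V_{L^\circ}$), to Li's extension theorem \cite{Li-bilinear}, and to \cite{AdM-striplet}, \cite{AdM-tstriplet} for the parity/weight bookkeeping and the canonical $\sigma$--twisted fermion module $M$. Your identification of $\sigma$ with the canonical $\Z_2$ automorphism and of the twisted sector with the half--lattice shift $V_{L+\a/2}\otimes M$ is exactly the intended mechanism.
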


Let now $v = e^{-\a /(2m+1)} \otimes \phi(-\hf)$. Applying Theorem
\ref{gen-const-log} in the case of the vertex operator
superalgebra $\mathcal{SV}$ we get construction of logarithmic
modules for $\striplet$.

\begin{theorem}
\item[(i)] $ (\widetilde{\mathcal{SV}},
\widetilde{Y}_{\widetilde{\mathcal{SV}}})$ is a logarithmic
$\striplet$--module of semisimple length three.

\item[(ii)]  $(\widetilde{\mathcal{RV}},
\widetilde{Y}_{\widetilde{\mathcal{RV}}})$ is logarithmic
$\sigma$--twisted $\striplet$--module of semisimple length three.

\end{theorem}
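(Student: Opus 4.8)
The plan is to obtain both statements as direct applications of Theorems \ref{gen-const-log} and \ref{main}, and then to pin down the semisimple length by transcribing the structural analysis of Section \ref{triplet} into the $N=1$ setting. First I would check that $v=e^{-\a/(2m+1)}\otimes \phi(-\hf)\vak$ satisfies the hypotheses of Theorem \ref{gen-const-log}. The factor $e^{-\a/(2m+1)}$ is odd of weight $\hf$ and $\phi(-\hf)\vak$ is odd of weight $\hf$, so their tensor product is an \emph{even} vector of conformal weight one, giving (\ref{rel-c-2}); the relation $[v_n,v_m]=0$ of (\ref{rel-c-1}) follows from the generalized-commutator computation in $V_{\widetilde L}\otimes F$, where the lattice exponent $\la -\a/(2m+1),-\a/(2m+1)\ra$ and the square of the fermionic field conspire to make the bracket vanish (this is exactly the computation showing $\widetilde{Q}$ commutes with the Neveu--Schwarz action). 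Since $v_0=\widetilde{Q}$ and $\striplet=\mbox{Ker}_{V_L\otimes F}\widetilde{Q}\subseteq \mbox{Ker}\,v_0$, Theorem \ref{gen-const-log} applied to the $\mathcal{SV}$-module $\mathcal{SV}$ yields that $(\widetilde{\mathcal{SV}},\widetilde{Y}_{\widetilde{\mathcal{SV}}})$ is a weak $\striplet$-module; applying the $\sigma$-twisted version of the same theorem to the $\sigma$-twisted $\mathcal{SV}$-module $\mathcal{RV}$ yields that $(\widetilde{\mathcal{RV}},\widetilde{Y}_{\widetilde{\mathcal{RV}}})$ is a $\sigma$-twisted weak $\striplet$-module.

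For the logarithmic property I would reuse the computation in the proof of Theorem \ref{main}. Since $\widetilde{L(n)}=L(n)+v_n$, we get $\widetilde{L(0)}=L(0)+\widetilde{Q}$, and because $L(0)$ acts semisimply on both $\mathcal{SV}$ and $\mathcal{RV}$ (each is a finite direct sum of lattice, resp. twisted, modules on which $L(0)$ is diagonalizable), Theorem \ref{main} reduces logarithmicity to the statement that $\widetilde{Q}$ does not act semisimply. It therefore suffices to exhibit one nontrivial Jordan block, i.e. a vector $w$ with $\widetilde{Q}w\neq 0$ but $\widetilde{Q}^2 w=0$. Since $\widetilde{Q}$ lowers the $\C[L^{\circ}]$-degree by $\a/(2m+1)$, carrying the first summand $V_L\otimes F$ into $V_{L-\a/(2m+1)}\otimes F$ (and annihilating the second on degree grounds), such a $w$ is produced explicitly from a suitable lowest-weight vector of the first summand, establishing that both $\widetilde{\mathcal{SV}}$ and $\widetilde{\mathcal{RV}}$ are genuinely logarithmic.

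The real content is the claim of semisimple length \emph{exactly} three, and here I would follow the argument of Proposition \ref{filt-3} step by step. Concretely I would: (a) introduce an auxiliary screening operator $G$ of the type in (\ref{scr-def}), built from $e^{\a}$ together with a correction term and the fermion, and establish the super-analogue of Lemma \ref{G-action}, namely $[G,\widetilde{a_n}]=\widetilde{(Qa)_n}$, so that $G$ commutes with the $N=1$ action and preserves $\widetilde{\mathcal{SV}}$ (resp. $\widetilde{\mathcal{RV}}$); (b) identify the socle with a single irreducible $\striplet$-module (the super-analogue of $\Lambda(p-1)$) by showing every nonzero submodule meets the bottom graded piece and that the nonsemisimple action of $\widetilde{L(0)}$ there forces the unique highest-weight line into the submodule; and (c) produce the three-step filtration $N_0\supset N_1\supset N_2\supset N_3=0$ with successive quotients irreducible, (irreducible $\oplus$ irreducible), irreducible, realizing the two cross-extensions through $G$ exactly as $\mathcal{M}_1$ and $V_{L+\a/2-\a/p}$ were realized in Lemma \ref{non-split}. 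Ruling out a length-two filtration then amounts to showing the middle layer is a nontrivial direct sum of two copies of the same irreducible, so that the quotient by the socle is non-semisimple.

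I expect step (c), and in particular the exclusion of a shorter filtration, to be the main obstacle. As in the non-super case, the delicate point is the intersection computation analogous to $V_{L+\a/2-\a/p}\cap\mathcal{M}_1=\mathcal{N}_2$, which simultaneously pins the middle layer down to exactly two summands (not four), shows the two extensions are non-split, and shows they are non-isomorphic. This requires explicit evaluation of the deformed modes $\widetilde{E}$, $\widetilde{H}$, $\widetilde{F}$ via formula (\ref{act-def}) together with the Virasoro-module structure of the lattice pieces from \cite{AdM-striplet}; the free-fermion factor $\phi$ enters only through parity and the half-integer grading, so the binomial-coefficient and residue computations carry over essentially verbatim, but the bookkeeping of which super-components are paired under $G$ must be redone carefully. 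The twisted assertion (ii) is then handled identically, replacing $F$ by its canonical twisted module $M$ and tracking the shift $\a/2$, with the same socle/top symmetry again forcing the semisimple length to be three.
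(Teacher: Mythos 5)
Your proposal matches the paper's (largely implicit) argument: the paper likewise obtains the weak (twisted) module structure from Theorem \ref{gen-const-log} applied to $v=e^{-\a/(2m+1)}\otimes\phi(-\hf)$, derives logarithmicity from $\widetilde{L(0)}=L(0)+\widetilde{Q}$ together with the nonsemisimple (indeed square-zero but nonzero) action of $\widetilde{Q}$ exhibited on the top level, and defers the semisimple-length-three analysis to ``similar analysis as in Section \ref{triplet}.'' The only difference is that you spell out, and correctly flag the delicate intersection step in, the filtration argument that the paper leaves entirely to the reader.
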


Let us describe the twisted logarithmic module
$\widetilde{\mathcal{RV}}$ in more details. Let $(V_{\widetilde{L}},
Y)$ be the generalized vertex algebra associated to the lattice
$\widetilde{L} = {\Z} \frac{\a}{2 (2m+1)}$. As in
\cite{AdM-tstriplet}, let
$$\phi(x) = \sum_{ n \in {\Z}} \phi(n) x ^{-n - \thf}$$
be the field which defines on $M$ the structure of twisted
$F$--module. Then one can show that
$$\widetilde{Q} = \mbox{Res}_x Y(e^{-\alpha /(2m+1)},x) \otimes
\phi(z) = \sum_{n \in {\Z} } (e^{-\alpha /(2m+1)}) _{ -n -\thf}
\otimes \phi(n) $$
is the (second) screening operator which commutes with the action of
the Ramond algebra ( see Remark 5.2 of \cite{AdM-tstriplet}).
 By using similar analysis
as in Section \ref{triplet}, one can see that on  twisted module
$\widetilde{\mathcal{RV}}$:
$$ \widetilde{L(0)} = L(0) + \widetilde{Q}.$$

Then $\widetilde{\mathcal{RV}}$ is a ${\N}$--graded and that on its
top level $\widetilde{\mathcal{RV}} (0) $, the operator $\widetilde{L(0)}$ acts (in the obvious basis) as
$$\left[\begin{array}{cc} h^{2m,1} & 1 \\ 0 &   h^{2m,1} \end{array} \right], $$
where $h ^{2m,1} = \frac{1- 2m}{ 8} + \frac{1}{16}$. This gives
additional evidence for conjectures presented in
\cite{AdM-tstriplet}.

\section{Logarithmic representations of the affine vertex operator
algebra $\ver$ }

Admissible representations of affine Lie algebras  were introduced
by V. Kac and M. Wakimoto in \cite{KW}. These representations have
been  also studied in the framework of vertex operator algebras (cf.
\cite{A-1994}, \cite{AM-1995}, \cite{KW2}, \cite{P-2007}).

 In this section we shall present a
construction of logarithmic representations of the affine vertex
operator algebra ${\ver}$ which uses  realization from
\cite{A-2005}. This explicit realization enable us to apply
Theorem \ref{gen-const-log} in the case of the vertex operator
algebra ${\ver}$.

First we shall briefly recall the construction from \cite{A-2005}.

Define the following lattice
$$ \widetilde{L}= {\Z}\gamma + {\Z} \delta, \qquad \la \gamma , \gamma \ra = - \la
\delta , \delta \ra = \frac{1}{6}, \ \la \gamma , \delta \ra =
0.$$
Let $V_{ \widetilde{L}}$ be the associated generalized vertex
algebra with the vertex operator map $Y$.

Now we define the  screening operators:
\bea \label{exp-scr} &&Q = \mbox{Res}_x Y(e^{-6 \gamma},x) = e^{-6 \gamma}_0, \nonumber \\
&&\widetilde{Q} = \mbox{Res}_x Y(e^{2 \gamma},x)= e^{2 \gamma} _0,
\nonumber  \eea

and the following Virasoro element
$$ \omega = (3 \gamma(-1)^{2} - 2 \gamma (-2) -
3 \delta(-1)^{2}){\bf 1}. $$
Then $\omega$  generates the Virasoro vertex operator algebra
$L^{Vir}(-6,0)$. Set
$$L(x)= Y(\omega,x) = \sum_{ n \in {\Z} } L(n) x^{-n-2}. $$

  Define the following vectors in $ V_{\widetilde{L}}$:
\bea
& e &= e^{3 (\gamma - \delta)}, \nonumber \\
& h & = 4 \delta (-1){\bf 1} , \nonumber \\
& f & = -\tfrac{2}{9} Q e^{3 (\gamma + \delta)} \nonumber \\ && =
- ( 4 \gamma(-1) ^{2} - \tfrac{2}{3}\gamma(-2) ) e^{-3(\gamma -
\delta)}. \nonumber
\eea

Then $e, h, f$ are primary vectors of conformal weight $1$ for the
Virasoro algebra.
%

Define
$$  {D}= {\Z}(3\gamma + 3\delta) + {\Z}(3\gamma - 3\delta).$$
Clearly, $V_{ {D}}$ is a subalgebra of $V_{\widetilde{L}}$ which
contains vectors $e$, $f$ and $h$. Since $D$ is an even lattice,
$V_D$ is a vertex algebra.

\begin{theorem} \cite{A-2005} The vectors $e$, $f$ and $h$ generate a subalge\-bra of the
vertex algebra $V_{{D}}$ isomorphic to the vertex operator algebra
${\ver}$. Moreover,
$$\ver \subset \mbox{Ker}_{V_D} \widetilde{Q}. $$
\end{theorem}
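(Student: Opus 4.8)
The plan is to recognize this as a free-field (lattice) realization of the simple admissible affine vertex algebra $\ver = L(-\tfrac43\Lambda_0)$ at level $-\tfrac43$ and to proceed in three stages: first, check that the modes of $e$, $h$, $f$ close the affine $A_1^{(1)}$ relations at level $-\tfrac43$; second, deduce a surjection from the universal affine vertex algebra onto $\langle e,h,f\rangle \subseteq V_D$ and identify the image with the simple quotient $\ver$; third, establish the screening inclusion $\ver \subseteq \mbox{Ker}_{V_D}\widetilde{Q}$. A first consistency check is that the Sugawara central charge at level $-\tfrac43$ is exactly $c = \tfrac{(-4/3)\cdot 3}{-4/3+2} = -6$, matching the Virasoro vertex algebra $L^{Vir}(-6,0)$ generated by $\omega$.

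For the first stage I would use the standard commutator formulas for vertex operators in $V_{\widetilde L}$. The relations involving $h = 4\delta(-1)\mathbf{1}$ are immediate: since $Y(h,x) = 4\delta(x)$ and $\la\delta,\delta\ra = -\tfrac16$, one gets $[h(m),h(n)] = 16\la\delta,\delta\ra\, m\,\delta_{m+n,0} = -\tfrac83 m\,\delta_{m+n,0}$, which fixes the level $k = -\tfrac43$; and from $[\delta(m),Y(e^{\pm(3\gamma-3\delta)},x)] = \mp 3\la\delta,\delta\ra\, x^m Y(e^{\pm(3\gamma-3\delta)},x)$ one obtains $[h(m),e(n)] = 2e(m+n)$ and $[h(m),f(n)] = -2f(m+n)$ (the $\gamma$-dressing in $f$ being inert since $\la\gamma,\delta\ra=0$). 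The main obstacle is the bracket $[e(m),f(n)] = h(m+n) - \tfrac43 m\,\delta_{m+n,0}$. Here one must compute the full operator product $Y(e,x)Y(f,y)$; writing $\lambda_e = 3\gamma-3\delta$ and noting $\la\lambda_e,\lambda_e\ra = 0$, so that $\lambda_f = -\lambda_e$ and $\la\lambda_e,\lambda_f\ra = 0$, the pure exponential factor is nonsingular, and all poles producing $h$ and the central term must arise from contracting the Heisenberg dressing $4\gamma(-1)^2 - \tfrac23\gamma(-2)$ in $f$ against the creation modes of $Y(e^{\lambda_e},x)$. Extracting the $(x-y)^{-1}$ and $(x-y)^{-2}$ coefficients and verifying they reproduce $h(m+n)$ and the level $-\tfrac43$ is the technical heart of the argument.

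The second stage then follows formally: by the reconstruction theorem the mutually local fields $Y(e,x), Y(h,x), Y(f,x)$ together with the vacuum determine a vertex algebra homomorphism from the universal affine vertex algebra $V^{-4/3}$ onto $\langle e,h,f\rangle$. To identify the image with the simple quotient $\ver$ one checks that the generator of the maximal ideal of $V^{-4/3}$ — the admissible singular vector at level $-\tfrac43$ (i.e.\ $k+2 = 2/3$) — maps to zero in $V_D$, equivalently that $\langle e,h,f\rangle$ is simple. This is a finite computation with the explicit vectors, and it is the step that genuinely uses the special value $-\tfrac43$.

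Finally, for the inclusion $\ver\subseteq\mbox{Ker}_{V_D}\widetilde Q$ I would use that $\widetilde Q = e^{2\gamma}_0$ is the zero mode of a field of conformal weight one, so by the commutator formula $[\widetilde Q, Y(a,x)] = Y(\widetilde Q a, x)$ it is a derivation of all products; hence it suffices to annihilate the generators. Indeed $\widetilde Q\mathbf 1 = 0$, while $\widetilde Q e = 0$ and $\widetilde Q h = 0$ because $\la 2\gamma,\lambda_e\ra = 1 \ge 0$ and $h$ has trivial charge, so the relevant series carry no $x^{-1}$ term. For $f$ one writes $f = -\tfrac29 Q\, e^{3\gamma+3\delta}$ and combines $\widetilde Q\, e^{3\gamma+3\delta} = 0$ (again $\la 2\gamma, 3\gamma+3\delta\ra = 1$) with the computation $(\gamma(-1)e^{-4\gamma})_0\, e^{3\gamma+3\delta} = 0$, whence $\widetilde Q Q\, e^{3\gamma+3\delta} = [\widetilde Q, Q]\, e^{3\gamma+3\delta} = 0$ and therefore $\widetilde Q f = 0$. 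The derivation property then propagates the vanishing to the entire generated subalgebra $\ver$, completing the proof.
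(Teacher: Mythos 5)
First, a point of orientation: this paper does not prove the statement at all --- it is imported verbatim from \cite{A-2005}, so there is no internal argument to compare yours against. Judged on its own merits, your outline follows the same standard free-field strategy as the cited source, and the parts you actually compute are correct. The bracket $[h(m),h(n)]=16\la\delta,\delta\ra\, m\,\delta_{m+n,0}=-\tfrac{8}{3}m\,\delta_{m+n,0}$ does pin the level at $-\tfrac{4}{3}$, the Sugawara central charge $-6$ matches the Virasoro algebra generated by $\omega$, and your treatment of the screening is exactly right: attacking $\widetilde Q f$ directly runs into the pole coming from $\la 2\gamma,-3\gamma+3\delta\ra=-1$, whereas writing $f=-\tfrac{2}{9}Qe^{3\gamma+3\delta}$ and using $\widetilde Q e^{3\gamma+3\delta}=0$ (since $\la 2\gamma,3\gamma+3\delta\ra=1\ge 0$) together with the commutativity of the two screenings circumvents it. In fact your auxiliary claim can be strengthened: $[\widetilde Q,Q]=\pm 2\,(\gamma(-1)e^{-4\gamma})_0$ and $\gamma(-1)e^{-4\gamma}=-\tfrac{1}{4}L(-1)e^{-4\gamma}$, so $(\gamma(-1)e^{-4\gamma})_0=0$ identically and the screenings commute as operators, not merely on the single vector $e^{3\gamma+3\delta}$. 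The derivation property $[\widetilde Q,a_n]=(\widetilde Q a)_n$ then propagates the vanishing from the generators to all of $\la e,h,f\ra$, as you say.

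Two caveats. The bracket $[e(m),f(n)]=h(m+n)-\tfrac{4}{3}m\,\delta_{m+n,0}$ is the genuine computational content of the first assertion, and you leave it entirely unexecuted; that is acceptable in an outline, but it is the only step where the normalization $-\tfrac{2}{9}$ and the dressing $4\gamma(-1)^2-\tfrac{2}{3}\gamma(-2)$ actually matter, so a complete proof cannot skip it. More substantively, your second stage asserts that vanishing of the singular vector of $V^{-4/3}$ in the image is ``equivalent'' to simplicity of $\la e,h,f\ra$. That is only true if one already knows that the maximal ideal of the universal affine vertex algebra at level $-\tfrac{4}{3}$ is generated by that single singular vector --- a nontrivial input (valid for admissible levels of $\widehat{sl_2}$, but it must be cited), without which vanishing of the singular vector only exhibits the image as a quotient of $V^{-4/3}/J$, possibly intermediate between that and $\ver$. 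Alternatively one proves simplicity of the image directly from the decomposition of $V_D$ into irreducible $\ver$--modules, which is the route taken in \cite{A-2005}. As written, that step has a gap, though it is fillable by standard results.
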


Now we shall consider the vertex operator algebra $${\cV} = V_D
\oplus V_{D + 2 \gamma}$$ and its simple module
$$ \mathcal{M} = V_{ D-3\gamma + \delta} \bigoplus V_{ D -\gamma +
\delta}.$$
(The associated vertex operators are restrictions of the vertex
operator in the generalized vertex algebra $V_{  \widetilde{L} }$
and will be also denoted by $Y$ ).

We also note that $V_{D - 2 \gamma}$ and $V_{D-\gamma + \delta}$ are
(irreducible)  ${\cV}$--modules (on which $V_{D + 2 \gamma}$ acts
trivially) and that there is a non-trivial intertwining operator
${\mathcal Y}$ of type
\bea {\mathcal{M} \choose  V_{D-\gamma + \delta} \quad V_{D-2 \gamma
} }.  \label{sl2-int-1}\eea

 Let now $v = e^{2 \gamma} $. Applying Theorem
\ref{gen-const-log} in the case of the vertex operator
superalgebra $\mathcal{V}$ we get construction of logarithmic
${\ver}$--modules.

\begin{theorem}
 The pairs $  (\widetilde{{\cV}}, \widetilde{Y} )$ and $ (\widetilde{\mathcal{M}}, \widetilde{Y})$ are
 logarithmic ${\ver}$--modules. In particular, the components of
 the fields
 \bea
&& \widetilde{e(x)} = \widetilde{Y}(e,x) = Y(e,x), \nonumber \\
&& \widetilde{h(x)} = \widetilde{Y}(h,x) = Y(h,x), \nonumber \\
&&\widetilde{f(x)} = \widetilde{Y}(f,x) = Y(f,x) + \frac{4}{3} x
^{-1} Y( \gamma(-1) e^{-\gamma + 3 \delta},x) + \frac{1}{3} x^{-2}
Y( e^{-\gamma + 3 \delta},x), \nonumber
 \eea
define on ${\cV}$ and $\mathcal{M}$  structures of  $A_1
^{(1)}$--modules of level $-\tfrac{4}{3}$.
\end{theorem}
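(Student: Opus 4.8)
The plan is to apply Theorem \ref{gen-const-log} directly, using the choice $v = e^{2\gamma}$. First I would verify that $v$ satisfies the two hypotheses of that theorem. Since $e^{2\gamma}$ is supported on a single lattice coset and $\langle 2\gamma, 2\gamma\rangle = 4 \cdot \tfrac{1}{6} = \tfrac{2}{3}$ is not a negative integer (indeed it is positive), the components $v_n$ commute among themselves, giving $[v_n,v_m]=0$ for all $n,m$; this is a standard property of lattice vertex operators $e^\lambda$ with $\langle\lambda,\lambda\rangle \geq 0$. Next, using the explicit $\omega = (3\gamma(-1)^2 - 2\gamma(-2) - 3\delta(-1)^2)\mathbf{1}$, a direct computation of $L(n)e^{2\gamma}$ shows that $e^{2\gamma}$ has conformal weight one and is a primary (lowest-weight) vector, so $L(n)v = \delta_{n,0}v$ for $n \in \N$; here $\mathrm{wt}(e^{2\gamma}) = \tfrac{1}{2}\langle 2\gamma,2\gamma\rangle + (\text{correction from } -2\gamma(-2)) = 1$, which must be checked against the Virasoro element. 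With these verified, and since by the cited Theorem $\ver \subseteq \mathrm{Ker}_{V_D}\widetilde{Q} = \mathrm{Ker}_{V_D}\,v_0$, Theorem \ref{gen-const-log} immediately yields that $(\widetilde{\cV}, \widetilde{Y})$ and $(\widetilde{\mathcal{M}}, \widetilde{Y})$ are weak $\ver$-modules.

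To upgrade these to \emph{logarithmic} modules I would invoke Theorem \ref{main}: it suffices to check that $L(0)$ acts semisimply on $\cV$ and on $\mathcal{M}$ (which holds since these are ordinary lattice-type modules with the standard semisimple $L(0)$-grading) while $v_0 = \widetilde{Q}$ does \emph{not} act semisimply on them. The nonsemisimplicity of $\widetilde{Q}$ is exactly the mechanism exploited throughout the paper; concretely, $\widetilde{L(0)} = L(0) + \widetilde{Q}$ will then have a nontrivial Jordan block, producing the logarithmic behavior.

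It remains to establish the explicit formulas for the deformed fields. For $e$ and $h$ I would show the deformation is trivial: since $e = e^{3(\gamma-\delta)}$ and $h = 4\delta(-1)\mathbf{1}$ both lie in $\mathrm{Ker}\,v_0$ and, more strongly, satisfy $v_n \cdot e = v_n \cdot h = 0$ for all $n \geq 1$ (because $\langle 2\gamma, 3(\gamma-\delta)\rangle$ and $\langle 2\gamma,\delta\rangle$ force the relevant operator products to have no singular terms contributing to $\Delta(v,x)$), one gets $\Delta(v,x)e = e$ and $\Delta(v,x)h = h$, whence $\widetilde{Y}(e,x) = Y(e,x)$ and $\widetilde{Y}(h,x) = Y(h,x)$. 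The interesting case is $f$, where I would compute $\Delta(e^{2\gamma},x)f$ using the definition (\ref{delta-main}): the factor $x^{v_0}$ contributes the $x$-powers and the exponential contributes finitely many correction terms $v_n f$ for small $n\geq 1$. Using $f = -(4\gamma(-1)^2 - \tfrac{2}{3}\gamma(-2))e^{-3(\gamma-\delta)}$, the operator products $e^{2\gamma}_n f$ produce precisely the vectors $\gamma(-1)e^{-\gamma+3\delta}$ and $e^{-\gamma+3\delta}$ with the stated coefficients $\tfrac{4}{3}$ and $\tfrac{1}{3}$, yielding the displayed formula for $\widetilde{f(x)}$. The main obstacle is this last computation: one must carefully track the action of $\Delta(v,x)$ on $f$, correctly identifying which modes $v_n f$ are nonzero and computing the lattice cocycle factors and binomial coefficients that fix the constants $\tfrac{4}{3}$ and $\tfrac{1}{3}$. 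Once these three fields are shown to close the level $-\tfrac{4}{3}$ affine $A_1^{(1)}$ relations under $\widetilde{Y}$ — which follows from the module property already established together with the cited realization theorem — the proof is complete.
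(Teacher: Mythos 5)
Your proposal is correct and follows exactly the route the paper intends: the paper gives no written proof beyond the remark that one applies Theorem \ref{gen-const-log} with $v=e^{2\gamma}$ (using $\ver\subset\mathrm{Ker}_{V_D}\widetilde{Q}$ and Theorem \ref{main} for the logarithmic property), and the displayed formulas are precisely the evaluation of $\Delta(e^{2\gamma},x)$ on $e$, $h$, $f$, with only $v_1f$ and $v_2f$ surviving, as you describe. Your outline fills in the same steps the authors leave implicit, so there is nothing to object to beyond actually carrying out the two mode computations fixing the constants $\tfrac{4}{3}$ and $\tfrac{1}{3}$.
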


Define now the following module

$$ \mathcal{R}_{-1/3} = U (\widehat{sl_2}) . e^{-3 \gamma +
\delta} \subset \widetilde{\mathcal{M}}. $$
Recall that
$$ \widetilde{L(0)} =  L(0) + \widetilde{Q}.$$
Then
\bea
&&\widetilde{L(0)} e^{-3 \gamma + \delta } =  -\frac{1}{3} e^{-3
\gamma + \delta }
+ e^{-\gamma + \delta}, \nonumber \\
&&\widetilde{L(0)}e^{-\gamma + \delta} =  -\frac{1}{3} e^{-\gamma
+ \delta}. \nonumber
 \eea

 Therefore $\mathcal{R}_{-1/3} $ is an logarithmic $\ver$--module.

On the other hand, the results from \cite{A-2005} implies that
 $$U (\widehat{sl_2}) . e^{-  \gamma +
 \delta} \cong L(-\frac{2}{3} \Lambda_0 - \frac{2}{3} \Lambda_1), $$
and that  $$ E =U (\widehat{sl_2}) . e^{-  2 \gamma}  $$ is a
$\N$--graded
 ${\ver}$--module.

 Note that  $\mathcal{R}_{-1/3} $ contains $e^{-  \gamma +
 \delta}$ and therefore a submodule isomorphic to $ L(-\frac{2}{3} \Lambda_0 - \frac{2}{3}
 \Lambda_1)$.
 But  we shall now  see that $\mathcal{R}_{-1/3} $ is not $\N$--graded.
 We have:

 \bea &&\widetilde{e(n+1)} e^{-3 \gamma + \delta } = \widetilde{f(n+2)} e^{-3 \gamma + \delta }
 = 0 \quad \mbox{for} \ n \ge 0 , \label{hw-1}\\
 && \widetilde{e(0)} e^{-3 \gamma + \delta } = e ^{-2 \delta}, \quad \widetilde{f(1)} e^{-3 \gamma + \delta
 } = \nu e ^{-4 \gamma + 4 \delta}  \quad (\nu \ne 0).
 \label{hw-2} \eea

Thus $e ^{-4 \gamma + 4 \delta} \in \mathcal{R}_{-1/3} $.

 By using   explicit realization \cite{A-2005}, one can see  that
 $$ E_1 =U (\widehat{sl_2}) . e^{- 4 \gamma +
4 \delta} $$
is a weak, ordinary module $\Z$--graded module, but gradation
is not bounded. (Note that also  $L(-\frac{2}{3} \Lambda_0 -
\frac{2}{3} \Lambda_1)$ is isomorphic to a submodule of $E_1$).
Now relations (\ref{hw-1})-(\ref{hw-2}) imply that
$$E_2 =  \mathcal{R}_{-1/3} / U (\widehat{sl_2}) . e^{- 4 \gamma +
4 \delta}$$ is a non-logarithmic ${\N}$--graded $\ver$--module.
\begin{proposition}
The logarithmic ${\ver}$--module $\mathcal{R}_{-1/3}$ appears in
the extension
$$ 0 \rightarrow E_1
\rightarrow \mathcal{R}_{-1/3}  \rightarrow E_2 \rightarrow 0 $$
of non-logarithmic weak module $E_1 $ and non-logarithmic
${\N}$--graded module $E_2$.
\end{proposition}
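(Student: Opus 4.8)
The plan is to read the short exact sequence directly off the relations (\ref{hw-1})--(\ref{hw-2}) and then to treat the two end terms separately. First I would record that $E_1 \subseteq \mathcal{R}_{-1/3}$: since $\widetilde{f(1)} e^{-3\gamma+\delta} = \nu\, e^{-4\gamma+4\delta}$ with $\nu \neq 0$ and $\widetilde{f(1)}$ is a mode of the $\ver$--action on $\widetilde{\mathcal{M}}$, the generator $e^{-4\gamma+4\delta}$ already lies in $\mathcal{R}_{-1/3} = U(\widehat{sl_2}).e^{-3\gamma+\delta}$. Hence $E_1 = U(\widehat{sl_2}).e^{-4\gamma+4\delta}$ is a $\ver$--submodule, and by the very definition of $E_2 = \mathcal{R}_{-1/3}/E_1$ the sequence $0 \to E_1 \to \mathcal{R}_{-1/3} \to E_2 \to 0$ is exact. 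It remains only to see that neither end is logarithmic.

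For $E_1$ I would exhibit it inside a single coset on which the deformation is invisible. The vector $e^{-4\gamma+4\delta}$ lies in $V_{D-\gamma+\delta}$, and the screening $\widetilde{Q} = e^{2\gamma}_0$ acts as zero there (its image would sit in $V_{D+\gamma+\delta}$, outside $\mathcal{M}$). The same is true of the correction terms in $\widetilde{f(x)}$, which are built from $e^{-\gamma+3\delta} \in V_{D+2\gamma}$ and hence annihilate $V_{D-\gamma+\delta}$ in the $\mathcal{V}$--module structure; consequently $\widetilde{e}$, $\widetilde{h}$, $\widetilde{f}$ act on $e^{-4\gamma+4\delta}$ exactly as the undeformed operators and preserve $V_{D-\gamma+\delta}$. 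Thus $E_1 \subseteq V_{D-\gamma+\delta}$, where $\widetilde{L(0)} = L(0) + \widetilde{Q} = L(0)$ is semisimple, so $E_1$ is non--logarithmic; the realization of \cite{A-2005} then identifies it as a weak ($\Z$--graded) module whose gradation is not bounded.

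For $E_2$ I would show that the image of $e^{-3\gamma+\delta}$ is a genuine singular (lowest--weight) vector. By (\ref{hw-1}) one has $\widetilde{e(m)} e^{-3\gamma+\delta} = 0$ for $m \ge 1$ and $\widetilde{f(m)} e^{-3\gamma+\delta} = 0$ for $m \ge 2$, while the only remaining positive mode satisfies $\widetilde{f(1)} e^{-3\gamma+\delta} = \nu\, e^{-4\gamma+4\delta} \in E_1$ by the first step; together with the obvious vanishing of the positive $\widetilde{h}$--modes (as $h=4\delta(-1)\mathbf{1}$ and $\delta(m)e^{-3\gamma+\delta}=0$ for $m\ge 1$), this shows that the image of $e^{-3\gamma+\delta}$ in $E_2$ is annihilated by all positive modes of the affine algebra. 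Hence $E_2$ is a lower--bounded, and thus $\N$--graded, highest--weight module. Finally, $\widetilde{L(0)}$ acts semisimply on $E_2$: the only source of a Jordan block in $\mathcal{R}_{-1/3}$ is the pairing $\widetilde{L(0)} e^{-3\gamma+\delta} = -\tfrac{1}{3} e^{-3\gamma+\delta} + e^{-\gamma+\delta}$, and $e^{-\gamma+\delta}$ equals (up to a nonzero scalar) $\widetilde{e(-1)} e^{-4\gamma+4\delta}$, since $\langle 3\gamma-3\delta,\, -4\gamma+4\delta\rangle = 0$ forces the $e^{-\gamma+\delta}$--term of $Y(e,x)e^{-4\gamma+4\delta}$ into mode $-1$; thus $e^{-\gamma+\delta} \in E_1$ and the block collapses in the quotient. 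The step I expect to be most delicate is precisely this control of the deformation across the coset decomposition of $\mathcal{M}$ --- verifying that the logarithmic pairing sits entirely between $E_1$ and $E_2$, with $E_1$ capturing its image and $E_2$ its preimage, so that no additional Jordan blocks survive in either end term.
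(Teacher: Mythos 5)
Your proposal is correct and follows essentially the same route as the paper, whose own argument is just the discussion preceding the proposition: $e^{-4\gamma+4\delta}\in\mathcal{R}_{-1/3}$ via $\widetilde{f(1)}$, so $E_1$ is a submodule, with non-logarithmicity of $E_1$ attributed to the explicit realization of \cite{A-2005} and that of the $\N$--graded quotient $E_2$ to relations (\ref{hw-1})--(\ref{hw-2}). You actually supply more detail than the paper does — notably the coset argument that $\widetilde{Q}$ and the correction terms of $\widetilde{f}$ vanish on $V_{D-\gamma+\delta}$, and the identification $e^{-\gamma+\delta}=e_{-1}e^{-4\gamma+4\delta}\in E_1$ which collapses the Jordan block in the quotient — and these checks are sound.
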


The results from this section imply the existence of the following
intertwining operator (this intertwining operator can be also
constructed from intertwining operator (\ref{sl2-int-1}) and methods
from Section \ref{log-int-sec} below).
\begin{proposition} \label{sl2-int-2}
There is a non-trivial (logarithmic) intertwining operator of type
$${ \mathcal{R}_{-1/3} \choose L(-\frac{2}{3} \Lambda_0 -
\frac{2}{3} \Lambda_1) \quad E }. $$
\end{proposition}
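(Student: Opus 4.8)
The plan is to manufacture the operator by transporting the ordinary intertwining operator $\mathcal{Y}$ of type (\ref{sl2-int-1}) through the very deformation $\Delta(e^{2\gamma},x)$ that produced the logarithmic modules of this section, exactly in the spirit of Section \ref{log-int-sec}. The underlying reason it should work is lattice arithmetic: the three cosets satisfy $(-\gamma+\delta)+(-2\gamma)\equiv(-3\gamma+\delta)\pmod{D}$, so the generalized vertex operator $Y$ of $V_{\widetilde{L}}$ already pairs $V_{D-\gamma+\delta}$ with $V_{D-2\gamma}$ into $V_{D-3\gamma+\delta}$; this is precisely the non-logarithmic operator $\mathcal{Y}$ of (\ref{sl2-int-1}), which I would first restrict along $\ver\subset\mathcal{V}$ to a (still nontrivial) $\ver$--intertwining operator of type ${\mathcal{M}\choose V_{D-\gamma+\delta}\quad V_{D-2\gamma}}$.

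Next I would re-read this operator as taking values in the deformed module. Concretely, the two source slots are kept as the ordinary $\ver$--modules $U(\widehat{sl_2})\cdot e^{-\gamma+\delta}\cong L(-\tfrac{2}{3}\Lambda_0-\tfrac{2}{3}\Lambda_1)$ inside $V_{D-\gamma+\delta}$ and $U(\widehat{sl_2})\cdot e^{-2\gamma}=E$ inside $V_{D-2\gamma}$, while the target $\mathcal{M}$ is replaced by the logarithmic module $\widetilde{\mathcal{M}}$ of this section. Combining $\mathcal{Y}$ with $\Delta(e^{2\gamma},x)$ as in Li's simple--current calculus \cite{Li-sc} (and Section \ref{log-int-sec}) yields a candidate operator $\widetilde{\mathcal{Y}}$ intertwining the undeformed actions on the source slots with the deformed action $\widetilde{Y}_{\widetilde{\mathcal{M}}}(a,x)=Y_{\mathcal{M}}(\Delta(e^{2\gamma},x)a,x)$ on the target. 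Because $\Delta(e^{2\gamma},x)$ carries the factor $x^{e^{2\gamma}_0}=\exp(\widetilde{Q}\log x)$ and $\widetilde{Q}$ acts non-semisimply on $\mathcal{M}$ (this is exactly what makes $\widetilde{\mathcal{M}}$ logarithmic, cf. Theorem \ref{main}), the operator $\widetilde{\mathcal{Y}}$ genuinely involves $\log x$ and is therefore a logarithmic intertwining operator rather than an ordinary one.

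It then remains to cut down to the correct cyclic pieces and check non-triviality. Using the coset arithmetic above together with the highest-weight relations (\ref{hw-1})--(\ref{hw-2}), one verifies that $\widetilde{\mathcal{Y}}$ sends $L(-\tfrac{2}{3}\Lambda_0-\tfrac{2}{3}\Lambda_1)\otimes E$ into the cyclic submodule $\mathcal{R}_{-1/3}=U(\widehat{sl_2})\cdot e^{-3\gamma+\delta}\subseteq\widetilde{\mathcal{M}}$, which produces an operator of the desired type ${\mathcal{R}_{-1/3}\choose L(-\tfrac{2}{3}\Lambda_0-\tfrac{2}{3}\Lambda_1)\quad E}$. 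Non-triviality is inherited from $\mathcal{Y}$: since $\Delta(e^{2\gamma},x)$ is invertible, the leading coefficient of $\widetilde{\mathcal{Y}}(e^{-\gamma+\delta},x)\,e^{-2\gamma}$ reproduces, up to a nonzero scalar and a power of $x$, the nonzero vector $\mathcal{Y}(e^{-\gamma+\delta},x)\,e^{-2\gamma}$, which by the realization of \cite{A-2005} does not vanish.

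The main obstacle is the verification hidden in the second paragraph. Li's $\Delta$--calculus is formulated for genuine simple currents, where the relevant zero mode acts semisimply; here $\widetilde{Q}$ is nilpotent-plus-scalar, so one must confirm that the transported object still obeys the axioms of a \emph{logarithmic} intertwining operator as in \cite{HLZ}, \cite{M1} -- in particular the logarithmic Jacobi identity and the $\widetilde{L(0)}$--bracket relation now carrying $\log x$ terms -- rather than those of an ordinary one, and that the bookkeeping of the $\Delta$--factors leaves both source slots ordinary while confining the entire logarithm to the target $\mathcal{R}_{-1/3}$. The remaining identifications of the three slots with $L(-\tfrac{2}{3}\Lambda_0-\tfrac{2}{3}\Lambda_1)$, $E$ and $\mathcal{R}_{-1/3}$ are then routine given the explicit lattice realization.
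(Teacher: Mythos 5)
Your overall route is exactly the one the paper indicates: its entire justification is the remark that the operator "can be also constructed from intertwining operator (\ref{sl2-int-1}) and methods from Section \ref{log-int-sec}," i.e.\ apply Theorem \ref{log-int-2} with $v=e^{2\gamma}$ to the operator of type ${\mathcal{M} \choose V_{D-\gamma+\delta}\ \ V_{D-2\gamma}}$ and then restrict to the cyclic pieces $L(-\tfrac{2}{3}\Lambda_0-\tfrac{2}{3}\Lambda_1)$, $E$ and $\mathcal{R}_{-1/3}$. Your slot identifications, the lattice-coset arithmetic, and the non-triviality argument via invertibility of $\Delta$ all match what the paper has in mind.

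There is, however, one concrete misstep in your second paragraph. In Theorem \ref{log-int-2} the operator $\Delta_{log}(v,x)$ is inserted in the \emph{first tensor slot}, $\widetilde{\mathcal{Y}}(w,x)=\mathcal{Y}(\Delta_{log}(v,x)w,x)$ with $w\in M_1$; the only possible source of $\log x$ is therefore the action of $v_0=\widetilde{Q}=e^{2\gamma}_0$ on the input module $M_1$, not --- as you assert --- the non-semisimple action of $\widetilde{Q}$ on the target $\mathcal{M}$. But the paper notes explicitly that $V_{D+2\gamma}$ acts trivially on both $V_{D-\gamma+\delta}$ and $V_{D-2\gamma}$, so $\Delta_{log}(e^{2\gamma},x)$ is the identity on either input slot and your stated mechanism produces no logarithm at all. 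This is not a cosmetic point: a log-free intertwining operator whose two inputs are $L(0)$-semisimple has image consisting of $\widetilde{L(0)}$-eigenvectors (from the bracket $[\widetilde{L(0)},\widetilde{\mathcal{Y}}(w,x)]=\widetilde{\mathcal{Y}}(L(0)w,x)+x\tfrac{d}{dx}\widetilde{\mathcal{Y}}(w,x)$), and so could never reach the generator $e^{-3\gamma+\delta}$ of $\mathcal{R}_{-1/3}$, which lies in a nontrivial Jordan block. So the logarithm must be traced to the structure of the operator (\ref{sl2-int-1}) with respect to the full extended algebra $\mathcal{V}=V_D\oplus V_{D+2\gamma}$ and the deformed module structure on the target, and that is precisely the verification you defer to your last paragraph; as written, your justification for the word "logarithmic" does not go through, even though the existence and non-triviality parts of the argument are sound and agree with the paper's.
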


\begin{remark}
We should mention that the existence of such logarithmic module
$\mathcal{R}_{-1/3}$ was predicted  in the fusion rules analysis of
\cite{G}.  Gaberdiel found ${ \mathcal{R}_{-1/3} } $ in the fusion
product (which should correspond to tensor product) of
non-logarithmic $\N$-graded modules $L(-\frac{2}{3} \Lambda_0 -
\frac{2}{3} \Lambda_1)$ and $E$. Our intertwining operator from
Proposition \ref{sl2-int-2} is in agreement with this fusion product
in \cite{G}.

 One can
also check that the structure of our logarithmic module
$\mathcal{R}_{-1/3}$ can be also illustrated by Figure 2 in
\cite{G}. So we  believe that our construction present an explicit
realization of this logarithmic module.

We can also make a similar analysis of other logarithmic
submodules of
 $  \widetilde{{\cV}}$  and $ \widetilde{\mathcal{M}}$.  We plan
 to study these logarithmic modules and the corresponding fusion rules in our forthcoming papers.
\end{remark}

\section{Logarithmic intertwining operators}

\label{log-int-sec}  Let us recall the main ingredients in the definition of logarithmic
intertwining operators following \cite{M1}, \cite{M2} and \cite{HLZ}. These are simply intertwining-like operator maps
$$\mathcal{Y}(w,x) : W_1 \otimes W_2 \longrightarrow W_3 \{x \} [{\rm log}(x)],$$
satisfying the $L(-1)$-property and the Jacobi identity identical to the ordinary Jacobi identity in the definition of ordinary intertwining operator
maps. These objects appear naturally if one studies differential equations satisfied by correlation functions \cite{M1}.
First nontrivial constructions of intertwining operators were obtained in \cite{M2} and \cite{AdM-2007} (these structures were predicted earlier in \cite{M1}).
In this part we present a general construction of intertwining operators among certain logarithmic modules.
The construction here is heavily influenced by results in previous sections  and especially \cite{FFHST}, were closely related operators were considered.

We shall use the same framework as in Section \ref{konstrukcija}.
Let $V$ be a vertex operator superalgebra. Let again $v \in V$
satisfies conditions (\ref{rel-c-1}) and (\ref{rel-c-2}).  Denote by
$\overline{V}$ any vertex subalgebra of $V$  contained in
the kernel of $v_0$.  If $M$ is a weak $V$-module, then the pair
$(\widetilde{Y}_{\widetilde{M}},\widetilde{M})$ defines a weak
$\overline{V}$-module.

  Since $M$ is an $V$--module, we have an intertwining operator, which we denote by
  $\mathcal{Y}$, of type
$${ M   \choose V  \ \  M }.$$

We are aiming for a logarithmic intertwining operator of type
$${ \widetilde{ M}  \choose V  \ \  \widetilde{M} }.$$

Recall
$$ \Delta(v,x) = x^{v_0} \exp \left( \sum_{n=1} ^{\infty}
\frac{v_n}{-n}(-x)^{-n} \right)$$ be as before. Observe (again!) that the
expression
$$\Delta(v,x)w, \ \ w \in V,$$
is ambiguous in general, but
$$\Delta(v,x)w, \ \ w \in \overline{V},$$
is well-defined. To fix the problem we introduce a new operator (cf.
\cite{FFHST} and \cite{M2} for closely related constructions): \be
\label{newdelta} \Delta_{log}(v,x)={\rm exp}\left({{\rm log}(x)
v_0}+  \sum_{n=1} ^{\infty} \frac{v_n}{-n}(-x)^{-n} \right), \ee
where as usual
$${\rm exp}({\rm log}(x)v_0)=\sum_{n = 0}^\infty  \frac{{\rm log}(x)^n v_0^n}{n!},$$
and ${\rm log}(x)$ is a formal variable satisfying $({\rm log}(x))'=\frac{1}{x}$.

Notice that now  $\Delta_{log}(v,x)w$ involves logarithmic terms for
$w \in V$, but if $w \in \overline{V}$ then \be \label{key-relation}
\Delta(v,x)w=\Delta_{log}(v,x)w. \ee

We first observe the following useful result:

\begin{lemma} \label{key} With $u \in \overline{V}$ and $v$ as above, then
$$\Delta_{log}(v,x_2)Y(u,y)=Y(\Delta(x_2+y)u,y)\Delta_{log}(v,x_2).$$
\end{lemma}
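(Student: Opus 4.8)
The plan is to establish the conjugation formula
$$\Delta_{log}(v,x_2)\,Y(u,y)\,\Delta_{log}(v,x_2)^{-1}=Y(\Delta(v,x_2+y)u,y)$$
by comparing the two factors appearing in $\Delta_{log}$ separately, since the operator $v_0$ (which produces the logarithmic term) and the positive modes $v_n$ ($n\ge 1$) play very different roles. First I would note that because $u\in\overline{V}\subseteq\mathrm{Ker}_V v_0$, the relation (\ref{key-relation}) lets me freely interchange $\Delta$ and $\Delta_{log}$ when they are applied to $u$; this is what makes the right-hand side $Y(\Delta(v,x_2+y)u,y)$ well-defined and unambiguous even though $v_0$ is not semisimple on all of $V$. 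The key structural input is the standard commutation relations coming from (\ref{rel-c-1}) and (\ref{rel-c-2}): since $v$ has conformal weight one and all the $v_n$ commute, we have the bracket formulas $[v_n,Y(u,y)]=\sum_{j\ge 0}\binom{n}{j}Y(v_j u,y)\,y^{n-j}$ for $n\ge 0$, and analogous relations for the negative modes.

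The main computation splits into two conjugations. First I would handle the exponential of positive modes: writing $P(x_2)=\exp\bigl(\sum_{n\ge 1}\tfrac{v_n}{-n}(-x_2)^{-n}\bigr)$, one conjugates $Y(u,y)$ by $P(x_2)$ and uses the commutators above together with the geometric-series identity $\sum_{n\ge 1}\tfrac{1}{n}\bigl(\tfrac{y}{x_2}\bigr)^{n}=-\log(1-y/x_2)$ and its derivatives to resum the result into the action of the matching factor of $\Delta(v,x_2+y)$ on $u$. Separately, I would conjugate by $\exp(\log(x_2)\,v_0)$; here the commutator $[v_0,Y(u,y)]=\sum_{j\ge 0}\binom{0}{j}Y(v_j u,y)y^{-j}=Y(v_0 u,y)$ collapses (only $j=0$ survives), and since $v_0 u$ is again controlled—indeed $v_0$ acts as $x^{v_0}\mapsto (x_2+y)^{v_0}$ after the shift—this conjugation reproduces the $x^{v_0}$-factor of $\Delta(v,x_2+y)$ evaluated at argument $x_2+y$. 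Assembling the two pieces and invoking (\ref{key-relation}) once more to pass from $\Delta_{log}$ back to $\Delta$ on the image $u$ yields the claimed identity.

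The delicate point is the bookkeeping in the $v_0$-conjugation: because $v_0$ does not act semisimply on $V$, the symbol $x^{v_0}$ is genuinely a power series in $\log(x)$, and one must verify that conjugating $Y(u,y)$ by $\exp(\log(x_2)v_0)$ produces exactly $(x_2+y)^{v_0}$ acting on $u$ (rather than an expression mixing $\log(x_2)$ and $\log(x_2+y)$ incoherently). The clean way to see this is to treat $v_0$ as a formal operator and use the fact that $e^{A}Be^{-A}=e^{\mathrm{ad}\,A}B$; applied with $A=\log(x_2)v_0$ and $B=Y(u,y)$, the adjoint action is nilpotent-friendly because each application of $\mathrm{ad}\,v_0$ lowers into $Y(v_0^k u,y)$, and the resummation $\exp(\log(x_2)\,\mathrm{ad}\,v_0)$ matches the Taylor expansion of $(x_2+y)^{v_0}=x_2^{v_0}(1+y/x_2)^{v_0}$ acting on $u$. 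I expect this to be the main obstacle, and the resolution is essentially the observation already emphasized in the excerpt: all ambiguity disappears precisely because everything is eventually applied to vectors in $\overline{V}$, on which the semisimplicity of $v_0$ (or its triviality) makes $\Delta$ well-defined, so the formal manipulations with $\log(x)$ are legitimate.
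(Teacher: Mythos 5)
Your overall route is the same as the paper's: factor $\Delta_{log}(v,x_2)=\exp(\log(x_2)v_0)\,E^{+}(-v,-x_2)$ using $[v_0,v_n]=0$, conjugate $Y(u,y)$ by each factor separately (the paper simply cites Lemma 2.15 of \cite{Li-sc} for the $E^{+}$-part, which is exactly your resummed-commutator computation, and a formula from \cite{M1} for the $\log$-part), and then use $u\in\overline{V}$ to dispose of the residual $v_0$-power factors. One intermediate claim in your write-up is not correct, however. Since $v$ is primary of weight one, the commutator is $[v_0,Y(u,y)]=Y(v_0u,y)$ exactly, with no $y$-dependence, so conjugation by $\exp(\log(x_2)v_0)$ yields $Y(\exp(\log(x_2)v_0)u,y)=Y(x_2^{v_0}u,y)$; the adjoint resummation $\exp(\log(x_2)\,\mathrm{ad}\,v_0)$ does \emph{not} produce $(x_2+y)^{v_0}=x_2^{v_0}(1+y/x_2)^{v_0}$, and the factor $(x_2+y)^{v_0}$ sitting inside $\Delta(v,x_2+y)$ is not generated by this conjugation at all. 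What actually happens is that after combining the two conjugations one is left with $Y\bigl(\exp(\log(x_2)v_0)\,x_2^{-v_0}\,(x_2+y)^{v_0}E^{+}(-v,-(x_2+y))u,\,y\bigr)$, and all three $v_0$-power prefactors act as the identity because $v_0$ commutes with every $v_n$ and annihilates $u\in\overline{V}$. So the lemma does follow, and by exactly the mechanism you invoke at the end, but the specific assertion that the $v_0$-conjugation ``reproduces the $(x_2+y)^{v_0}$-factor'' should be dropped: that factor is trivial on the relevant vectors rather than being produced by the conjugation.
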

\noindent \begin{proof} Because of $[v_0,v_m]=0$
we can write
$$\Delta_{log}(v,x)={\rm exp}({\rm log}(x)v_0)E^+(-v,-x),$$
where $E^+(v,x)$ is defined as usual.

As in Lemma 2.15 of \cite{Li-sc} we get
$$E^+(v,x)Y(u,y)E^+(-v,x)=Y((-x)^{v_0}\Delta(-v,-x+y)u,y).$$
As in \cite{M1}, we also have
$${\rm exp}({\rm log}(x)v_0)Y(u,y){\rm exp}(-{\rm log}(x)v_0)=Y({\rm exp}({\rm log}(x) v_0) u,y).$$
Combined together we get \bea && \Delta_{{\rm
log}}(v,x_2)Y(u,y)\Delta_{{\rm log}}(-v,x_2) \nn &&={\rm exp}({\rm
log}(x)v_0)E^+(-v,-x)Y(u,y)E^+(v,-x){\rm exp}(-{\rm log}(x)v_0)
\nn &&={\rm exp}({\rm log}(x)v_0) Y(x^{-v_0}\Delta(v,x+y)u,y){\rm
exp}(-{\rm log}(x)v_0) \nn &&=Y( {\rm exp}({\rm log}(x)v_0)
x^{-v_0} \Delta(v,x+y)u,y). \eea Finally, if we take $v \in
\overline{V}$, we obtain the wanted formula. \end{proof}

Here is the main result of this section

\begin{theorem} \label{log-int-1}Let $\mathcal{Y}$ be intertwining operator as above and $v$ as before such that $v_0$ acts nilpotently on $V$ . Then
$\widetilde{\mathcal{Y}}( \cdot,x):=\mathcal{Y}(\Delta_{log}(v,x)
\cdot,x)$ is a logarithmic intertwining operator of type
$${ \widetilde{ M}  \choose V  \ \  \widetilde{ M} }$$
in the category of $\overline{V}$--modules.
\end{theorem}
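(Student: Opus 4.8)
The goal is to show that $\widetilde{\mathcal{Y}}(\cdot,x)=\mathcal{Y}(\Delta_{\log}(v,x)\cdot,x)$ is a logarithmic intertwining operator of type $\binom{\widetilde{M}}{V\ \ \widetilde{M}}$ for $\overline{V}$. By definition this means verifying three things: (a) that $\widetilde{\mathcal{Y}}(w,x)$ genuinely lands in $\widetilde{M}\{x\}[\log x]$, so that only finitely many logarithmic powers actually appear; (b) the $L(-1)$-derivative property $\widetilde{\mathcal{Y}}(\widetilde{L(-1)}w,x)=\frac{d}{dx}\widetilde{\mathcal{Y}}(w,x)$; and (c) the Jacobi identity relating $\widetilde{\mathcal{Y}}$ to the $\overline{V}$-action $\widetilde{Y}_{\widetilde{M}}$. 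The whole strategy is to reduce each of these to the corresponding \emph{known} property of the ordinary intertwining operator $\mathcal{Y}$, using the conjugation formula of Lemma \ref{key} as the single algebraic engine.

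\medskip\noindent\textbf{Finiteness and the truncation condition.} First I would observe that the hypothesis that $v_0$ acts \emph{nilpotently} on $V$ is exactly what guarantees (a): since $\Delta_{\log}(v,x)=\exp(\log(x)\,v_0)E^+(-v,-x)$ and $v_0^{N}=0$ for some $N$ on $V$, the operator $\exp(\log(x)v_0)=\sum_{n=0}^{N-1}\frac{(\log x)^n}{n!}v_0^n$ is a \emph{polynomial} in $\log x$. Hence $\Delta_{\log}(v,x)w$ contains only finitely many powers of $\log x$ for each $w\in V$, and the composite $\mathcal{Y}(\Delta_{\log}(v,x)w,x)$ lies in $\widetilde{M}\{x\}[\log x]$ with the correct truncation from below in $x$ inherited from $\mathcal{Y}$. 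This is where I expect the main subtlety to live, so I would be careful here: one must confirm that applying the finitely many extra factors of $v_0$ and the lower-truncated $E^+$ does not destroy the lower-boundedness in the $x$-grading, which follows because $v_0$ and each $v_n$ ($n\ge 1$) are homogeneous operators of bounded weight.

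\medskip\noindent\textbf{The Jacobi identity via conjugation.} The core of the proof is the Jacobi identity, and here Lemma \ref{key} does the work. For $u\in\overline{V}$, Lemma \ref{key} gives $\Delta_{\log}(v,x_2)Y(u,y)=Y(\Delta(x_2+y)u,y)\Delta_{\log}(v,x_2)$, and this is precisely the intertwining relation needed to transport the ordinary Jacobi identity for the triple $(\mathcal{Y},Y_M,Y_V)$ into the deformed Jacobi identity for $(\widetilde{\mathcal{Y}},\widetilde{Y}_{\widetilde{M}},Y_{\overline{V}})$. Concretely, I would insert $\Delta_{\log}(v,x)$-factors into the three terms of the ordinary Jacobi identity, then use the conjugation formula together with the matching relation $\widetilde{Y}_{\widetilde{M}}(a,x)=Y_M(\Delta(v,x)a,x)$ from Theorem \ref{gen-const-log} to rewrite each term; the $\delta$-function arguments $x_0,x_1,x_2$ shift in exactly the compensating way so that the identity closes. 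The point is that the generating-function identity $\Delta(x_2+y)$ appearing in the lemma is the formal analytic continuation that makes the three deformation factors consistent across the three channels of the Jacobi identity.

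\medskip\noindent\textbf{The $L(-1)$-property.} Finally I would verify (b) by a direct computation. Since $\widetilde{L(-1)}=L(-1)+v_{-1}$ (from $\widetilde{L(n)}=L(n)+v_n$ as computed in the proof of Theorem \ref{main}), and since $\frac{d}{dx}\Delta_{\log}(v,x)=\big(\tfrac{1}{x}v_0+\sum_{n\ge1}v_n(-x)^{-n-1}(-1)\cdots\big)\Delta_{\log}(v,x)$, the derivative $\frac{d}{dx}\widetilde{\mathcal{Y}}(w,x)$ produces both the honest derivative of $\mathcal{Y}$ and the extra terms coming from differentiating $\Delta_{\log}$; matching these against $\widetilde{\mathcal{Y}}(\widetilde{L(-1)}w,x)$ is a finite formal manipulation using the $L(-1)$-property of $\mathcal{Y}$ itself. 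I expect the genuinely delicate point of the whole argument to be not any single one of these verifications in isolation, but rather confirming that the formal-variable bookkeeping in (a) and the Jacobi identity (b) is mutually consistent, i.e.\ that once we pass to $\log x$ the truncation and the $\delta$-function manipulations remain well-defined in $\widetilde{M}\{x\}[\log x]$; the nilpotency of $v_0$ is precisely the hypothesis that keeps all of this finite and legitimate.
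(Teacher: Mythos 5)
Your overall strategy coincides with the paper's: the Jacobi identity is transported from the ordinary one for $\mathcal{Y}$ by inserting the deformation factors (using $\widetilde{Y}(u,x_1)=Y(\Delta(v,x_1)u,x_1)$ on the algebra side and $\widetilde{\mathcal{Y}}(w,x_2)=\mathcal{Y}(\Delta_{log}(v,x_2)w,x_2)$ in the intertwiner) and then invoking the conjugation formula of Lemma \ref{key} in the iterate term to turn $Y(\Delta(v,x_2+x_0)u,x_0)\Delta_{log}(v,x_2)w$ back into $\Delta_{log}(v,x_2)Y(u,x_0)w$; the $L(-1)$-property is a direct commutator computation with $\Delta_{log}(v,x)$. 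Your opening observation that the nilpotency of $v_0$ is precisely what keeps ${\rm exp}({\rm log}(x)v_0)$ polynomial in ${\rm log}(x)$, so that $\widetilde{\mathcal{Y}}$ lands in $\widetilde{M}\{x\}[{\rm log}(x)]$, is a point the paper leaves implicit, and it is the correct reading of that hypothesis.

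One step as written would fail: in your $L(-1)$ verification you take the operator on the first tensor factor to be $\widetilde{L(-1)}=L(-1)+v_{-1}$. But the first slot of the intertwining operator is $V$ with its original module structure (the type is ${\widetilde{M} \choose V \ \ \widetilde{M}}$, not ${\widetilde{M} \choose \widetilde{V} \ \ \widetilde{M}}$), so the identity to prove is $\widetilde{\mathcal{Y}}(L(-1)u,x)=\tfrac{d}{dx}\widetilde{\mathcal{Y}}(u,x)$ with the undeformed $L(-1)$. From $[L(-1),v_0]=0$ and $[L(-1),v_n]=-nv_{n-1}$ one gets $[L(-1),\Delta_{log}(v,x)]=-\tfrac{d}{dx}\Delta_{log}(v,x)$, which together with the $L(-1)$-property of $\mathcal{Y}$ yields exactly $\mathcal{Y}(\Delta_{log}(v,x)L(-1)u,x)=\tfrac{d}{dx}\mathcal{Y}(\Delta_{log}(v,x)u,x)$; this is the paper's computation. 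If you instead insert $\widetilde{L(-1)}$, the two sides differ by the generally nonzero term $\mathcal{Y}(\Delta_{log}(v,x)v_{-1}u,x)$, so the claimed matching cannot close. Once this operator is corrected, the rest of your argument goes through and agrees with the paper.
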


\noindent \begin{proof} The main idea is already in \cite{Li-sc}.
Because of
$$[L(-1),v_0]=0, \ \ [L(-1),v_n]=-nv_{n-1},$$
we have
$$[L(-1), \Delta_{log}(u,x)]=-\frac{d}{dx}(\Delta_{log}(v,x)),$$
which implies
$$[L(-1),\tilde{\mathcal{Y}}(u,x)]=\frac{d}{dx}\tilde{\mathcal{Y}}(u,x),$$
the $L(-1)$-derivative property.

For the Jacobi identity, we start from the ordinary Jacobi identity
for the intertwining operator $\mathcal{Y}(u,x)$, where $u \in V $:

$$x_0^{-1} \delta \left( \frac{x_1-x_2}{x_0} \right)  \tilde{Y}(u,x_1)  \tilde{\mathcal{Y}}(w,x_2)-x_0^{-1} \delta \left( \frac{-x_2+x_1}{x_0} \right) \tilde{\mathcal{Y}}(w,x_2) \tilde{Y}(u,x_1)$$
$$=x_0^{-1} \delta \left( \frac{x_1-x_2}{x_0} \right)  {Y}(\Delta(v,x_1)u,x_1) {\mathcal{Y}}(\Delta_{log}(v,x_2)w,x_2)$$
$$-x_0^{-1} \delta \left( \frac{-x_2+x_1}{x_0} \right) {\mathcal{Y}}( \Delta_{log}(v,x_2)w,x_2) {Y}(\Delta(v,x_1)u,x_1)$$

$$=x_2^{-1} \delta \left(\frac{x_1-x_0}{x_2} \right) \mathcal{Y}(Y(\Delta(v,x_1)u,x_0) \Delta_{log}(v,x_2)w,x_2)$$
$$=x_2^{-1} \delta \left(\frac{x_1-x_0}{x_2} \right) \mathcal{Y}(Y(\Delta(v,x_2+x_0)u,x_0) \Delta_{log}(v,x_2)w,x_2).$$

Finally, we apply Lemma \ref{key} so the right hand-side becomes

$$x_2^{-1} \delta \left(\frac{x_1-x_0}{x_2} \right) \mathcal{Y}(\Delta_{log}(v,x_2) Y(u,x_0)w,x_2),$$
which is what we needed to prove.

\end{proof}

By using similar proof to that of   Theorem \ref{log-int-1} we have

\begin{theorem} \label{log-int-2} Assume that $M_1$, $M_2$ and $M_3$ are $V$--modules and  that  $v \in V$ satisfies conditions
(\ref{rel-c-1}) and (\ref{rel-c-2}) and $v_0$ acts nilpotentlly on $M_i$, $i=1,2,3$. Let $ \mathcal{Y}$ be an
intertwining operator of type ${ M_3 \choose M_1 \ \ M_2 }$ in the
category of $V$--modules. Then  $\widetilde{\mathcal{Y}}(
\cdot,x):=\mathcal{Y}(\Delta_{log}(v,x) \cdot,x)$ is a logarithmic
intertwining operator of type
$${ \widetilde{ M_3}  \choose M_1 \ \  \widetilde{ M_2} }$$
in the category of $\overline{V}$--modules.
\end{theorem}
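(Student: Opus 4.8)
The plan is to imitate, step by step, the proof of Theorem \ref{log-int-1}; the only structural difference is that the acting slot now carries the genuine module $M_1$ instead of $V$ and that $\mathcal{Y}$ is an honest intertwining operator rather than the module map. As there, I will verify two things for $\widetilde{\mathcal{Y}}$ in the category of $\overline{V}$-modules: the $L(-1)$-derivative property and the Jacobi identity. The hypothesis that $v_0$ act nilpotently on each $M_i$ guarantees that $\Delta_{log}(v,x)$ contributes only finitely many powers of ${\rm log}(x)$, so that $\widetilde{\mathcal{Y}}(\cdot,x)$ really lands in $\widetilde{M_3}\{x\}[{\rm log}(x)]$, and that $\widetilde{M_2}$, $\widetilde{M_3}$ are genuine $\overline{V}$-modules via Theorem \ref{gen-const-log}, with $\widetilde{Y}_{\widetilde{M_i}}(a,x)=Y_{M_i}(\Delta(v,x)a,x)$ for $a\in\overline{V}$. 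I stress that $M_1$ sits in the first slot \emph{undeformed}, so that its Virasoro action, in particular $L(-1)$, remains the original one.

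The first task is a module version of Lemma \ref{key}: for $a\in\overline{V}$, as operators on $M_1$, one should have
$$\Delta_{log}(v,x_2)\,Y_{M_1}(a,y)=Y_{M_1}(\Delta(v,x_2+y)a,y)\,\Delta_{log}(v,x_2).$$
I will prove this exactly as Lemma \ref{key}: factor $\Delta_{log}(v,x)={\rm exp}({\rm log}(x)v_0)E^+(-v,-x)$ using $[v_0,v_m]=0$, and then apply the module analogues of the two conjugation identities used there, namely $E^+(v,x)Y_{M_1}(a,y)E^+(-v,x)=Y_{M_1}((-x)^{v_0}\Delta(-v,-x+y)a,y)$ from \cite{Li-sc} and ${\rm exp}({\rm log}(x)v_0)Y_{M_1}(a,y){\rm exp}(-{\rm log}(x)v_0)=Y_{M_1}({\rm exp}({\rm log}(x)v_0)a,y)$ from \cite{M1}. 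Both hold on an arbitrary $V$-module; since $a\in\overline{V}\subseteq{\rm Ker}\,v_0$, the right-hand side is free of logarithms (cf. (\ref{key-relation})), so $\Delta(v,x_2+y)a$ is unambiguous.

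Granting this, the Jacobi identity is obtained verbatim as in Theorem \ref{log-int-1}. Into the Jacobi identity to be established for $\widetilde{\mathcal{Y}}$ (with $a\in\overline{V}$, $w\in M_1$) I substitute $\widetilde{Y}_{\widetilde{M_3}}(a,x_1)=Y_{M_3}(\Delta(v,x_1)a,x_1)$, $\widetilde{Y}_{\widetilde{M_2}}(a,x_1)=Y_{M_2}(\Delta(v,x_1)a,x_1)$ and $\widetilde{\mathcal{Y}}(w,x_2)=\mathcal{Y}(\Delta_{log}(v,x_2)w,x_2)$, and then invoke the \emph{ordinary} Jacobi identity for $\mathcal{Y}$ of type ${M_3\choose M_1\ \ M_2}$ with the element $\Delta(v,x_1)a\in V$ in the acting slot. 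This collapses the left-hand side to
$$x_2^{-1}\delta\!\left(\tfrac{x_1-x_0}{x_2}\right)\mathcal{Y}\!\left(Y_{M_1}(\Delta(v,x_2+x_0)a,x_0)\,\Delta_{log}(v,x_2)w,\,x_2\right),$$
where the delta-function has already been used to replace $x_1$ by $x_2+x_0$. Applying the module version of Lemma \ref{key} with $y=x_0$ rewrites $Y_{M_1}(\Delta(v,x_2+x_0)a,x_0)\,\Delta_{log}(v,x_2)$ as $\Delta_{log}(v,x_2)\,Y_{M_1}(a,x_0)$, and the expression becomes $x_2^{-1}\delta\!\left(\tfrac{x_1-x_0}{x_2}\right)\widetilde{\mathcal{Y}}(Y_{M_1}(a,x_0)w,x_2)$, which is the required right-hand side.

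The $L(-1)$-derivative property will follow from $[L(-1),\Delta_{log}(v,x)]=-\frac{d}{dx}\Delta_{log}(v,x)$, itself a consequence of $[L(-1),v_0]=0$ and $[L(-1),v_n]=-nv_{n-1}$; combined with the $L(-1)$-derivative property of $\mathcal{Y}$ and the fact that $L(-1)$ on $M_1$ is undeformed, a one-line computation yields $\frac{d}{dx}\widetilde{\mathcal{Y}}(w,x)=\widetilde{\mathcal{Y}}(L(-1)w,x)$. The main obstacle is the module version of Lemma \ref{key}: one must take care to apply the conjugation formulas of \cite{Li-sc} and \cite{M1} to the action $Y_{M_1}$ on the first module $M_1$ (not on $M_2$ or $M_3$), and to use $a\in\overline{V}$ precisely to discard the logarithmic terms; once this is in place, everything else is a direct transcription of the proof of Theorem \ref{log-int-1}.
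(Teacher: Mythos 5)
Your proposal is correct and follows exactly the route the paper intends: the paper gives no separate argument for Theorem \ref{log-int-2}, stating only that it follows ``by using similar proof to that of Theorem \ref{log-int-1},'' and your write-up is precisely that adaptation, with the one genuinely new ingredient (the module version of Lemma \ref{key} on $M_1$, valid because the conjugation identities of \cite{Li-sc} and \cite{M1} hold on arbitrary $V$--modules and $a\in\overline{V}$ kills the logarithmic terms) correctly identified and justified. Your additional observations---that $M_1$ remains undeformed in the first slot and that nilpotency of $v_0$ ensures only finitely many powers of $\log(x)$ appear---are accurate and consistent with the paper's setup.
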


\noindent {\large {\bf Acknowledgments}} \\ D.A. would like to thank
the Erwin Schr$\ddot{\mbox{o}}$dinger Institute in Vienna for
hospitality. We also thank anonymous referee for his/her
constructive comments.   The second author was partially supported
by NSF grant DMS-0802962.

\end{document}